\documentclass{amsart}
\usepackage[utf8x]{inputenc}
\usepackage{amsmath, amsthm, amssymb}
\usepackage[usenames,dvipsnames,svgnames,table]{xcolor}
\usepackage[margin=1in]{geometry}
\usepackage{enumerate}
\usepackage{dsfont}
\numberwithin{equation}{section}
\usepackage[colorlinks,linkcolor = blue,citecolor = ForestGreen]{hyperref}
\usepackage{zref-clever}
	\newcommand{\Cref}{\zcref[S]}
\usepackage{cite}
\usepackage{mathtools}
\usepackage[normalem]{ulem}
\usepackage{comment}
\usepackage{xfrac}
\usepackage{xcolor}
\usepackage{stmaryrd}
\usepackage{cancel}
\usepackage{esint}
\usepackage{thmtools}
\usepackage{bm}

\usepackage{soul}

\usepackage{stmaryrd}

\usepackage[title]{appendix}

\newtheorem{theorem}{Theorem}[section]
\newtheorem{proposition}[theorem]{Proposition}
\newtheorem{lemma}[theorem]{Lemma}
\newtheorem{definition}[theorem]{Definition}

\newtheorem{corollary}[theorem]{Corollary}

\newcommand{\R}{\mathbb R}

\newcommand{\eps}{\varepsilon}

\newcommand{\1}{\mathds{1}}

\renewcommand{\epsilon}{\eps}

\newcommand{\be}{\begin{equation}}
\newcommand{\ee}{\end{equation}}

\newcommand{\norm}[1]{\left\lVert#1\right\rVert}

\title[Convergence rates for FKPP with nonlinear advection]{Convergence rates to traveling waves for an FKPP model with nonlinear advection}
\author{Ryan Patterson}

\address[Ryan Patterson]
    {Department of Mathematics, The University of Maryland, 
    College Park, MD 20742, USA}
    \email{ryankp@umd.edu}

\begin{document}

    \begin{abstract}
	We establish convergence rates of solutions to traveling waves for a reaction-diffusion model with nonlinear advection. The convergence rates are proven using the shape defect function. We introduce a
    novel energy methods-based approach that uses a weighted Nash inequality to obtain $L^\infty$-estimates on the shape defect function.
    In determining the convergence rates, we also establish the front location for the model in a way similar to \cite{giletti2022monostablepulledfrontslogarithmic,shortproof}.
    \end{abstract}
    \maketitle

    \section{Introduction}

    \subsection{The model and main question}
    We consider the long time behavior for solutions to the reaction-diffusion equation with nonlinear advection given by
    \begin{equation}\label{main_eq}
        \begin{cases}
            u_t + \chi(A(u))_x = u_{xx} + u-A(u) \quad
        		& \text{ in } (0,\infty)\times \R,\\
                u(0, x) = u_0(x) &\text{ in } \R,
        \end{cases}
    \end{equation}
    where $\chi \in [0, 1]$ and $A \in C^2([0, 1])$.
    We make the assumptions that
    \begin{equation}\label{A_assumptions}
    	A(0) = 0 = A'(0), \quad A(1) = 1, \quad \text{and} \quad \frac{A(u)}{u} \text{ is increasing and convex}.
	\end{equation}
    A prototypical example is the choice $A(u) = u^2$, in which case we recover the Fisher-KPP equation for $\chi = 0$ and the Burgers-FKPP equation for $\chi >0$; see \cite[Chapter 13.4]{murray} and \cite{Leach_Hanac,an2023pushedpulledpushmipullyufronts} for more information
    on the Burgers-FKPP equation.

    For the initial data, we always assume $u_0 \in [0, 1]$ and $u_0  \not\equiv 0$. 
    Note that $u \equiv 1$ and $u \equiv 0$ are steady state solutions to \eqref{main_eq}.
    In addition, $u \equiv 1$ is a stable equilibrium while $u \equiv 0$ is an unstable
    equilibrium.  One interpretation of~\eqref{main_eq} is that of the population density of a species invading a previously unexplored space.
    We also make
    the following standard assumption: there exists $L_0 \in \R$, so that
    \begin{equation}
        \lim_{x \to - \infty} u_0(x) =1
        \qquad \text{ and } \qquad
        u_0(x) = 0 \quad\text{ if } x \geq L_0. \label{initial_data_assumption}
    \end{equation}
    A reasonable initial data to keep in mind that satisfies \eqref{initial_data_assumption} is    \[u_0 = \1_{\{x \leq 0\}}.\]

    The study of solutions to reaction-diffusion equations similar to \eqref{main_eq} dates back to 1937~\cite{KPP,Fisher}, in which the authors considered the equation
    \begin{equation}\label{FKPP}
        u_t = u_{xx} + f(u)
        	\qquad\text{ in } (0,\infty)\times \R. 
    \end{equation}
    One such admissible nonlinearity that serves as a useful example is $f(u) = u - u^2$.
    These works established the existence of traveling wave solutions, which are solutions of~\eqref{FKPP}
    that take the form
    \begin{equation} \label{tw_assumptions}
        u(t, x) = U_c(x-ct)
		\qquad\text{with}\quad
		U_c(-\infty) = 1, \quad U_c(+\infty) = 0, \quad\text{and}\quad U_c \in (0, 1) \text{ for all } x \in \R.
    \end{equation}
    Observe that traveling waves must satisfy the ODE
    \begin{equation}
        -cU_c' = U_c'' + f(U_c)
    \qquad\text{ in } \R. \label{intro_FKPP_TW_eqn}
    \end{equation}
    It turns out that that there is a minimal speed $c_*>0$ such that
    traveling wave solutions exist if and only if $c \geq c_*$.  In the case of $f(u) = u - u^2$, $c_* = 2$~\cite{KPP,Fisher}.

    In~\cite{KPP}, it was additionally proven that
    solutions to \eqref{FKPP} with sufficiently rapidly decaying initial data, $u_0$, converge in shape
    to $U_{c_*}$. This means that there exists a reference frame $m(t)$, which we refer to as the front location, so that
    \begin{equation}
        |u(t, x + m(t)) - U_{c_*}(x)| \to 0 \quad \text{as } t \to +\infty, \text{ uniformly in } x\in \R. \label{kpp_result}
    \end{equation}
    Convergence in shape was originally proven by a compactness-style argument. As a result, it is not quantitative.

    Our model~\eqref{main_eq} with advection also has traveling waves
    satisfying \eqref{tw_assumptions}, however, the ODE which they satisfy is
    \begin{equation}
        -cU_c' + \chi A'(U_c)U_c'= U_c'' + U_c - A(U_c)
    \qquad\text{ in } \R. \label{intro_TW_eqn}
    \end{equation}
    By~\cite{hadeler_rothe}, there exists a minimal speed $c_* > 0$ so that
    \eqref{main_eq} has a unique (up to translation) traveling wave solution if and only if $c \geq c_*$.
    The authors of~\cite{hadeler_rothe} additionally show that the minimal speed is $c_* = 2$ for all $\chi \in [0, 1]$. We denote 
    \[
    		U_* := U_{c_*} = U_2.
	\] 
    Convergence results for \eqref{main_eq} are unknown except in
    the special case $\chi = 0$ when~\eqref{main_eq} simplifies to the more well-studied form~\eqref{FKPP}. Therefore,
    the goal of this paper is to prove and quantify convergence of $u$ to $U_*$.  We state our main theorem, up to defining ``steep initial data,'' which we do in \Cref{d.steepness}.

    \begin{theorem}\label{main_result_theorem}
        Fix $\chi \in[0,1]$ and $A$ satisfying~\eqref{A_assumptions}.  Let $u$ be a solution to~\eqref{main_eq} with ``steep initial data'' $u_0$ 
        for which \eqref{initial_data_assumption} holds.
        There is a reference frame $m= m_\chi$ such that
        \begin{equation}
            \norm{u(t, \cdot + m(t)) - U_*}_{L^\infty} \leq \frac{C}{t}. \label{main_result}
        \end{equation}
    \end{theorem}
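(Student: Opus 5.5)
The plan is to work with the shape defect function. The reference frame $m(t)$ will be defined implicitly through a level set or normalization; for instance, $m(t)$ is the point where $u(t,m(t)) = U_*(0)$, which is well defined once $u$ is monotone in $x$. Steep initial data should guarantee monotonicity, because $u_0$ steep together with the comparison principle applied to $u_x$ keeps the solution decreasing. Since $U_*$ is strictly decreasing, the profile $u(t,\cdot)$ can be written as $u(t,x) = U_*(\phi(t,x))$ for some function $\phi$. The shape defect function is then
\[
w(t,x) := U_*'(U_*^{-1}(u)) + u_x, \quad\text{or equivalently}\quad w = -u_x - \Psi(u),
\]
where $\Psi(u) := -U_*'(U_*^{-1}(u))$ is the phase-plane graph of the minimal wave. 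One has $w \equiv 0$ exactly when $u$ is a translate of $U_*$. Differentiating $w$ along \eqref{main_eq} and using \eqref{intro_TW_eqn}, with $c_*=2$, to eliminate $\Psi''$ should yield a linear parabolic equation of the form
\[
w_t + b(t,x)\, w_x = w_{xx} + a(t,x)\, w.
\]
Here $a$ and $b$ depend on $u$, $\Psi$ and $\chi A'(u)$. The convexity and monotonicity assumptions on $A(u)/u$ in \eqref{A_assumptions} should make $a$ essentially nonpositive, apart from a term that comes from the exponential behavior $U_*(x)\sim x e^{-x}$ near $0$. Steep initial data presumably means $w(0,\cdot)$ has a sign and suitable decay, and that sign is then preserved by the maximum principle.

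The next step is to remove the linearization at $0$. In the moving frame $x\mapsto x-2t$, I would conjugate with the weight $e^{x}$, or more precisely with $U_*$ itself or $U_*/x$. This should give an equation for $v = e^{\cdot}w$, or for $v = w/\Psi$, that is a perturbation of the heat equation with a nonpositive potential and drift. The $1/t$ rate then has to come from two sources. First, the front location $m(t) = 2t - \tfrac32\log t + O(1)$ must be established; I would follow \cite{giletti2022monostablepulledfrontslogarithmic,shortproof}, building sub- and super-solutions from the Dirichlet heat kernel on the half line near $x = 2t - \tfrac32 \log t$. Second, $v$ must decay. The Dirichlet-type behavior forced by the weight gives the half-line heat decay $t^{-3/2}$ in $L^\infty$ against weighted $L^1$ mass. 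Combined with the weight, this should give $|w|\lesssim t^{-1}$ in the region of bounded distance from the front, and faster decay elsewhere.

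To obtain the $L^\infty$ bound on $v$, I would use an energy method rather than explicit kernels, since the coefficients depend on $u$. The steps are as follows. First, show that a weighted $L^1$ norm $\int v\,\rho\,dx$ is nonincreasing, for some weight $\rho$ (like $x_+$) adapted to the half-line structure. Second, compute $\frac{d}{dt}\int v^2$ and control it by $-\int v_x^2$. Third, apply a weighted Nash inequality of the form
\[
\|v\|_2^{2+4/3} \le C \|v_x\|_2^2 \,\Big(\int |v|\rho\Big)^{4/3}.
\]
This gives the ODE inequality $\frac{d}{dt}\|v\|_2^2 \le -c\|v\|_2^{2+4/3}$, hence $\|v\|_2 \lesssim t^{-3/4}$. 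Fourth, use the standard duality and iteration from $L^1$ to $L^2$ to $L^\infty$ to get $\|v\|_\infty\lesssim t^{-3/2}$. Unwinding the weight, which is of size $\sim x$ times $e^{-x}$ near the shifted front and is $O(\log t)$ off by the correction, and integrating $w = -u_x-\Psi(u)$ along $x$ from the normalization point, gives $|u(t,\cdot+m(t))-U_*|\le C/t$ uniformly.

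The main obstacle is the second and third of these steps. The coefficients $a$ and $b$ are not exactly those of the linearized FKPP operator: the advection term $\chi A'(u)$ enters the drift, and the potential has lower-order errors. To absorb these into the dissipation, I will need a Hardy-type inequality on the half line together with the sign information coming from \eqref{A_assumptions}. I would first try to show that the error terms are bounded by $C e^{-x}$ near the front and are therefore integrable in time against $t^{-3/2}$. If this fails, I would fall back on a bootstrap using the a priori front estimate.
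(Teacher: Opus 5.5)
Your architecture matches the paper's: the shape defect function, an $L^2$ energy estimate closed by a (weighted) Nash inequality, and a duality step that needs a separate energy estimate for the formal adjoint. As stated, however, the plan does not deliver the $1/t$ rate.

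First, you push every $\chi\in[0,1]$ through the pulled picture: front at $2t-\tfrac32\log t$, tail $U_*\sim xe^{-x}$, and Dirichlet heat-kernel barriers. For $\chi=1$ all three are wrong.
\begin{itemize}
\item There $\eta(u)=u-A(u)$ exactly, so $U_*\sim\theta e^{-x}$.
\item The equation for $e^{x}u(t,x+2t)$ is in divergence form, so its mass is conserved rather than Dirichlet-damped.
\item The front sits at $2t-\tfrac12\log t+O(1)$.
\end{itemize}
Your barrier construction would therefore be aiming at a false front location. The time normalization ($t^{-1/2}$ versus $t^{-3/2}$), the direct $L^1$ bound and the choice of weight all depend on which case you are in. Note that the exponents you actually write down (linear weight, $t^{-3/4}$ in $L^2$, $t^{-3/2}$ in $L^\infty_{1/\rho}$) are exactly the ones that work when $\chi=1$.

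More seriously, for $\chi<1$ the mechanism you describe yields at best $t^{-1/2}$. The half-line Dirichlet decay $t^{-3/2}$ is precisely what the $-\tfrac32\log t$ shift consumes. In the frame $m(t)$, the conjugated shape defect $\bar w=h\tilde w$ (with $h$ constant behind the front and $e^x$ ahead) acquires a growth term $\tfrac{3}{2t}\bar w$, so one must work with $\hat w=t^{-3/2}\bar w$. Its $L^1$ norm is $O(1/t)$, computed directly from $w=-u_x-\eta(u)$ by integrating by parts and using upper bounds on $\tilde u$. Hence $\norm{\hat w(t)}_{L^2}\lesssim t^{-5/4}$. A linear adjoint weight adds only $t^{-3/4}$, and undoing the normalization gives $\tilde w\lesssim t^{-1/2}(1+x)e^{-x}$. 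This loss is intrinsic to a linear weight: heuristically $e^{x}\tilde w\approx x^2t^{-1}e^{-x^2/(Ct)}$, whose ratio to $1+x$ is of order $t^{-1/2}$ at $x\sim\sqrt t$.

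The missing idea concerns the term $2w(w+\eta(u))(\eta''(u)+\chi A''(u))$ in the $w$-equation. It is neither a harmless nonpositive potential nor an error to absorb with a Hardy inequality. For small $u$ it acts like $-2w/x^2$, a critical potential whose positive harmonic function is $x^2$ rather than $x$, and it is what upgrades the Dirichlet rate $t^{-3/2}$ to $t^{-5/2}$. Exploiting it requires three ingredients:
\begin{enumerate}
\item[(i)] the concavity $\eta''+\chi A''\le0$ on $(0,1)$, which does not follow directly from \eqref{A_assumptions} and needs its own ODE argument;
\item[(ii)] a second-order expansion of $\eta'$ near $0$, combined with a lower bound $\tilde u\gtrsim xe^{-x}e^{-x^2/(4t)}$, giving $2\eta(\tilde u)(\eta''+\chi A'')(\tilde u)\le -\tfrac{2}{x^2}+\tfrac{1}{xt}+\tfrac{C}{x^3}$ for large $x$;
\item[(iii)] the quadratic adjoint weight $g=x^2+Mx+M$.
\end{enumerate}
With this weight, the term $\int v g''=2\int v$ is cancelled by $-\tfrac{2}{x^2}g$. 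The $\tfrac{1}{xt}$ error is cancelled by the drift contribution $-\tfrac{3}{2t}g'$. The region of bounded $x$ is handled by taking $M$ large against $-\int vA'(\tilde u)((1-\chi)g+\chi g')$. Only then does the $\beta=2$ weighted Nash inequality give $\norm{v(t)}_{L^2}\lesssim t^{-5/4}\norm{v_0}_{L^1_g}$ for the adjoint. This yields $h\tilde w\lesssim g/t$ and, via the ODE for $u(t,\cdot+m(t))-U_*$, the $1/t$ rate.
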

    In proving \Cref{main_result_theorem}, we also find the asymptotics of the front location $m_\chi(t)$ up to $O(1)$ precision, which is 
    established in \Cref{front_location_theorem}. 
    The main ingredient in the proof of \Cref{main_result_theorem} is \Cref{shape_defect_theorem}.  In order to present that result, however, we must develop some notation.  For this reason, we postpone its statement, but alert the reader that the main contributions
    of this paper are discussed following \Cref{shape_defect_theorem}.

    Algebraic convergence rates similar to \eqref{main_result} seem to be quite rare in the reaction-diffusion
    literature. We mention the results of Graham ~\cite{graham} and An, Henderson, and Ryzhik ~\cite{an2023locationdeterminesconvergencerate}.
    Each paper proves $O(1/t)$ convergence rates for different nonlinearities of \eqref{FKPP}.
    More exposition is given about these results following \Cref{shape_defect_theorem}.
    In addition, there recently have been results that
    achieve algebraic convergence rates in settings outside the scope that we consider. For example,
    if one considers initial data that does not satisfy \eqref{initial_data_assumption}, then
    faster than $O(1/t)$ convergence rates can be obtained. This was shown by Avery and Scheel in ~\cite{avery_scheel}.

    To see why an algebraic convergence rate such as  \eqref{main_result} might
    be expected, consider the linearized problem associated with \eqref{main_eq}:
    \begin{equation}
        v_t = v_{xx} + v. \label{intro_linearized}
    \end{equation}
    The spreading speed for \eqref{intro_linearized} is $c_{\text{lin}}=2$. Therefore,
    the minimal speed for \eqref{main_eq} and the linearized spreading speed are the same. 
    This phenomenon is called ``pulled'' front propagation, and
    intuitively means that the linearized problem is a good approximation to \eqref{main_eq}.
    Thus, the portion that ``matters'' is when $u$ is small, or, equivalently, as $x \to +\infty$. Therefore, the convergence is
    determined over an unbounded set, which results in algebraic decay rates.
    Compare, for example, the heat equation on a compact set versus over $\R$. The first converges exponentially
    while the latter converges like $t^{-1/2}$.

    \subsection{The effect of $\chi$} \label{intro_front_location}
    Understanding how $\chi$ affects solutions of \eqref{main_eq} is
    an important piece in proving convergence. In particular, the behavior in the
    case $\chi \in [0, 1)$ is quite different than when $\chi = 1$.
    The first example of such a difference is the
    following result on the front location.
    \begin{proposition}\label{front_location_theorem}
        Suppose that $u$ solves \eqref{main_eq} with the initial data assumptions given in \eqref{initial_data_assumption}.
        \begin{enumerate}[(i)]
            \item\label{i.pulled_front}
            If $0 \leq \chi < 1$, then
            \begin{equation}
                m(t) = 2t - \frac{3}{2}\log t + O(1). \label{pulled_front}
            \end{equation}
            \item\label{i.pushmi-pullyu_front}
            	 If $\chi = 1$, then
            \begin{equation}
                m(t) = 2t - \frac{1}{2}\log t + O(1).\label{pushmi_pullyu_front}
            \end{equation}
        \end{enumerate}
    \end{proposition}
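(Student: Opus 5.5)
We will follow the approach of \cite{giletti2022monostablepulledfrontslogarithmic,shortproof}: pass to the moving frame $x \mapsto x - 2t + r\log t$, with $r = 3/2$ if $\chi<1$ and $r=1/2$ if $\chi=1$, and remove the exponential factor. Concretely, we set
\[
u(t,x+2t - r\log(t+1)) = e^{-x} w(t,x),
\]
so that the linear part of \eqref{main_eq} becomes the heat equation $w_t = w_{xx} + \frac{r}{t+1}(w_x - w)$ plus a nonlinear remainder. In this remainder the reaction contributes $-e^{x}A(e^{-x}w)$. The advection contributes $-\chi e^{x}(A(e^{-x}w))_x$, which after the exponential conjugation equals $\chi e^{x} A(e^{-x}w) - \chi A'(e^{-x}w) w_x$ up to sign bookkeeping. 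The structural point is that these two combine into
\[
-(1-\chi)e^{x}A(e^{-x}w) - \chi A'(u)\,w_x .
\]
For $\chi<1$ this is a genuine absorption term, and the problem looks like Fisher-KPP: the effective boundary condition far behind the front is of Dirichlet type. For $\chi=1$ the zeroth-order absorption cancels exactly and only a transport term survives, which behaves like a Neumann/Robin condition. This difference is the source of the two exponents. The Dirichlet heat kernel on a half line decays like $t^{-3/2}$, while the Neumann one decays like $t^{-1/2}$. With the matching choice of $r$, the drift term $\frac{r}{t}w$ exactly compensates this decay, so $w$ should stay of order $\sqrt{t}$ on the diffusive scale, with $w \asymp x$ for $\chi<1$ and $w \asymp 1$ for $\chi=1$ when $x \sim 1$.

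The key steps are the following.
\begin{enumerate}[(1)]
\item \textbf{Upper barrier.} Build a supersolution in the variable $w$ on $x \ge 0$. For $\chi<1$ take the self-similar Dirichlet profile $\bar w = C(x+1)\,\phi\!\bigl(x/\sqrt{t+1}\bigr)$, with $\phi$ solving the associated ODE with a small correction. For $\chi=1$ take a Neumann-type profile $\bar w = C\,\psi\!\bigl(x/\sqrt{t+1}\bigr)$. Pair it with $u \le 1$ (or a traveling-wave comparison) on $x \le 0$. Comparison then gives $u(t,x+2t-r\log t)\le C e^{-x}$ for $x\ge 0$, which is the upper bound $m(t)\le 2t-r\log t+O(1)$.
\item \textbf{Lower barrier.} Work on a moving half line $x \ge L$ with a subsolution $\underline w$ vanishing at $x=L$. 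Use $A(u)\le Cu^2$ (from $A(0)=A'(0)=0$) to show that the nonlinear remainder is $O(e^{-x}w^2)$, hence integrable against the barrier. This yields $w \gtrsim 1$ near $x\sim L$, i.e.\ $m(t)\ge 2t-r\log t-O(1)$.
\item \textbf{Monotonicity of $u$.} Establish that $u$ is monotone in $x$; this holds for \eqref{initial_data_assumption}-type data with the sign structure of $A$. Use it to make the front location well-defined (a level set), and combine with steps (1) and (2).
\end{enumerate}

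The main obstacle is the case $\chi=1$. There the absorption vanishes, so there is no Dirichlet mechanism, and the barrier must instead exploit the transport term $-A'(u)w_x$ together with the exact cancellation. We must check that this term has a favorable sign for the supersolution, namely $\bar w_x \le 0$, so that the term helps. The lower bound is the delicate part, because the term has the wrong sign there. To handle it, we would first try bounding $A'(u)\le C e^{-x}w$ by the convexity assumption in \eqref{A_assumptions}, and absorb the resulting term as an exponentially small perturbation in $x$. Compactly supported data is then used to start the barriers, after a finite waiting time, from comparison with the Dirichlet problem on a bounded interval.
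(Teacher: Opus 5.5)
Your Dirichlet-versus-Neumann heuristic is the paper's own, but the barrier argument built on it has a genuine gap, and it starts with the algebra of the remainder. Doing your substitution $\tilde u=e^{-x}w$ in the frame $2t-r\log(t+1)$ correctly gives
\[
w_t=w_{xx}+\frac{r}{t+1}(w-w_x)-(1-\chi)e^{x}A(\tilde u)-\chi A'(\tilde u)w_x+\chi e^{x}\bigl(\tilde uA'(\tilde u)-A(\tilde u)\bigr).
\]
Besides the sign slip in the drift, you dropped the last term. It is nonnegative because $A(u)/u$ is increasing. The slip is that you identified $\chi A'(\tilde u)w=\chi e^{x}\tilde uA'(\tilde u)$ with $\chi e^{x}A(\tilde u)$, which holds only for linear $A$.

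For $A(u)=u^2$ the zeroth-order coefficient is $(2\chi-1)e^{-x}w^2$. This is a source for $\chi\in(\frac12,1]$, and nothing cancels at $\chi=1$. The cancellation exists only in divergence form, $-\chi(e^{x}A(\tilde u))_x-(1-\chi)e^{x}A(\tilde u)$, which gives mass conservation but no sign. Put differently, since $u_x\le 0$ the advection acts as an extra reaction $\chi A'(u)|u_x|\approx \chi uA'(u)$ near the front. So the model is not of KPP type, and the Hamel--Nolen--Roquejoffre--Ryzhik Dirichlet argument does not close by itself.

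Concretely, take your barrier $C(x+1)\phi(x/\sqrt{t+1})$. At $x=O(1)$ its linear part leaves only a slack that vanishes as $t\to\infty$. For $A=u^2$, the nonlinear terms there are roughly $C^2e^{-x}(x+1)\bigl[(2\chi-1)(x+1)-2\chi\bigr]$, which is positive and of order one for $x\ge 2$ once $\chi>\frac34$. Shifting the boundary further ahead does not help while your only boundary information is $u\le 1$. The same $O(e^{-x}\bar w^2)$ source defeats your Neumann profile at $\chi=1$. There your sign check is also reversed: $\bar w_x\le 0$ makes $-A'(\tilde u)\bar w_x\ge 0$, so the transport term hurts a supersolution rather than helping.

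The missing idea is the one the paper takes from Giletti. From behind the front out to distance $O(\log t)$ ahead of it, the supersolution must come from the traveling-wave ODE of the full equation:
\begin{itemize}
\item far behind the front, $U_*(x-2t+\frac32\log t)+\gamma/t$, which works because $A'>1$ near $u=1$;
\item beyond that, the wave ODE with speed lowered by $O(1/t)$, kept positive up to $\frac72\log t$ by the ODE stability estimate $|U-U_\eps|\le(\eps e^{-z})^{1-\eta}$.
\end{itemize}
The linear Dirichlet profile is used only for $x\ge 2t$. There $u\lesssim t^{-3/2}$, and the advection is controlled by $\phi_x\ge -Ct^{-3/2}$ against the $t^{-5/4}$ slack from the factor $1-t^{-1/4}$.

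On the lower bound for $\chi<1$, the paper needs no subsolution. Steepness gives $u_x\le 0$, so $u$ is a supersolution of $u_t=u_{xx}+u-A(u)$, and Bramson's bound for this KPP equation applies. Contrary to your step (3), monotonicity does not follow from \eqref{initial_data_assumption}; it comes from the steep, or at least monotone, initial data.

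For $\chi=1$ your step (2) cannot work as designed. A subsolution vanishing at $x=L$ has Dirichlet behavior, so in the $2t-\frac12\log t$ frame it decays like $t^{-1}$ near $x=L$ and only reproduces the $\frac32\log t$ delay. The $\frac12\log t$ lower bound needs information from behind the front, such as the conservation of $\int e^{x}u(t,x+2t)\,dx$, and a Dirichlet barrier throws this away. The paper does not prove part (ii) by barriers at all. It deduces it from the work of An, Henderson, and Ryzhik, via the relation between the $\chi=1$ model and the reaction--diffusion equation with $f(u)=(u-A(u))(1+A'(u))$.
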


    \Cref{front_location_theorem} was known in certain special cases.  First, when $\chi = 0$, there is no advection and it goes back to Bramson~\cite{Bramson1,Bramson2}.
    For $\chi = 1$, the result is a consequence of the main theorem of An, Henderson, and Ryzhik \cite{an2022quantitativesteepnesssemifkppreactions} using a miraculous relationship between~\eqref{main_eq} and~\eqref{FKPP} with $f(u) = (u-A(u))(1+ A'(u))$.
	For general $\chi$, to our knowledge, the only previous result on the front location is for the special case $A(u) = u^2$.
	Here, the front location was established by Leach and Hana\c{c}~\cite{Leach_Hanac}, formally, and then by An, Henderson, and Ryzhik~\cite{an2023pushedpulledpushmipullyufronts}, rigorously.
    We also mention the recent work~\cite{demircigil2026locationgrowmodelsaerotaxis} of Demircigil and Henderson, who proved front location asymptotics for a degenerate model with a discontinuous nonlinearity $A$.  Note that this falls outside of the assumptions of our work.
    This model is related to a free boundary problem studied by
    Berestycki, Penington, and Tough in \cite{berestycki2025convergencepositionfkpptypefree} in which
    the same front location was proven; see also~\cite{berestycki_free_boundary}.

    The proof of \Cref{front_location_theorem} when $\chi \in (0,1)$  is similar to that of
    Hamel, Nolen, Roquejoffre, and Ryzhik ~\cite{shortproof}, who considered the equation~\eqref{FKPP}
    with $f(u)$ under the ``KPP condition'' $f(u) \leq f'(0) u$.
    They construct a supersolution using the linearized Dirichlet problem ahead of the front.
    We combine the ahead-of-front supersolution with the idea of Giletti in~\cite{giletti2022monostablepulledfrontslogarithmic} to match the supersolution
    with a traveling wave behind the front.
    Giletti used this to generalize the work of~\cite{shortproof} on the equation~\eqref{FKPP} to the broader class of nonlinearities $f$ without the KPP condition.  We use these ideas to obtain an upper bound on the front location.
    The lower bound is nearly immediate by comparing \eqref{main_eq}
    to the reaction-diffusion equation
    \[u_t = u_{xx} + u - A(u).\]
    Then, from the above discussion, we already have the lower bound on the front location:
    \[m(t) \geq 2t - \frac{3}{2}\log t + O(1).\]
    The detailed proof of \Cref{front_location_theorem} can be found in \Cref{front_location_section}.

    At first glance, this transition in \Cref{front_location_theorem} may seem mysterious. Therefore, we would like to give some informal ideas for why we see such a change in $m(t)$.
    Consider
    \begin{equation}
    		\bar u(t, x) = e^x \tilde u(t, x) = e^x u(t, x + 2t).
	\end{equation}
    By~\eqref{main_eq}, we find
    \begin{equation}
        \bar u_t = \bar u_{xx} - \chi(e^x A(\tilde u))_x -(1-\chi)e^{x}A(\tilde u). \label{intuition_eqn}
    \end{equation}
    When $\chi = 1$, we see that $\norm{\bar u}_{L^1}$ is conserved, while
    when $\chi < 1$, $\norm{\bar u}_{L^1}$ is decreasing. Very informally, if we
    ignore the advective term in \eqref{intuition_eqn}, then for $\chi = 1$,
    we have the heat equation. Then solutions would decay like $\frac{1}{\sqrt t}$.
    In other words,
    \[u(t, x + 2t) \sim \frac{Ce^{-x}}{\sqrt t}.\]
    Then to ensure $u(t, m(t))$ stays bounded away from $0$, one must shift by $\frac{-1}{2} \log t$.

    If $\chi < 1$, then we have a negative nonlinear term: 
    \[
        -(1-\chi) \alpha(\tilde u) \bar u.
    \]
    Here, $\alpha(u) = A(u)/u$, and behind the front $\alpha(u) \sim 1$.
    Therefore, we have what looks like exponential decay behind the front. This
    makes the solution behave like that of a linearized Dirichlet problem. Hence,
    from the ideas of \cite{shortproof}, we would then expect a $-\frac{3}{2}\log t$ shift.
    Of course, this hinges on the believing that we can ignore the $(e^x A(\tilde u))_x$ term, which is not
    so obvious.

    Besides the front location, there is another key difference between the two cases:
    the asymptotics of the minimal speed traveling wave.
    When $\chi \in[0,1)$, we have exponential decay with a linear multiplicative factor: there is $\theta>0$ such that
        \begin{equation}
            U_*(x) = \theta xe^{-x} + O(e^{-x}) \qquad \text{as } x \to +\infty. \label{pulled_asympotics}
        \end{equation}
        On the other hand, in the case $\chi = 1$, $U_*$ has purely exponential asymptotics: there is $\theta>0$ such that
        \begin{equation}
            U_*(x) = \theta e^{-x} + o(e^{-x}) \qquad \text{as } x \to +\infty. \label{pushmi_pullyu_asympotics}
        \end{equation}

    The properties in the case $\chi = 1$, \eqref{pushmi_pullyu_front} and \eqref{pushmi_pullyu_asympotics}, are not typically seen in pulled fronts.
    For this reason, following the nomenclature in ~\cite{an2023pushedpulledpushmipullyufronts},
    we call the $\chi = 1$ case ``pushmi-pullyu.'' On the
    other hand, pulled fronts, generally, have the properties \eqref{pulled_front} and \eqref{pulled_asympotics}
    so we call the $\chi < 1$ case ``pulled.''
    Although a similar overarching approach is used in proving \Cref{main_result_theorem} for both cases,
    the analysis between the two is significantly different. This manifests in the
    pushmi-pullyu case being a fair amount simpler than the pulled case;
    see, for example \eqref{intro_pushmi_pullyu_eta} and the subsequent discussion.

    Before we move on, let us mention that the case $\chi > 1$ gives a ``pushed'' front.
    The methods for determining convergence for pushed fronts tend to
    be different because they yield exponential convergence rates \cite{rothe,FifeMcleod}.
    We would be interested to see if it is possible to adapt our weighted functional
    inequality method to the pushed case.

    \subsection{Shape defect function} \label{intro.ss.shape_defect}
    The main object we use to study convergence rates is the shape defect function,
    which was first introduced in \cite{an2022quantitativesteepnesssemifkppreactions}. We give a brief overview of its construction and properties.
    
    Since traveling wave solutions
    are monotonically decreasing, there exists a $C^1(0, 1)$ function $\eta(u)$ so that
    \begin{equation}
        -U_*' = \eta(U_*). \label{profile_function}
    \end{equation}
    The function $\eta$ is called the traveling wave profile function. In the
    pushmi-pullyu case $\chi = 1$, $\eta$ can be written explicitly as
    \begin{equation}
        \eta(u) = u - A(u). \label{intro_pushmi_pullyu_eta}
    \end{equation}
    When $\chi \in [0, 1)$, however, there is no such explicit formula. This fact
    causes significant complications in the upcoming proofs because in the
    pulled case we must work with asymptotic expansions of $\eta$ and
    its derivatives.

    The shape defect function is then defined to be
    \begin{equation}
        w(t, x) = -u_x - \eta(u(t, x)). \label{shape_defect_function}
    \end{equation}
    Recalling \eqref{profile_function}, $w$ compares the slope of $u$ to the slope of $U_*$ at $U_*^{-1}(u(t, \cdot))$.
    Therefore, $w$ gives us a way to quantitatively compare the steepness of the traveling
    wave to $u$, and allows us to define ``steep'' initial data in \Cref{main_result_theorem}, which we do now.

    \begin{definition}\label{d.steepness}
        We say a function $\mu: \R \to [0,1]$ is
        steep if
        \begin{equation}
            w_\mu := -\mu_x - \eta(\mu) \geq 0.
        \end{equation} 
    \end{definition}
    Having steep initial data means that $u_0$ is steeper than
    the traveling wave $U_*$. In addition, although it is not yet obvious, we see below that $u(t, \cdot)$ remains steep for $t > 0$ if $u_0$ is steep.
    This can be seen by considering the PDE \eqref{shape_defect_evolution_eqn} for $w$ and applying the comparison principle.

    The shape defect function also has the remarkable property
    that its decay can be leveraged to show that $u$ and $U_*$ are close together.
    In other words, one can show, roughly, that
    \begin{equation}
    		|u(t, x + m(t)) - U_*(x)| \sim \|w\|. \label{rough_idea}
    \end{equation} 
    The idea behind \eqref{rough_idea} is the simple observation, developed in \cite[Theorem 2.1]{an2023locationdeterminesconvergencerate}, that
    \begin{equation}
        s(t, x) := u(t, x+m(t)) - U_*(x)
    \end{equation}
    satisfies for some $\xi$ between $u(t, x+m(t))$ and $U_*(x)$,
    \begin{equation}
        s_x = u_x - U_*' = - w - \eta(u) + \eta(U_*) = - w - \eta'(\xi)s. \label{intro_difference}
    \end{equation}
    One can apply the variation of parameters formula to this ODE and use the decay on $w$ along with asymptotics for $\eta$
    to obtain a decay rate for $s$. The asymptotics for $\eta$ are easily proven from the
    asymptotics of $U_*$, see \cite[Lemma 3.4]{an2023locationdeterminesconvergencerate}. 
    Hence, the difficult part of this argument is in proving the following (reasonably) sharp bounds on $w$.

    \begin{theorem}\label{shape_defect_theorem}
        Suppose that $u$ solves \eqref{main_eq} with the same assumptions as \Cref{main_result_theorem}. Then
        \begin{enumerate}[(i)]
            \item\label{i.pulled}
            If $0 \leq \chi < 1$, then
            \begin{equation}
                w(t, x + m(t)) \leq \frac{C}{t} \1_{\{x \leq 0\}} + \frac{C(1+x)^2e^{-x}}{t} \1_{\{x \geq 0\}}. \label{pulled_w_decay}
            \end{equation}
            \item\label{i.pushmi-pullyu}
            	 If $\chi = 1$, then
            \begin{equation}
                w(t, x+m(t)) \leq \frac{C}{t} \1_{\{x \leq 0\}} + \frac{C(1+x)e^{-x}}{t} \1_{\{x \geq 0\}}. \label{pushmi_pullyu_w_decay}
            \end{equation}
        \end{enumerate}
    \end{theorem}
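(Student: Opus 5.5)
We would derive a parabolic equation for $w$, move to the moving frame, conjugate by the exponential weight suggested by the linearization, and then use energy estimates together with a weighted Nash inequality to turn $L^1$-type control into $L^\infty$ decay.

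\textbf{Step 1: the equation for $w$.} Differentiate \eqref{main_eq} in $x$ and use the ODE satisfied by $\eta$. The ODE comes from \eqref{intro_TW_eqn} with $c=2$ and $U_*'=-\eta(U_*)$, which gives $\eta\eta' - 2\eta + \chi A'\eta = u - A(u)$ after the change of variables. The result is a linear equation
\begin{equation*}
w_t + \bigl(\chi A'(u) + \eta'(u) + \text{l.o.t.}\bigr) w_x = w_{xx} + \bigl(1-\chi A'(u) - \eta'(u)^2\ldots\bigr) w + \chi A''(u)\, u_x w + \ldots,
\end{equation*}
whose coefficients depend only on $u$ and $u_x$. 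Since $w$ vanishes identically for $u=U_*$, the comparison principle gives $w\ge 0$ from steep data. It also gives crude bounds: $w$ is bounded, and $w\lesssim u$ ahead of the front.

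Next we pass to the frame $x+m(t)$, using \Cref{front_location_theorem}, and conjugate by $e^{x}$ (or by $e^{x}$ times a weight). Setting $z=e^{x}w(t,x+m(t))$, the equation for $z$ resembles $z_t = z_{xx} + b z_x + V z$. In the potential, the $e^x$-conjugation cancels the leading linear growth, and $\dot m - 2$ contributes a term of size $O(1/t)$. Behind the front $V$ is negative and of order one because $\eta'(u)\to$ const as $u\to1$. Ahead of the front it is small, using the asymptotics of $\eta$ near $0$ obtained from \eqref{pulled_asympotics} and \eqref{pushmi_pullyu_asympotics}.

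\textbf{Step 2: energy and Nash.} We test the $z$-equation against $z$ with a weight $\rho(x)$. For $\chi<1$ we take $\rho$ growing like $(1+x)^{-2}$-adapted weights ahead of the front, mimicking the Dirichlet heat kernel. For $\chi=1$ the weight is essentially flat, which reflects the conservation structure in \eqref{intuition_eqn}. We then aim for a differential inequality
\begin{equation*}
\frac{d}{dt}\|z\|_{L^2_\rho}^2 \le -c\,\|z_x\|_{L^2_\rho}^2 + \frac{C}{t}\|z\|_{L^2_\rho}^2.
\end{equation*}
To close it, we need an a priori bound on a weighted $L^1$ norm of $z$. We would get this bound by integrating the equation, using $w\ge0$ and the divergence-form structure of the advection. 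A weighted Nash inequality
\[\|z\|_{L^2_\rho}^{2+4/d}\lesssim \|z_x\|_{L^2_\rho}^2\,\|z\|_{L^1_\rho}^{4/d},\]
where the effective dimension $d$ is $3$ in the pulled case and $1$ in the pushmi-pullyu case, then yields algebraic $L^2$ decay.

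\textbf{Step 3: from $L^2$ to $L^\infty$ and unweighting.} A duality argument on the adjoint equation, or a local parabolic $L^2\to L^\infty$ (Moser or Harnack) estimate on unit cylinders, upgrades the decay to $L^\infty_\rho$ decay of rate $1/t$. Undoing the conjugation gives the factors $(1+x)^2e^{-x}$ and $(1+x)e^{-x}$ for $x\ge0$. For $x\le0$ we use a separate argument. There the potential is strictly negative, so a barrier of the form $C/t$ plus boundary data at $x=0$ from the previous bound works via the maximum principle.

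\textbf{Main obstacle.} The hardest step is Step 2 in the pulled case. It requires expansions of $\eta,\eta',\eta''$ near $u=0$ that are precise enough for the weighted potential to have the right sign up to $O(1/t)$ errors. It also requires that the drift $(\dot m-2)\sim -\tfrac{3}{2t}$ interact correctly with the weight, so that the Nash inequality gives exactly the $t^{-1}$ rate rather than a logarithmic loss. I would first try to choose $\rho$ as the ground state of the frozen limiting operator ahead of the front, so that the cross terms vanish at leading order.
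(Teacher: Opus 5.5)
Your architecture matches the paper's: the equation for $w$, $w\ge 0$ by comparison, conjugation in the moving frame, $L^2$ energy plus Nash, then duality through the adjoint. However, the two quantitative inputs that actually produce the rate $1/t$ and the prefactors are missing or set up in a way that fails.

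\textbf{Forward $L^1$ bound.} It cannot come from integrating the equation. In the frame $m(t)$, for $\chi=1$, $\bar w=e^x\tilde w$ solves $\bar w_t=\bar w_{xx}-A'(\tilde u)\bar w_x-\frac{1}{2t}(\bar w_x-\bar w)$. For $\chi<1$ there is a source $+\frac{3}{2t}\bar w$ plus nonpositive terms. Integration therefore gives only $\int\bar w\lesssim\sqrt t$, resp.\ $t^{3/2}$, which is a factor $\sqrt t$, resp.\ $t$, worse than needed. The negative terms $A''(\tilde u)\tilde u_x\bar w$ and $2\bar w\eta(\eta''+\chi A'')$ are comparable to the source and cannot be quantified without already knowing the profile of $w$. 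The missing idea is to use that $w$ is the shape defect of an actual solution. Integrating by parts gives $\int e^x\tilde w\,dx=\int e^x(\tilde u-\eta(\tilde u))\,dx$. For $\chi=1$ this equals $\int e^xA(\tilde u)\le C$. For $\chi<1$ it is $\lesssim\sqrt t$, by combining the Gaussian upper bound on $\tilde u$ from the front-location supersolution with $u-\eta(u)\lesssim u/\log(1/u)$, which strips the linear prefactor from $\tilde u$. After normalizing $\hat w=t^{-1/2}h\tilde w$, resp.\ $t^{-3/2}h\tilde w$, the forward step is the exact inequality $\frac{d}{dt}\int\hat w^2\le-2\int\hat w_x^2$ plus the \emph{classical} ($d=1$) Nash inequality in both cases.

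\textbf{Adjoint side and the prefactors.} The extra decay and the factors $(1+x)$, $(1+x)^2$ come entirely from the adjoint side, where your dimension count is two short. One must show that the adjoint solution $v$ has nonincreasing $\int vg$, with $g=x+1$ ($\chi=1$) or $g=x^2+Mx+M$ ($\chi<1$) for $x\ge0$. This is then paired with the half-line inequality $\int_0^\infty\varphi^2\le CL^2\int_0^\infty\varphi_x^2+CL^{-2\beta-1}\bigl(\int_0^\infty|\varphi|x^\beta\bigr)^2$ with $\beta=1,2$, i.e.\ effective dimension $3$ and $5$, not your $1$ and $3$. Even granting the correct $L^1$ bound, a flat weight with $d=1$ for $\chi=1$ yields at best $\tilde w\lesssim e^{-x}t^{-1/2}$, with no $(1+x)$ factor. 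A local Moser or Harnack estimate gains no time decay at all, so it cannot replace duality.

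\textbf{Pulled-case cancellation.} For $\chi<1$, monotonicity of $\int vg$ hinges on cancelling $\int_0^\infty vg''=2\int_0^\infty v$ against $2\eta(\eta''+\chi A'')g$. This needs $2\eta(\tilde u)(\eta''+\chi A'')(\tilde u)\le-\frac{2}{x^2}+\frac{1}{xt}+\frac{C}{x^3}$ for large $x$, with the $\frac{1}{xt}$ error absorbed by the drift term $-\frac{3}{2t}g'$. Since $-1/(\log u)^2$ is decreasing in $u$, this requires a \emph{lower} bound $\tilde u\gtrsim xe^{-x}e^{-x^2/(4t)}$, which your plan never supplies. Dropping the nonlinear term in every energy estimate also needs the global sign $\eta''+\chi A''\le0$ on all of $(0,1)$, not just asymptotics near $u=0$; the paper proves this by an ODE contradiction argument.

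\textbf{Step 1 and the left side.} Your Step 1 equation is not right. The correct equation is $w_t=w_{xx}-\chi A'(u)w_x+(1-A'(u))w+2w(w+\eta(u))(\eta''(u)+\chi A''(u))$, and damping behind the front comes from $1-A'(u)<0$ near $u=1$. For $\chi=1$, pure $e^x$-conjugation kills this damping. That is why the paper conjugates by $h$ (constant on the left, $e^x$ on the right): it keeps the damping on $x\le0$ that lets the half-line Nash inequality be used in the adjoint step. Your separate maximum-principle barrier $C/t$ on $x\le0$ is a valid and simpler alternative, once the $x\ge0$ bound is in hand.
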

    
        In fact, one can use the argument outlined above to deduce \Cref{main_result_theorem} from \Cref{shape_defect_theorem}.  We omit the details, which follow~\cite[Theorem 2.1]{an2023locationdeterminesconvergencerate} exactly, and only prove \Cref{shape_defect_theorem} in this work.

    The most recent work related to ours is~\cite{an2023locationdeterminesconvergencerate}, which considers the pure reaction-diffusion equation~\eqref{FKPP} and not our nonlinear advection model~\eqref{main_eq}.  Here, the authors noticed the link between the shape defect function and convergence rates to the traveling wave.  
    They use a complex barrier argument that implicitly uses a fine understanding of the shape of $w$ in order to bound $w$ pointwise and deduce convergence rates.
    
    In contrast, our proof of \Cref{shape_defect_theorem} for the nonlinear advection model~\eqref{main_eq} follows ideas dating back to Nash~\cite{Nash} to deduce pointwise bounds in $w$ from simple functional inequalities.  
    The result is a cleaner and possibly more general proof, which we believe applies equally to~\eqref{FKPP}.
    We give a more detailed exposition in \Cref{intro_key_elems} on our approach; that is, how one deduces \Cref{shape_defect_theorem} from a weighted Nash inequality.

    Before~\cite{an2023locationdeterminesconvergencerate},
    the front location for the Fisher-KPP equation was studied extensively
    and led to a proof of sharp convergence rates.  Let us emphasize that this proof applies to the special case of the pure reaction-diffusion equation~\eqref{FKPP} under the assumption that $f(u) \leq f'(0) u$;
    e.g., $f(u) = u(1-u)$.  In a sense, one should think of this as analogous to our case with ``small'' $\chi$. 
    
    Let us illustrate why precise front location asymptotics is a bottleneck for convergence rates with the approach of~\cite{shortproof, nolen2016convergencesinglewavefisherkpp, nolen2018refinedlongtimeasymptotics,graham}.  Very roughly, 
    if an approximation, $m_{\rm app}$, ``misses'' the true front location $m(t)$
    by an error rate $O(\epsilon_t)$, then
    $\|u(t,\cdot + m_{\rm app}) - U_*\|$ is also $O(\epsilon_t)$.  Indeed, even if convergence was exact, i.e., $u(t,\cdot + m) = U_*$, we find
    \begin{equation}
		u(t,\cdot + m_{\rm app}) - U_*
			= u(t,\cdot + m_{\rm app}) - u(t,\cdot + m)
			\approx u_x(t,\cdot + m_{\rm app}) (m_{\rm app} - m)
			= O(\eps_t).
    \end{equation}
    It is for this reason, that, in successive works, authors sought more and more precise expansions of $m$.

    The effort to understand the front location gave rise to the program of Hamel, Nolen, Roquejoffre, and Ryzhik~\cite{shortproof, nolen2016convergencesinglewavefisherkpp, nolen2018refinedlongtimeasymptotics}
    in which each paper proves successively finer asymptotics.  The last of these papers showed
    \begin{equation}
        m(t) = 2t - \frac{3}{2}\log t + x_0 - \frac{3\sqrt \pi}{\sqrt t} + O(t^{-1 + \gamma}) \quad \text{ as } t \to +\infty,
    \end{equation}
    for $x_0$ depending on the initial data and any $\gamma >0$. As discussed above, this yields a convergence rate of $O(t^{-1+\gamma})$, which is not quite the sharp $O(1/t)$ asymptotics.  Afterwards, Graham \cite{graham}
    capped off the program by showing that
    \begin{equation}\label{fine_asymp}
        m(t) = 2t - \frac{3}{2}\log t + x_0 - \frac{3\sqrt \pi}{\sqrt t} + \frac{9}{8}(5-6\log 2)\frac{\log t}{t} + O(1/t) \quad \text{ as } t \to +\infty.
    \end{equation}
    This leads to $O(1/t)$ convergence rates for the Fisher-KPP equation.  Additionally, \cite{graham} also shows that
    $O(1/t)$ is the best convergence rate in this setting.
    Note that this result applies to the pure reaction-diffusion equation~\eqref{FKPP}; that is, it does not apply to our nonlinear advection model \eqref{main_eq} except in the special case $\chi = 0$.

    The asymptotics in \eqref{fine_asymp} had been
    expected due to several earlier predictions.
    The formal understanding of~\eqref{fine_asymp} goes back at least to
    Ebert and van Saarloos~\cite{Ebert_2000}, who predicted the $3\sqrt \pi$ coefficient
    in \eqref{fine_asymp}, while Berestycki and Brunet \cite{berestycki2016noteconvergencefisherkppcentred} found that {\em if} $m'(t) = 2 + O(1/t)$,
    {\em then} $u$ converges to $U_*$ with $O(1/t)$ correction. Later,
    Berestycki, Brunet, and Derrida \cite{new_approach} predicted the $\frac{\log t}{t}$ term and its
    coefficient.
    Similar front location asymptotics have been explored in other contexts and generalizations; we refer the reader to
    \cite{henderson,vanishing_corrections, Berestycki_2017,giletti2022monostablepulledfrontslogarithmic, avery_scheel_selection} for more details.

    In the end, the $O(1/t)$ convergence rate was predicted and proven from
    the difficult front location calculations in the papers mentioned above.
    Our approach with the shape defect function allows us to sidestep the fine front location asymptotics.  Indeed, one does not need a precise understanding of $m$ to deduce~\eqref{rough_idea}, which is the basis for our arguments.  Moreover, bounds on $\|w\|$ in the $m$ and $m_{\rm app}$ frames are equivalent up to a multiplicative constant as long as $m = m_{\rm app} + O(1)$.
    This simplifies the proof of convergence rates in a major way because the
    most we need is $O(1)$ front location precision. For this reason,
    the asymptotics in \Cref{front_location_theorem} are sufficient for our purposes.

    \subsection{Key elements of the proof of \Cref{shape_defect_theorem}}\label{intro_key_elems}

    Now we give an overview for the main difficulties in proving \Cref{shape_defect_theorem},
    along with the ideas we use to overcome them. The detailed proof
    can be found in \Cref{pushmi_pullyu_section,pulled_section}.

    \subsubsection{Shape defect function evolution equation}\label{evolution_eqn_section}
    We primarily analyze $w$ through the partial differential equation that it satisfies. In \Cref{derivation},
    we compute the evolution equation for $w$:
    \begin{equation}
        w_t = w_{xx} - \chi A'(u)w_x + (1-A'(u))w + 2w(w+\eta(u))(\eta''(u) + \chi A''(u)). \label{shape_defect_evolution_eqn}
    \end{equation}
    At first, \eqref{shape_defect_evolution_eqn} may seem difficult to handle, however, there are a few facts that we point out. 
    First, note that $w = 0$ is a solution, so by the comparison principle, if
    $w_0(x) = w(0, x) \geq 0$, then $w(t, x) \geq 0$ for all $t > 0$. An immediate
    consequence is that if $w_0 \geq 0$, then $u_x \leq 0$ for all time.
    
    The second fact we point out, which is not immediately obvious, is that for all $\chi \geq 0$,
    \begin{equation}
        \eta''(u) + \chi A''(u) \leq 0. \label{bad_nonlinear_term}
    \end{equation}
    For $\chi = 1$, recalling \eqref{intro_pushmi_pullyu_eta}, we have that
    \[\eta''(u) + A''(u) = (u - A(u))'' + A''(u) = 0.\]
    For $\chi \in [0, 1)$,
    we prove this fact in \Cref{concave}.
    Therefore, being slightly heavy handed, we can ignore $\eta'' + \chi A''$ in~\eqref{shape_defect_evolution_eqn}, and just consider
    \begin{equation}
        w_t \leq w_{xx} - \chi A'(u)w_x + (1-A'(u))w. \label{simplified_eqn}
    \end{equation}

    The last fact we mention is that $1-A'(u)$ is negative ``far enough''
    behind the front; that is, when $u$ is close to $1$. For example, take $A(u) = u^2$, so that
    \[1-A'(u) = 1 - 2u.\]
    In this case, whenever $u > 1/2$, we have $1-A'(u) < 0$.
    Using \eqref{simplified_eqn}, we see that any growth of $w$ can only occur ahead of the front.
    This gets at the crux of of our arguments
    in which we must balance the growth ahead of the front with the decay behind the front.

    \subsubsection{Digression to heat equation} \label{toy_model}
    To motivate our approach, let us investigate the heat equation, which is a toy model
    we appeal to frequently.
    Let $z_t = z_{xx}$ with
    initial data $z_0(x) \geq 0$ decaying quickly enough. Then the Nash inequality, along with the fact that $\norm{z}_{L^1}$ is conserved, implies that
    \begin{equation}
        \frac{1}{2}\frac{d}{dt}\left[\int z^2 dx\right] = -\int z_x^2 dx \leq -C \frac{\left(\int z^2 dx\right)^{3}}{\left(\int z_0 dx\right)^4}. \label{heat_eqn_dissipation}
    \end{equation}
    Thus,
    \begin{equation}
        \norm{z(t)}_{L^2} \leq \frac{C\norm{z_0}_{L^1}}{t^{1/4}}. \label{heat_eqn_L1_L2}
    \end{equation}
    By a standard adjoint argument using the symmetry of the Laplacian, one can also show
    \[\norm{z(t)}_{L^\infty} \leq \frac{C\norm{z_0}_{L^2}}{t^{1/4}}.\]
    Combining these bounds, one obtains
    \begin{equation}
        \norm{z(t)}_{L^\infty}
    		\leq \frac{C \norm{z(t/2)}_{L^2}}{(t/2)^{1/4}}
		\leq \frac{C \norm{z_0}_{L^1}}{(t/2)^{1/2}}. \label{heat_eqn_L2_Linf}
    \end{equation}
    Therefore, we see that an $L^2$-based energy method with the Nash inequality yields the
    correct temporal decay for solutions to the heat equation.

    Although the equation~\eqref{shape_defect_evolution_eqn} for $w$ does not initially
    appear similar to the heat equation,
    after the transformation
    \[\bar w(t, x) = e^x \tilde w(t, x) = e^x w(t, x+2t),\]
     we have
     \begin{equation}
        \bar w_t = \bar w_{xx} - \chi A'(\tilde u) \bar w_x - (1-\chi) A'(\tilde u) \bar w + 2\bar w(\tilde w + \eta)(\eta'' + A'').
        \label{intro_bar_w}
     \end{equation}
    The pushmi-pullyu case ($\chi = 1$) provides a good example for intuition since \eqref{intro_bar_w} simplifies to
    \begin{equation} \label{intro_transformed}
        \bar w_t = \bar w_{xx} - A'(\tilde u) \bar w_x.
    \end{equation}
    This looks much more like the heat equation, so it is plausible that an $L^2$-energy argument
    using a Nash-type inequality could be successful.

    \subsubsection{Weighted Nash inequality}
    We now state our main functional inequality. In the upcoming proofs of \Cref{shape_defect_theorem}.\eqref{i.pushmi-pullyu}
    and \Cref{shape_defect_theorem}.\eqref{i.pulled}, we only use $\beta = 1$ and $\beta = 2$, respectively. There is no
    extra difficulty in proving \Cref{weighted_nash_ineq_lemma} for all $\beta \geq 0$, so we give the general statement.
    \begin{proposition}\label{weighted_nash_ineq_lemma}
        Let $\beta \geq 0$. There exists a constant $C > 0$, depending only on $\beta$, such that, for all $\varphi \in H^1(\R_+)\cap L^1(\R_+, x^\beta dx)$ and all $L > 0$,
        \begin{equation}
            \int_0^\infty \varphi^2dx \leq CL^2\int_0^\infty \varphi_x^2 dx + \frac{C}{L^{2\beta+1}}\left( \int_0^\infty |\varphi|x^\beta dx\right)^2. \label{weighted_nash_ineq}
        \end{equation}
    \end{proposition}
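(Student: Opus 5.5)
We prove the estimate first for $L=1$ and then recover general $L$ by scaling. Given $\varphi$, set $\psi(y)=\varphi(Ly)$. Then
\[
\int_0^\infty \varphi^2\,dx = L\int_0^\infty \psi^2\,dy, \qquad \int_0^\infty \varphi_x^2\,dx = L^{-1}\int_0^\infty \psi_y^2\,dy,
\]
and
\[
\int_0^\infty|\varphi| x^\beta\,dx = L^{1+\beta}\int_0^\infty |\psi| y^\beta\,dy.
\]
Suppose the inequality holds for $\psi$ with $L=1$. Multiplying it by $L$ gives
\[
\int \varphi^2 \le C L^2\int\varphi_x^2 + C L^{1-2(1+\beta)}\Big(\int|\varphi|x^\beta\Big)^2,
\]
and $L^{1-2(1+\beta)} = L^{-(2\beta+1)}$. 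So everything reduces to the case $L=1$, namely
\[
\|\varphi\|_{L^2(\R_+)}^2 \le C\|\varphi_x\|_{L^2(\R_+)}^2 + C\Big(\int_0^\infty|\varphi|x^\beta dx\Big)^2 .
\]

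For the case $L=1$, split $\R_+$ into $[0,2]$ and $[2,\infty)$, or more conveniently work with dyadic intervals $I_k=[2^k,2^{k+1}]$ for $k\ge0$ together with $I_{-1}=[0,1]$. On each interval $I$ of length $\ell$ we use the elementary one-dimensional estimate
\[
\sup_I|\varphi| \le \frac{1}{\ell}\int_I|\varphi| + \ell^{1/2}\|\varphi_x\|_{L^2(I)},
\]
which follows from the mean value theorem and Cauchy--Schwarz. This gives
\[
\int_I\varphi^2 \le \ell\sup_I\varphi^2 \le \frac{2}{\ell}\Big(\int_I|\varphi|\Big)^2 + 2\ell^2\|\varphi_x\|^2_{L^2(I)}.
\]
The difficulty is that $\ell^2$ grows on the large intervals, so this dyadic version is not good enough far out; the tail needs a different treatment.

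For the tail we use global control instead. Pick a point $x_0\in[1,2]$ where $|\varphi(x_0)|\le\int_1^2|\varphi|\le\int|\varphi|x^\beta$, which is possible since $x^\beta\ge1$ there. For $x\ge x_0$ write $\varphi(x)^2\le 2\varphi(x_0)^2+2\big(\int_{x_0}^x|\varphi_x|\big)^2$. This also fails for large $x$, so we instead bound $\sup|\varphi|$ directly. For any $x$ and any $y$ in the interval between $x$ and a suitable nearby point, $|\varphi(x)|\le|\varphi(y)|+|x-y|^{1/2}\|\varphi_x\|_{L^2}$. Averaging $y$ over $[\max(x-1,0),\max(x-1,0)+1]$ gives
\[
\|\varphi\|_\infty \le \int_0^\infty|\varphi|\,\1_{\text{unit interval}} + \|\varphi_x\|_{L^2}.
\]
The $L^1$ term over a unit interval is controlled by $\int|\varphi|x^\beta$ only when $x\ge1$; on $[0,1]$ we use instead the averaging window $[1,2]$ at cost $|x-y|\le2$. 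Hence
\[
\|\varphi\|_\infty\le \int_0^\infty|\varphi|x^\beta\,dx+\sqrt2\,\|\varphi_x\|_{L^2}.
\]
Then
\[
\int\varphi^2\le\|\varphi\|_\infty\int_0^\infty|\varphi|\,dx,
\]
but $\int|\varphi|$ over $[0,1]$ is not controlled by the weighted norm, since the weight $x^\beta$ vanishes at $0$. We fix this by splitting $\int\varphi^2=\int_0^1\varphi^2+\int_1^\infty\varphi^2$. On $[0,1]$ we simply have $\int_0^1\varphi^2\le\|\varphi\|_\infty^2$. On $[1,\infty)$ we have $\int_1^\infty\varphi^2\le\|\varphi\|_\infty\int_1^\infty|\varphi|x^\beta$. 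Both pieces are bounded by $C(\|\varphi_x\|_{L^2}^2+(\int|\varphi|x^\beta)^2)$ via Young's inequality. This proves the case $L=1$, and scaling finishes the proof.

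The main obstacle is the sup-norm bound, and specifically the region near $0$, where the weight degenerates and the choice of averaging window is needed. Once the bound $\|\varphi\|_\infty\lesssim\int|\varphi|x^\beta+\|\varphi_x\|_{L^2}$ is in place, everything else is routine. If $\varphi\notin C_c$, we argue first for smooth compactly supported approximants and pass to the limit.
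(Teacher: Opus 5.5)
Your proof is correct, and it takes a genuinely different route from the paper.

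\textbf{The paper's route.} The paper never rescales. It partitions $\R_+$ into cells $I_k=[kL,(k+1)L]$ of fixed length $L$. On each cell, with $a$ the point where $|\varphi|$ is smallest, it expands $0\le\int(x-kL)\bigl(\varphi_x A+\varphi/A\bigr)^2dx$ with $A^2=2(x-kL)$, together with the mirror version on $[a,(k+1)L]$. This gives the local bound $\int_{I_k}\varphi^2\le 4L^2\int_{I_k}\varphi_x^2+4L\varphi(a)^2$. It then trades $L\varphi(a)^2$ for $C_\beta^2(k+1)^{-2\beta}L^{-2\beta-1}\bigl(\int_{I_k}|\varphi|x^\beta\bigr)^2$, using $\int_{I_k}x^\beta\gtrsim(k+1)^\beta L^{\beta+1}$, and sums over $k$. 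The uniform mesh is exactly what avoids the growth of $\ell^2$ that defeated your dyadic attempt.

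\textbf{Your route.} You use scale invariance to reduce to $L=1$. You then prove the sup-norm bound $\|\varphi\|_\infty\le\int_0^\infty|\varphi|x^\beta\,dx+\sqrt2\|\varphi_x\|_{L^2}$ by averaging the fundamental theorem of calculus over a unit window contained in $[1,\infty)$. You finish with $\int_0^1\varphi^2\le\|\varphi\|_\infty^2$ and $\int_1^\infty\varphi^2\le\|\varphi\|_\infty\int_1^\infty|\varphi|x^\beta$.

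\textbf{What each buys.} Your argument is shorter. It also yields a constant independent of $\beta$, since at unit scale the weight is only used where $x^\beta\ge1$. The paper's $C_\beta$ instead comes from the lower bound on $\int_{I_k}x^\beta$. In exchange, the paper's argument is scale-explicit and produces a stronger, cell-localized intermediate estimate.

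\textbf{Points to clean up.} With your window $[\max(x-1,0),\max(x-1,0)+1]$, the case $x\in[1,2)$ gives $[x-1,x]$. That window dips into $[0,1)$, where $x^\beta<1$. Use $[1,2]$ for all $x\in[0,2]$ (the distance is still at most $2$, matching your $\sqrt2$), or use the forward window $[x,x+1]$ for $x\ge1$. You should also delete the abandoned dyadic and $x_0$ attempts. Finally, the density remark is unnecessary: $H^1(\R_+)$ functions are absolutely continuous, so your pointwise argument applies to them directly.
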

    A reader well-versed in functional inequalities might recognize that
    \eqref{weighted_nash_ineq} is not how the Nash inequality is usually stated, however, after setting $\beta = 0$ and
    choosing $L$ to optimize the right side of \eqref{weighted_nash_ineq}, we recover the usual form. It is beneficial to
    leave the inequality as \eqref{weighted_nash_ineq} for flexibility in future calculations.
    The proof of \Cref{weighted_nash_ineq_lemma}
    can be found in \Cref{weighted_nash_ineq_proof_section}.

    Notice that the integrals in \eqref{weighted_nash_ineq} have bounds only on the positive half line.
    This poses a slight problem, because we make no assumptions on the value of $w$ at $x=0$.
    Fortunately, due to the negativity of $1-A'(\tilde u)$ behind the front, we are able to ``match'' the
    estimate \eqref{weighted_nash_ineq} on the right with a favorable estimate on the left. We then get a dissipation estimate
    that is similar to \eqref{heat_eqn_dissipation}.

    From \eqref{weighted_nash_ineq}, we see that weighted $L^p$ spaces naturally appear.
    Below in \Cref{notation_section}, we define the weighted $L^p_g$ spaces that are used in
    this paper.

    \subsubsection{Adjoint argument}\label{intro_adjoint_arg}
    Let us return to our toy model, the heat equation. The symmetry that we mention
    in \Cref{toy_model} is that the formal adjoint equation for the heat equation is
    still the heat equation.
    This fact is convenient in that is allows an $L^1 \to L^2$ bound to be
    bumped up to an $L^1 \to L^\infty$ bound.
    Unfortunately, we do not have this luxury when we consider the shape defect function.
    To see this, recall the transformed equation ~\eqref{intro_transformed} when $\chi = 1$:
    \begin{equation}
                \bar w_t = \bar w_{xx} - \chi A'(\tilde u)\bar w_x. \label{intro_w_2}
    \end{equation}
    Therefore, the formal adjoint equation to 
    \eqref{intro_w_2} is given by
    \begin{equation}
        v_t = v_{xx} + \chi (A'(\tilde u) v)_x. \label{intro_adjoint}
    \end{equation}
    Since \eqref{intro_w_2} is not the same as \eqref{intro_adjoint},
    we cannot immediately deduce an $L^2 \to L^\infty$ bound from an $L^1 \to L^2$ bound.

    The way we get around this is by considering the solution operator $\tilde S_t$ for \eqref{intro_adjoint}.
    This solution operator is defined as the mapping of an initial data $v_0$ to the solution of \eqref{intro_adjoint}
    at time $t$. If we can compute an $L^1_g \to L^2$ bound for $\tilde S_t$, we then obtain 
    the same $L^2 \to L^\infty_{1/g}$ bound for the adjoint operator of $\tilde S_t$, denoted $\tilde S_t^*$.
    The key point is that $\tilde S_t^*$ is a solution operator
    of \eqref{intro_w_2}. Therefore, we can
    still have an analogue of
    \eqref{heat_eqn_L2_Linf} even for our equation that lacks the symmetry of the heat equation. There is a catch, though. We must
    apply two $L^2$-energy arguments: one for the forward equation and one for the formal adjoint equation.

    One of the quirks of this adjoint argument is that only the weight
    used in analyzing the adjoint equation appears in the $L^\infty_{1/g}$-bound.
    We use this fact in our
    arguments, as we do not compute a weighted $L^1 \to L^2$ bound for \eqref{intro_w_2}.
    Instead we directly compute the $L^1$-norm, using the definition of the shape
    defect function \eqref{shape_defect_function}, and then using pointwise bounds on $u$.
    That is, we calculate
    \begin{equation}
        \int \bar w dx = \int e^x \tilde w(t, x)dx = \int e^x (-\tilde u_x-\eta(\tilde u))dx = \int e^x (\tilde u - \eta(\tilde u))dx.
    \end{equation}
    In the last equality, we integrated by parts.  Therefore, knowing estimates on $u$ and $\eta$ allows us to calculate $\norm{\bar w}_{L^1}$.

    Of course, this argument immediately fails for the formal adjoint equation because its
    solution, $v$,
    almost certainly is not a shape defect function coming from a solution to \eqref{main_eq}.
    In other words, we cannot write 
    \[v = e^x(-\tilde u_x - \eta(\tilde u))\]
    like we could for $\bar w$.
    Therefore, to bound $\norm{v}_{L^1_g}$, we must honestly compute estimates on
    \begin{equation} \label{intro_adjoint_L1}
        \frac{d}{dt} \int v g(x)dx
    \end{equation}
    using only the formal adjoint equation. Recall that as $x \to +\infty$, $g(x) \sim x^\beta$
    where $\beta = 1$ if $\chi = 1$ and $\beta = 2$ if $\chi < 1$. The reason for the different choice of $\beta$ is
    given below in \Cref{correct_weight_into}. This difference in weights also causes the
    main technical difference between the pulled ($\chi < 1$) and pushmi-pullyu ($\chi = 1$) cases.
    Notice that when we calculate \eqref{intro_adjoint_L1}, the Laplacian term disappears
    after integrating by parts twice when $\chi = 1$ since $g(x)$ is linear. 
    On the other hand, for the pulled case,
    the weight is quadratic meaning
    that the Laplacian term remains:
    \begin{equation} \label{remaining_term}
        \int \bar w_{xx} g(x)dx \approx 2\int \bar w dx.
    \end{equation}
    Compensating for \eqref{remaining_term}
    can only be done through by utilizing the negative integral
    \begin{equation} \label{compensating_term}
        \int \bar w \eta(\tilde u)(\eta''(\tilde u) + \chi A''(\tilde u)) g(x)dx,
    \end{equation}
    and the cancellation between \eqref{remaining_term} and \eqref{compensating_term}
    turns out to be quite delicate.

    \subsubsection{Choosing the correct weight} \label{correct_weight_into}
    To extract the sharp decay for $w$, we must be careful in choosing our weights.
    Essentially, for
    $\chi = 1$, the weight we choose is $g(x) \sim x$ as $x \to \infty$, while for
    $\chi \in [0, 1)$, we choose $g(x) \sim x^2$ as $x \to \infty$. It is not a coincidence
    that the weights match with the estimates on $w$ in \eqref{pulled_w_decay} and \eqref{pushmi_pullyu_w_decay}.  This is discussed 
    above in \Cref{intro_adjoint_arg}.
    
    To see why it makes sense to choose these weights, first consider the equation for $w$ in the pushmi-pullyu case.
    Since $\eta'' + A'' = 0$, \eqref{shape_defect_evolution_eqn} simplifies to
    \[w_t = w_{xx} + w(1-A'(u)) - A'(u) w_x.\]
    Notice that ahead of the front, this equation has the linearization
    \begin{equation}
        w_t = w_{xx} + w. \label{intro_w_lin}
    \end{equation}
    The linearized equation \eqref{intro_w_lin} is the same as the linearization of the Fisher-KPP equation, \eqref{intro_linearized}. 
    In the front shifted reference frame, solutions to Fisher-KPP look like
    \begin{equation}
        xe^{-x}e^{-\frac{x^2}{4t}} \quad \text{ as } x \to \infty. \label{intro_w_guess}
    \end{equation}
    Recall that from \Cref{intro_adjoint_arg}, an $L^1_g \to L^2$ bound for the formal adjoint equation
    yields a $L^2 \to L^\infty_{1/g}$ bound for the shape defect function. To
    match with the algebraic factor $x$ in \eqref{intro_w_guess}, we must choose $g(x)$
    to also be linear ahead of the front.

    For the pulled case, the intuition for the quadratic weight is not as
    easy to see. The difficulty is caused by the negative nonlinear term:
    \[2w\eta(u)(\eta''(u) + \chi A''(u)).\]
    For $u$ small, this term looks like
    \begin{equation}
        2w\eta(u)(\eta''(u) + \chi A''(u)) \sim -\frac{2w}{(\log u)^2}. \label{log_approx}
    \end{equation}
    Nonlinearities $f$ in~\eqref{FKPP} of the type
    \begin{equation} \label{bouin_nonlinearity}
        f(u) \approx u -\frac{\kappa u}{(\log u)^2}
        \qquad\text{ when } u \ll 1
    \end{equation}
    have been studied by Bouin and Henderson \cite{bouin2020bramsondelayfisherkppequation}. 
    Assuming that \eqref{log_approx} can be understood in the same way as \eqref{bouin_nonlinearity},
    their work suggests that
    $w$ should look, roughly, like
    \begin{equation} \label{intro_pulled_w_guess}
        w\Big( t, x + 2t - \frac{5}{2}\log t\Big) \sim x^2 e^{-x}e^{-\frac{x^2}{Ct}} \quad \text{as } x \to +\infty.
    \end{equation}
    Importantly,  notice the $2t - (\sfrac{5}{2}) \log t$ moving frame in contrast to \Cref{front_location_theorem}.(i).
    In order to capture the quadratic prefactor in \eqref{intro_pulled_w_guess}, the weight $g(x)$
    must also be quadratic as $x \to \infty$.

	\subsection{Organization of the paper}
    In \Cref{pushmi_pullyu_section, pulled_section}, we prove
    \Cref{shape_defect_theorem}.\eqref{i.pushmi-pullyu} and \Cref{shape_defect_theorem}.\eqref{i.pulled} respectively.
    Their proofs are similar in most aspects but diverge greatly in calculating $L^1$-estimates. \Cref{pulled_section}
    also establishes the required properties of the traveling wave profile function $\eta$.
    \Cref{weighted_nash_ineq_proof_section} gives the proof of the weighted Nash inequality: \Cref{weighted_nash_ineq_lemma}.
    The proof is quite simple, but also gives a flexible form of the Nash inequality, which we make use of in
    \Cref{pushmi_pullyu_section,pulled_section}.
    \Cref{front_location_section} contains the proof of the front location up to $O(1)$ precision
    when $\chi \in [0, 1)$. The first part of the section gives a supersolution for the right half line
    and the second part of the section gives a supersolution for the left half line.

    Finally, we prove several miscellaneous results in the appendix.
    In
    \Cref{eta_asympotics}, we compute a higher order asymptotic expansion of the nonlinear term,
    \[\eta(u)(\eta''(u) + \chi A''(u)),\]
    appearing in \eqref{shape_defect_evolution_eqn}.
    \Cref{lower_bound_section} extends the lower bound for $u$
    found in \cite[Proposition 3.1]{shortproof}
    to the positive half line.
    Lastly, in \Cref{derivation}, we give the derivation of the shape defect function evolution equation from \eqref{shape_defect_evolution_eqn}.

	\subsection{Notation} \label{notation_section}
    From \eqref{weighted_nash_ineq}, we see that we are working in weighted $L^1$ spaces. For notational convenience,
    we set $L^1_g$ to be the $L^1$ space weighted by a function $g(x) > 0$. More precisely,
    \begin{equation}\label{weighted_space_def}
        L^1_g = \left\{\text{measurable } \psi: \norm{\psi}_{L^1_g} := \int |\psi| g dx < \infty\right\}.
    \end{equation}
    In addition, due to the adjoint argument outlined in \Cref{intro_adjoint_arg},
    we also make use of the dual of $L^1_g$:
    \begin{equation}\label{weighted_L_infty}
        L^\infty_{1/g} = \left\{\text{measurable } \psi : \norm{\psi}_{L^\infty_{1/g}} := \text{ess sup} \frac{|\psi|}{g} < \infty\right\}.
    \end{equation}

    Throughout this paper we make use of the following notation. The function $\tilde w$ is always
    the shape defect function shifted to the front location, as defined
    in \eqref{front_location_theorem}. Therefore, for $\chi = 1$,
    \begin{equation}
        \tilde w(t, x) = w\left(t, x + 2t - \frac{1}{2}\log t\right), \label{intro_pushmi_pullyu_tilda}
    \end{equation}
	while for $\chi \in [0, 1)$,
    \begin{equation}
        \tilde w(t, x) = w\left(t, x + 2t - \frac{3}{2}\log t\right). \label{intro_pulled_tilda}
    \end{equation}
    Similarly, $\tilde u$ is always the solution to \eqref{main_eq} shifted to the front
    location.
    
    Lastly, the symbol $C$ changes line-by-line, and represents a constant only depending on
    $\chi$, $A$, and $L_0$.

    \section{Convergence in the pushmi-pullyu case ($\chi = 1$)} \label{pushmi_pullyu_section}
    Following the intuition from \Cref{toy_model}, we consider
    \begin{equation}
        \bar w(t, x) = e^x \tilde w(t, x). \label{bar_w_rcl}
    \end{equation}
    The equations that $\bar w$ and $\tilde w$ satisfy are
    \begin{align}
        \tilde w_t &= \tilde w_{xx} + (1-A'(\tilde u))\tilde w - A'(\tilde u)\tilde w_x + \left(2 - \frac{1}{2t}\right)\tilde w_x \quad \text{and} \label{tilde_w_rcl_pde}\\
        \bar w_t &= \bar w_{xx} - A'(\tilde u)\bar w_x - \frac{1}{2t}(\bar w_x - \bar w).\label{bar_w_rcl_pde}
    \end{align}

    Up to shifting by a constant $x_0$, we can assume that
    $1-A'(\tilde u(t, x)) \leq - C < 0$ if $x \leq 0$. Hence, we would like to use \eqref{tilde_w_rcl_pde} since this term has the correct sign for $x \leq 0$.
    For $x > 0$, however, $1-A'(\tilde u)$ becomes positive, which does not help us. On the other hand,
    \eqref{bar_w_rcl_pde} does not have this growth term on the right, but we lose the $1-A'(\tilde u)$ term on the left.
    Therefore, our idea is to consider $h(x)\tilde w(t, x)$, where
    $h$ is constant on the left, $h(x) = e^x$ on the right, and $h$ connects the left and right in a smooth enough way.
    Let 
    \begin{equation}
        h(x) = \begin{cases}
        2e^{-1}  \quad &x \in (-\infty,-1],\\
        e^{1/x} + e^x  \quad &x \in (-1,0),\\
        e^x \quad &x \in [0,\infty).
    \end{cases} \label{interpolating_function}
    \end{equation}
    Note that $h \in C^1(\R)$ and $0 \leq h' \leq h$.
    We make one more simplifying transformation due to the $\frac{\bar w}{2t}$ term in \eqref{bar_w_rcl}. This term implies a growth of $\sqrt t$, so it makes sense to consider
    \begin{equation}
        \hat w(t, x) = \frac{1}{\sqrt t} h(x) \tilde w(t, x).
    \end{equation}

    We state two lemmas which, when combined, prove \Cref{shape_defect_theorem}.\eqref{i.pushmi-pullyu}.  The first gives the $L^2$-decay for $\hat w$.
    \begin{lemma}\label{forward_lemma}
    	Under the assumptions of \Cref{main_result_theorem},
        \begin{equation}
            \norm{\hat w(t)}_{L^2} \leq \frac{C}{t^{3/4}}. \label{pushmi_pullyu_L2_result}
        \end{equation}
    \end{lemma}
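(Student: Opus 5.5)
We would run an $L^2$-energy argument for $\hat w$, closed by the weighted Nash inequality of \Cref{weighted_nash_ineq_lemma} with $\beta=0$ on the right half line. The matching estimate on the left half line comes from the negativity of $1-A'(\tilde u)$ there. The $L^1$ mass is controlled directly from the definition of $w$.

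\textbf{Step 1 (equation for $\hat w$).} From \eqref{tilde_w_rcl_pde}, writing $\hat w = t^{-1/2}h\tilde w$, we get an equation of the form
\[
\hat w_t = \hat w_{xx} + b(t,x)\hat w_x + c(t,x)\hat w .
\]
For $x\ge 0$, using \eqref{bar_w_rcl_pde}, the $\frac{1}{2t}\bar w$ term is exactly cancelled by the $-\frac{1}{2t}$ coming from $t^{-1/2}$. We are left with $c = \frac{A'(\tilde u)}{1}\cdot 0 + O(1/t)$, i.e.\ essentially drift only. For $x\le -1$, $h$ is constant and $c = 1-A'(\tilde u) - \frac{1}{2t} \le -C$. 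On $(-1,0)$ we use $0\le h'\le h$ and the boundedness of $h''/h$ there to bound the extra terms. After the shift by $x_0$ we expect them to be absorbed by the negative reaction term, since $1-A'(\tilde u)\le -C$ on $x\le 0$.

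\textbf{Step 2 (energy identity).} Multiply by $\hat w$ and integrate, integrating the drift term by parts:
\[
\int b\hat w\hat w_x = -\tfrac12\int b_x\hat w^2 ,
\]
where $b_x$ involves $A''(\tilde u)\tilde u_x$, which is bounded. We aim for
\[
\frac12\frac{d}{dt}\norm{\hat w}_{L^2}^2 \le -\frac12\int_{\R}\hat w_x^2 - c_0\int_{-\infty}^0\hat w^2 + \frac{C}{t}\norm{\hat w}_{L^2}^2 .
\]
The contribution $-\frac12 A''(\tilde u)\tilde u_x\hat w^2$ on $x>0$ is harmless: $-\tilde u_x\ge 0$ by steepness, and $A''\tilde u_x$ is $O(e^{-x})$-small. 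A Hardy-type bound via the $H^1$ term, or a localized bound via $\hat w(0)^2\le \epsilon\int\hat w_x^2 + C_\epsilon\int_{-1}^0\hat w^2$, lets the left dissipation absorb it. This is the step I expect to be the main obstacle: the transition region and the positive part of the drift derivative must be absorbed uniformly in $t$.

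\textbf{Step 3 ($L^1$ bound).} As in \Cref{intro_adjoint_arg}, integrating by parts gives
\[
\int_0^\infty e^x\tilde w = \tilde u(t,0) + \int_0^\infty e^x\big(\tilde u-\eta(\tilde u)\big) = \tilde u(t,0) + \int_0^\infty e^x A(\tilde u).
\]
With the pushmi-pullyu upper bound $\tilde u\le C(1+x)e^{-x}e^{-x^2/Ct}$ from the supersolution behind \Cref{front_location_theorem}, and $A(u)\le Cu^2$, this integral is $O(1)$. Hence
\[
\int_0^\infty|\hat w| \le C t^{-1/2}.
\]

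\textbf{Step 4 (closing the ODE).} Apply \eqref{weighted_nash_ineq} with $\beta=0$ on $(0,\infty)$, and use the left dissipation for $x<0$:
\[
\norm{\hat w}_{L^2}^2 \le CL^2\Big(\int\hat w_x^2 + \int_{-\infty}^0\hat w^2\Big) + \frac{C}{Lt} .
\]
Choosing $L$ optimally, $y=\norm{\hat w}_{L^2}^2$ satisfies
\[
y' \le -c\,y^3t^2 + \frac{C}{t}\,y .
\]
The $C/t$ term is dealt with by either making its constant small (via the shift $x_0$ and large times) or using the integrating factor $t^{-C}$. A comparison argument then gives $y\lesssim t^{-3/2}$: the balance $y^3t^2\sim y/t$ yields $y\sim t^{-3/2}$. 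This gives $\norm{\hat w}_{L^2}\le Ct^{-3/4}$, for $t\ge1$, with the initial time handled by parabolic regularity.
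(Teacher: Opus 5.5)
Your overall plan matches the paper's: an $L^2$-energy estimate for $\hat w$, a Nash inequality, the $L^1$ bound via $\int e^x\tilde w\,dx=\int e^xA(\tilde u)\,dx$, and the differential inequality $y'\le -ct^2y^3$. Steps 3 and 4 are essentially fine. A $\frac{C}{t}y$ error would only change constants, since $z=t^{3/2}y$ satisfies $z'\le t^{-1}(C'z-cz^3)$. The gap is Step 2. It is the entire content of the lemma, you leave it as an expectation, and both mechanisms you propose for it are wrong.

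\emph{The right half-line.} On $x>0$ the drift is $b=-(A'(\tilde u)+\frac{1}{2t})$, so
$-\frac12\int b_x\hat w^2\,dx=+\frac12\int A''(\tilde u)\tilde u_x\hat w^2\,dx\le 0$.
You have the sign backwards, and there is nothing to absorb there.

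\emph{The transition region $(-1,0)$.} Crude absorption cannot work here. Set $q:=h'/h\in[0,1]$. The terms generated by $h$ are $q^2-2q+A'(\tilde u)q$. At $x=0$, where $q=1$, they equal $A'(\tilde u)-1$, so they cancel the reaction term $1-A'(\tilde u)$ \emph{exactly}, whatever shift $x_0$ you choose. There is no room to spare. Bounding these terms by a constant (via $h''/h$ bounded) and absorbing them into $1-A'\le -C$ therefore fails.

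The missing idea is an exact computation. After integrating by parts, $h''$ drops out and the zeroth-order coefficient factors:
\[
\frac{d}{dt}\int\hat w^2\,dx=-2\int\hat w_x^2\,dx+\int\Big[A''(\tilde u)\tilde u_x+2(1-q)\big(1-A'(\tilde u)-q\big)-\frac{1}{t}(1-q)\Big]\hat w^2\,dx .
\]
Every term in the bracket is nonpositive:
\begin{itemize}
\item $A''\ge 0$, and $\tilde u_x\le 0$ by steepness;
\item $0\le q\le 1$, so the $\frac1t$ term is nonpositive;
\item $(1-q)(1-A'-q)\le(1-q)(1-A')$, which vanishes for $x\ge 0$ (where $q=1$) and is $\le 0$ for $x\le 0$ (where $1-A'(\tilde u)\le 0$).
\end{itemize}
This is the paper's bound $S\le 2h(h-h')(1-A'(\tilde u))$. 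It yields $\frac{d}{dt}\|\hat w\|_{L^2}^2\le -2\|\hat w_x\|_{L^2}^2$ with no $C/t$ term at all.

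Two smaller points. This identity does not give your $-c_0\int_{-\infty}^0\hat w^2$, because its coefficient $(1-q)(A'-1)$ vanishes at $x=0$. Also, your half-line Nash step implicitly needs $L\gtrsim 1$. Neither matters. You can apply the classical Nash inequality on all of $\R$, as the paper does, because
$\int_{-\infty}^0\hat w\,dx\le t^{-1/2}\int\tilde w\,dx\le t^{-1/2}$.
This holds since $h\le 1$ on $(-\infty,0]$ and $\int\tilde w\,dx\le\int(-\tilde u_x)\,dx=1$. The left-side dissipation is then not needed.
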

    The proof of \Cref{forward_lemma} can be found in \Cref{pushmi_pullyu_L2_decay_section}.
    The second lemma gives a weighted $L^2 \to L^\infty$-estimate.  For this, we define the weight
    \begin{equation}
        g(x) = \begin{cases}
        h(x) \quad &x \in (-\infty, 0)\\
        x+1 \quad &x \in [0, \infty)
    \end{cases} = \begin{cases}
        2e^{-1} \quad &x \in (-\infty, -1]\\
        e^x + e^{-1/x} \quad &x \in (-1, -0)\\
        x+1 \quad &x \in [0, \infty).
    \end{cases} \label{pushmi_pullyu_weight}
    \end{equation}
    The weight $g \in C^1(\R)$ and will be used in the weighted Nash inequality. It leads to the following $L^\infty$-bound.
    \begin{lemma} \label{adjoint_lemma}
        Let $0 \leq s < t$. Then, under the assumptions of \Cref{main_result_theorem},
        \begin{equation}
            \norm{\hat w(t)}_{L^\infty_{1/g}} \leq \frac{C\norm{\hat w(s)}_{L^2}}{(t-s)^{3/4}}. \label{pushmi_pullyu_Linfinity_result}
        \end{equation}
    \end{lemma}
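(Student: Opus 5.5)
The plan is a duality argument. Writing the equation for $\hat w$ as a linear equation with coefficients depending on the (given) solution $\tilde u$, we obtain an $L^1_g\to L^2$ bound with decay $(t-s)^{-3/4}$ for the formal adjoint problem. We then transfer it to an $L^2\to L^\infty_{1/g}$ bound for the forward problem.

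\textbf{Step 1: the linear equation for $\hat w$ and its adjoint.} Since $\chi=1$, the nonlinear term in \eqref{shape_defect_evolution_eqn} vanishes ($\eta''+A''=0$). Hence, from \eqref{tilde_w_rcl_pde}, $\hat w = t^{-1/2}h\tilde w$ solves
\[
\hat w_t = \hat w_{xx} + b\,\hat w_x + c\,\hat w ,
\]
where $b,c$ are explicit in terms of $A'(\tilde u)$, $h'/h$, $h''/h$ and $1/t$. On $[0,\infty)$, where $h=e^x$, \eqref{bar_w_rcl_pde} shows the $\hat w/(2t)$ terms cancel, leaving $\hat w_t=\hat w_{xx}-(A'(\tilde u)+\tfrac1{2t})\hat w_x$. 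On $(-\infty,-1]$ we have $c=1-A'(\tilde u)-\frac1{2t}\le -\delta<0$ after the shift by $x_0$. On $(-1,0)$ the bounds $0\le h'\le h$ and $|h''|\lesssim h$ keep all coefficients bounded there. The negativity of $1-A'(\tilde u)$ on $x\le 0$ should absorb the extra terms; if not, enlarge $x_0$ to move the interpolation region further behind the front.

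Fix $\psi$ (smooth, compactly supported, nonnegative without loss of generality by splitting $\psi=\psi^+-\psi^-$). Let $v$ solve the backward adjoint problem
\[
-v_\tau = v_{xx} - (bv)_x + cv, \qquad \tau\in(s,t),\quad v(t)=\psi .
\]
Then $\frac{d}{d\tau}\int \hat w\,v=0$, so
\[
\Big|\int \hat w(t)\psi\Big| \le \|\hat w(s)\|_{L^2}\|v(s)\|_{L^2}.
\]
Taking the supremum over $\psi$ with $\|\psi\|_{L^1_g}\le 1$, the lemma reduces to showing $\|v(s)\|_{L^2}\le C(t-s)^{-3/4}\|\psi\|_{L^1_g}$.

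\textbf{Step 2: $L^1_g$ bound for $v$.} By the comparison principle $v\ge0$. We compute $\frac{d}{d(-\tau)}\int vg = \int v\,(g''+bg'+cg)$.

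On $[0,\infty)$, where $g=x+1$, this integrand equals $-(A'(\tilde u)+\frac1{2t})v\le 0$; linearity of $g$ is exactly what kills the Laplacian contribution. On $x\le -1$, $g$ is constant and $c<0$. On $(-1,0)$, $g=h$, so the integrand is bounded by the same combination that is controlled by the negative term there.

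This gives $\|v(\tau)\|_{L^1_g}\le C\|\psi\|_{L^1_g}$ uniformly for $\tau\in[s,t]$. I expect this step, specifically the bookkeeping in the interpolation region $(-1,0)$ together with the $1/(2t)$ and $\log t$-shift contributions, to be the main obstacle.

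\textbf{Step 3: $L^2$ energy with the weighted Nash inequality.} Multiplying by $v$ and integrating gives
\[
\frac{d}{d(-\tau)}\tfrac12\int v^2 = -\int v_x^2 + \int \big(c+\tfrac12 b_x\big)v^2 .
\]
On the right half-line, $b_x=-A''(\tilde u)\tilde u_x\le 0$. This uses $A''\ge0$, which follows from $A=u\alpha$ with $\alpha$ increasing and convex, and $\tilde u_x\le0$, which follows from steepness. On $x<0$ the zero-order term is $\le -\delta$, up to the bounded interpolation region, which is handled as in Step 2.

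Next apply \Cref{weighted_nash_ineq_lemma} with $\beta=1$ to $v$ on $\R_+$ with $M=\|v\|_{L^1_g}$, and combine with the $-\delta\int_{x<0}v^2$ dissipation. With $X=\int v^2$ and $L$ large this yields
\[
-\frac{dX}{d\tau} \le -\frac{c}{L^2}X + \frac{C}{L^5}M^2 .
\]
Choosing $L^3\sim M^2/X$ gives $-X' \le -cX^{5/3}M^{-4/3}$. Integrating from $\tau=t$ down to $\tau=s$ gives $X(s)\le CM^2(t-s)^{-3/2}$, that is, $\|v(s)\|_{L^2}\le C(t-s)^{-3/4}\|\psi\|_{L^1_g}$.

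\textbf{Step 4: conclusion.} Combining with Step 1 gives \eqref{pushmi_pullyu_Linfinity_result}, since $L^\infty_{1/g}$ is the dual of $L^1_g$. The duality pairing is justified via truncation of $\psi$ and the decay of $\hat w$ as $x\to\pm\infty$.
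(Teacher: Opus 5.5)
Your strategy is the paper's: duality with the adjoint flow, monotonicity of $\int vg$ (the linear weight kills the diffusion term on $x\ge0$), an $L^2$ energy estimate closed with \Cref{weighted_nash_ineq_lemma} for $\beta=1$, and integration of $-X'\le -cX^{5/3}M^{-4/3}$. Posing the adjoint as a backward problem on $(s,t)$ with terminal datum $\psi$ is the right way to run the duality when coefficients depend on time. The paper instead speaks of a forward adjoint solution operator, which is less precise. Your Step 2 is fine, and the interpolation region is harmless there. Since $g=h$ on $x<0$ and $\hat w=t^{-1/2}h\tilde w$, one gets exactly $g''+bg'+cg=(1-A'(\tilde u)-\frac{1}{2t})h\le 0$ on $(-1,0)$.

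The step that fails as written is in Step 3. You derive $-X'\le -\frac{c}{L^2}X+\frac{C}{L^5}M^2$ only for $L$ large, so that $-\delta\int_{x<0}v^2$ can absorb $-\frac{c}{L^2}\int_{x<0}v^2$. But the optimizing choice $L^3\sim M^2/X$ is small precisely when $v$ is concentrated, e.g.\ near $\tau=t$ whenever $\norm{\psi}_{L^2}^2\gg\norm{\psi}_{L^1_g}^2$. In that regime the large-$L$ inequality gives only exponential decay at a fixed rate. It therefore yields no bound on $X(s)$ uniform over $\norm{\psi}_{L^1_g}\le 1$, and that uniform $L^1_g\to L^2$ smoothing is exactly what you need.

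The paper covers $L<1$ with the classical Nash inequality on all of $\R$ (the case $\beta=0$): $-\int v_x^2\le -\frac{C_1}{L^2}\int v^2+\frac{C_2}{L^3}(\int v)^2$. Then $g\ge 2e^{-1}$ and $L^{-3}\le L^{-5}$ give the same dissipation inequality for every $L>0$. This branch needs only that all zero-order terms are nonpositive.

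Two smaller corrections.
\begin{enumerate}[(i)]
\item The energy identity is $\frac{d}{d(-\tau)}\frac12\int v^2=-\int v_x^2+\int (c-\frac12 b_x)v^2$, and on $x\ge0$ one has $b_x=-A''(\tilde u)\tilde u_x\ge 0$. Your two sign slips cancel, so the conclusion $c-\frac12 b_x=\frac12 A''(\tilde u)\tilde u_x\le 0$ there is correct.
\item On $(-1,0)$ the zero-order coefficient is $(1-\frac{h'}{h})(1-\frac{h'}{h}-A'(\tilde u)-\frac{1}{2t})+\frac12 A''(\tilde u)\tilde u_x$. This is nonpositive but tends to $0$ as $x\to 0^-$, so a uniform $-\delta\int_{x<0}v^2$ is not available. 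Enlarging $x_0$ does not change this, and the paper glosses over the same point. The fix is to apply the weighted Nash inequality on $[-\frac12,\infty)$, where $g\ge c\,(x+\frac12)$, and to use $1-\frac{h'}{h}\ge c_0>0$ on $(-\infty,-\frac12]$.
\end{enumerate}
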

    The proof of \Cref{adjoint_lemma} can be found in \Cref{pushmi_pullyu_adjoint_section}.
    Before proving \Cref{forward_lemma, adjoint_lemma}, we show how they yield \eqref{pushmi_pullyu_w_decay} in \Cref{shape_defect_theorem}.

            \begin{proof}[Proof of {\Cref{shape_defect_theorem}}.\eqref{i.pushmi-pullyu}]
                Using \Cref{adjoint_lemma} with $s = t/2$ yields
                \[\norm{\hat w(t)}_{L^\infty_{1/g}} \leq \frac{C}{(t/2)^{3/4}} \norm{\hat w(t/2)}_{L^2}.\]
                Next we apply \Cref{forward_lemma} to obtain
                \[\norm{\hat w(t, \cdot)}_{L^\infty_{1/g}} \leq \frac{C}{(t/2)^{3/4}} \frac{C}{(t/2)^{3/4}} = \frac{C}{t^{3/2}}.\]
                Recall that $\hat w(t, x) = \frac{1}{\sqrt t} h(x)\tilde w(t, x)$. Thus,
                \[\left|\frac{h(x) \tilde w(t, x)}{g(x)}\right| \leq \frac{C}{t}.\]
                Using the definitions of $g(x)$ and $h(x)$, \eqref{interpolating_function} and \eqref{pushmi_pullyu_weight} respectively, completes the proof of
                \Cref{shape_defect_theorem}.\eqref{i.pushmi-pullyu}.
            \end{proof}

    \subsection{$L^2$-estimates: proof of \Cref{forward_lemma}} \label{pushmi_pullyu_L2_decay_section}
    \begin{proof}
    In the upcoming proof, our goal is to manipulate $\int (\hat w^2)_t dx$ so that we may apply the classical Nash inequality. Then
    we estimate the $L^1$-norm and finally solve a differential inequality for $\int \hat w^2 dx$.

    We calculate how the $L^2$-norm changes in time:
    \begin{align*}
        \frac{d}{dt}\int \hat w^2 dx &= \frac{d}{dt}\left[ \frac{1}{t}\int (\tilde w h(x))^2 dx \right]\\
        &= -\frac{1}{t^2}\int (\tilde w h(x))^2 dx + \frac{2}{t}\int \tilde w h(x)^2\left[\tilde w_{xx} + (1-A'(\tilde u))\tilde w - A'(\tilde u)\tilde w_x + 2\tilde w_x - \frac{1}{2t}\tilde w_x\right]dx\\
        &=-\frac{1}{t}\int \hat w^2 dx + \frac{1}{t}\int 2\tilde w h(x)^2\left[\tilde w_{xx} + (1-A'(\tilde u))\tilde w - A'(\tilde u)\tilde w_x + 2\tilde w_x - \frac{1}{2t}\tilde w_x\right]dx\\
        &=: -\frac{1}{t}\int \hat w^2 dx + \frac{1}{t}I.
    \end{align*}
    Note that we have defined $I$ implicitly in the last equality.  We need to understand $I$ now.  To do so, we integrate by parts to find
    \begin{align*}
        I = -2&\int \tilde w_x^2 h(x)^2 dx - 4\int \tilde w \tilde w_x h(x) h'(x)dx +2 \int \tilde w^2(1-A'(\tilde u))h(x)^2dx\\
         &- \int(\tilde w^2)_xA'(\tilde u)h(x)^2dx + 2\int(\tilde w^2)_x h(x)^2dx - \frac{1}{2t}\int (\tilde w^2)_x h(x)^2dx.
    \end{align*}
    First note that \[(\tilde wh(x))_x^2 = (\tilde w_x h(x) + \tilde w h'(x))^2 = \tilde w_x^2 h(x)^2 + 2\tilde w_x \tilde w h(x) h'(x) + \tilde w^2 h'(x)^2.\]
    Therefore, we can write $I$ as
    \begin{align*}
        I = -2\int (\tilde w h(x))_x^2 dx + 2&\int \tilde w ^2h'(x)^2dx +2 \int \tilde w^2(1-A'(\tilde u))h(x)^2dx\\
         & +\int\tilde w^2\left[A''(\tilde u)\tilde u_x h(x)^2 + 2A'(\tilde u)h(x)h'(x)\right]dx\\
         &- 4\int\tilde w^2 h(x)h'(x)dx + \frac{1}{t}\int \tilde w^2 h(x)h'(x)dx.
    \end{align*}
    The previous expression for $I$ might look daunting, but consider the case when $x \leq -1$. Since $h(x)$ is constant in this region,
    all the terms with $h'(x)$ vanish and we are left with
    \begin{equation}\label{pushmi_pullyu_left}
        -2\int_{-\infty}^{-1} (\tilde w h(x))_x^2 dx +2 \int_{-\infty}^{-1} \tilde w^2(1-A'(\tilde u))h(x)^2dx +\int_{-\infty}^{-1}\tilde w^2A''(\tilde u)\tilde u_x h(x)^2 \leq -2\int_{-\infty}^{-1} (\tilde w h(x))_x^2 dx.
    \end{equation}
    In the previous inequality we used that $1-A'(\tilde u(t, x)) \leq 0$ for $x \leq 0$ and that $\tilde u_x \leq 0$.
    On the other hand, if $x \geq 0$, then $h(x) = e^x = h'(x)$. So on the right, $I$ becomes
    \begin{equation}\label{pushmi_pullyu_right}
        -2\int_0^\infty (\tilde w h(x))_x^2 dx  + \int_0^\infty \hat w^2 A''(\tilde u) \tilde u_x dx + \frac{1}{t}\int_0^\infty \hat w^2 dx \leq  -2\int_0^\infty (\tilde w h(x))_x^2 dx + \frac{1}{t}\int_0^\infty \hat w^2 dx.
    \end{equation}
    Therefore, we have nice cancellation for $x \notin (-1, 0)$. The tricky part is determining whether a similar cancellation occurs over $(-1, 0)$.

    Consider all but the first and last integrands in $I$ and set
    \[S:=2h'(x)^2 + 2(1-A'(\tilde u))h(x)^2 + A''(\tilde u)\tilde u_x h(x)^2 + 2A'(\tilde u)h(x)h'(x) - 4h(x)h'(x).\]
    Since $0 \leq h' \leq h$, we have
    \[
    2h'(x)^2 - 4h(x)h'(x) \leq -2 h(x)h'(x).
    \] 
    In addition $A''(\tilde u)u_x h(x)^2 \leq 0$, so we are left with
    \begin{equation}\label{rcl_integrand_ineq}
        \begin{split}
            S &\leq -2h(x)h'(x) + 2(1-A'(\tilde u))h(x)^2 + 2A'(\tilde u)h(x)h'(x)\\
                &= 2h(x)[-h'(x) + (1-A'(\tilde u))h(x) + A'(\tilde u)h'(x)]\\
                &=2h(x)[(1-A'(\tilde u))h(x) + h'(x)(A'(\tilde u)-1)]\\
                &= 2h(x)(h(x)-h'(x))[1-A'(\tilde u)].
        \end{split}
    \end{equation}
    Once again using that $1-A(\tilde u) \leq 0$ for $x \leq 0$ and $h' \leq h$, we find that $S \leq 0$ on $(-1, 0)$.
    We already saw from \eqref{pushmi_pullyu_left} and \eqref{pushmi_pullyu_right} that we have
    favorable bounds in the complement of $(0, 1)$, leaving us with
    \begin{align*}
        \frac{d}{dt} \int \hat w^2 dx &\leq -\frac{1}{t}\int \hat w^2 dx -2\int \hat w_x^2 dx + \frac{1}{t}\int_0^\infty \hat w^2 dx\\
        &\leq -2\int \hat w_x^2 dx.
    \end{align*}
    We have reached a point at which we can apply the classical Nash inequality to obtain
    \begin{equation}
        \frac{d}{dt}\int \hat w^2dx \leq -C\frac{\left(\int \hat w^2dx\right)^3}{\left(\int \hat w dx\right)^4}. \label{pushmi_pullyu_dissipation}
    \end{equation}

    The next step is to find a bound on $\norm{\hat w(t)}_{L^1}$. Then, we can solve the differential inequality for $\norm{\hat w(t)}_{L^2}^2$.
    We calculate the $L^1$-norm directly:
    \begin{equation}\label{pushmi_pullyu_direct_L1}
        \int \hat w dx \leq \frac{1}{\sqrt t}\int (1+e^x)\tilde wdx = \frac{1}{\sqrt t}\int \tilde w dx + \frac{1}{\sqrt t}\int e^x \tilde w dx.
    \end{equation}
    The first term can be easily bounded
    using the definition \eqref{shape_defect_function} of the shape defect function,
    integrating by parts,
    and recalling that $\tilde u(-\infty)=1$ and $\tilde u(+\infty) = 0$. Thus,
    \begin{equation}\label{pushmi_pullyu_L1_easy}
        \int \tilde wdx = \int \left[-\tilde u_x - (\tilde u-A(\tilde u))\right]dx \leq -\int \tilde u_x dx= 1.
    \end{equation}
    For the second term in \eqref{pushmi_pullyu_direct_L1}, since we have shifted to the front,
    \begin{equation}\label{pushmi_pullyu_L1_hard}
        \int e^x \tilde wdx = \int e^x(-\tilde u_x - (\tilde u-A(\tilde u)))dx = \int e^x A(\tilde u)dx \leq C.
    \end{equation}
    Therefore inserting the bounds \eqref{pushmi_pullyu_L1_easy} and \eqref{pushmi_pullyu_L1_hard} into \eqref{pushmi_pullyu_direct_L1}, we obtain
    \[\int \hat w dx \leq \frac{C}{\sqrt t}.\]
    Using this inequality in \eqref{pushmi_pullyu_dissipation}, we have
    \begin{equation}\label{pushmi_pullyu_can_solve}
        \frac{d}{dt}\int \hat w^2dx \leq -Ct^2\left(\int \hat w^2dx\right)^3.
    \end{equation}
    Solving \eqref{pushmi_pullyu_can_solve} for $\norm{\hat w(t)}_{L^2}$ yields
    \[\int \hat w^2 dx \leq \frac{C}{t^{3/2}},\]
    which completes the proof.
    \end{proof}

    \subsection{$L^2 \to L^\infty$-estimates: proof of \Cref{adjoint_lemma}} \label{pushmi_pullyu_adjoint_section}
            We now prove \Cref{adjoint_lemma}. The key step in the upcoming
            proof is how we make use of the weighted Nash inequality \eqref{weighted_nash_ineq}.

            \begin{proof}[Proof of {\Cref{adjoint_lemma}}]
                To find the correct $L^\infty$-decay, we employ
                the adjoint argument described in \Cref{intro_adjoint_arg}.
                First we compute the adjoint equation. Let $v$ be a test function, consider $\int v w_t dx$, and integrate by parts:
                \begin{align*}
                    \int v \hat w_t dx &= \int v h(x)\frac{1}{\sqrt{t}}\left[-\frac{\tilde w}{2t} + \tilde w_{xx} + (1-A'(\tilde u))\tilde w - A'(\tilde u)\tilde w_x + 2\tilde w_x - \frac{1}{2t}\tilde w_x\right]dx\\
                    &= \int \left[-\frac{vh}{2t} + (vh)_{xx} + (1-A'(\tilde u))vh + (A'(\tilde u) vh)_x -2(vh)_x + \frac{1}{2t}(vh)_x\right]\frac{\tilde w}{\sqrt{t}}dx\\
                    &= \int \frac{1}{h}\left[-\frac{vh}{2t} + (vh)_{xx} + (1-A'(\tilde u))vh + (A'(\tilde u) vh)_x -2(vh)_x + \frac{1}{2t}(vh)_x\right]\hat wdx.
                \end{align*}
                In the last equality we multiplied and divided by $h(x)$ and used that $\hat w(t, x) = \frac{h(x)\tilde w(t, x)}{\sqrt t}$.
                Therefore, the adjoint equation is given by
                \begin{equation}\label{push_pull_rcl_adjoint_pde}
                    v_t = -\frac{v}{2t} + \frac{(vh)_{xx}}{h} + (1-A'(\tilde u))v + \frac{(A'(\tilde u)v h)_x}{h}-\frac{(2vh)_x}{h} + \frac{1}{2t} \frac{(vh)_x}{h}.
                \end{equation}
                Our goal is to prove an
                $L^1_g \to L^2$ bound for \eqref{push_pull_rcl_adjoint_pde}. Similar to the proof of \Cref{forward_lemma}, we aim to solve
                a differential inequality involving $\norm{v}_{L^2}$. The difference is that we use a weighted Nash inequality
                with the weight defined in \eqref{pushmi_pullyu_weight}.

                Let $v$ be the solution to \eqref{push_pull_rcl_adjoint_pde} with initial data $v_0$.  We compute how the $L^2$-norm changes:
                \begin{align*}
                    \frac{d}{dt} \int v^2 dx &= 2 \int v v_tdx\\
                    &= 2\int v\left[-\frac{v}{2t} + \frac{(vh)_{xx}}{h} + (1-A'(\tilde u))v + \frac{(A'(\tilde u)v h)_x}{h}-\frac{(2vh)_x}{h} + \frac{1}{2t} \frac{(vh)_x}{h}\right]dx.
                \end{align*}
                After integrating by parts, we find
                \begin{alignat*}{2}
                    2&\int v \frac{(vh)_{xx}}{h}dx &&= -2\int v_x^2 dx + 2\int \frac{v^2 h'(x)^2}{h^2}dx,\\
                    2&\int \frac{v}{h}(A'(\tilde u) hv)_x dx &&= \int v^2 A''(\tilde u)\tilde u_x dx + 2\int v^2 A'(\tilde u)\frac{h'}{h}dx,\\
                    2\left(-2 + \frac{1}{2t}\right)&\int \frac{v}{h}(vh)_xdx &&= -2\left(-2 + \frac{1}{2t}\right)\int v^2 \frac{h'}{h}dx.
                \end{alignat*}
                Combining the previous three calculation, we arrive at
                \begin{equation}\label{pushmi_pullyu_adjoint_ibp}
                    \begin{split}
                        \frac{d}{dt} \int v^2 dx = -\frac{1}{t}\int v^2dx -2&\int v_x^2 dx + \frac{1}{t}\int v^2 \frac{h'}{h}dx\\
                    &+ \int v^2 \left[\frac{2h'(x)^2}{h^2} + 2(1-A'(\tilde u)) + A''(\tilde u)\tilde u_x + \frac{2A'(\tilde u)h'}{h} - \frac{4h'}{h}\right]dx.
                    \end{split}
                \end{equation}
                The integrand of the last integral in \eqref{pushmi_pullyu_adjoint_ibp} can be rewritten as
                \[\frac{v^2}{h(x)^2}\left[2h'(x)^2 + 2(1-A'(\tilde u))h(x)^2 + A''(\tilde u)\tilde u_x h(x)^2 + 2A'(\tilde u)h'(x)h(x) - 4h'(x)h(x)\right].\]
                Notice the expression inside the brackets is exactly \eqref{rcl_integrand_ineq}, which we showed is nonpositive. In addition,
                since $h' \leq h$, we have
                \begin{align*}
                    \frac{d}{dt}\int v^2 dx &\leq -2\int v_x^2 dx + 2\int_{-\infty}^{0} v^2(1-A'(\tilde u))dx.
                \end{align*}
                Since $\tilde u$ has is shifted to the front, there exists a $C > 0$, so that $1-A'(\tilde u(t, x)) \leq - C$ for all $x \leq 0$.
                \[\frac{d}{dt}\int v^2 dx \leq -2\int v_x^2 dx -C\int_{-\infty}^{0} v^2 dx.\]

                We now use the weighted Nash inequality with the weight given by \eqref{pushmi_pullyu_weight}. There exists $C_1, C_2 > 0$
                such that for any $L > 0$ both of the following hold:
                \begin{equation}
                    -\int v^2_x dx \leq {\begin{cases}
                    \displaystyle
                    \frac{-C_1}{L^2}\int_{0}^\infty v^2 dx + \frac{C_2}{L^5}\left(\int_{0}^\infty (x+1)v dx\right)^2\\
                    \displaystyle
                    \frac{-C_1}{L^2} \int v^2 dx + \frac{C_2}{L^3}\left(\int v dx\right)^2.
                \end{cases}} \label{weighted_nash_1}
                \end{equation}
                The first case in \eqref{weighted_nash_1} comes from the weighted Nash inequality with $\beta = 1$. The second case
                is really just the classical Nash inequality and
                can be obtained from \eqref{weighted_nash_ineq} by choosing $\beta = 0$.
                If $L \geq 1$, the first case gives us
                \begin{equation}
                    \begin{split}
                        \frac{d}{dt}\int v^2 dx &\leq - 2\int v^2_x dx - C\int_{-\infty}^{0} v^2 dx \\
                    &\leq \frac{-C_1}{L^2}\int_{0}^\infty v^2 dx + \frac{C_2}{L^5}\left(\int_{0}^\infty v(x+1)dx\right)^2 - C\int_{-\infty}^{0} v^2 dx\\
                    &\leq \frac{-C_1}{L^2}\int v^2 dx + \frac{C_2}{L^5}\left(\int vg(x)dx\right)^2.
                    \end{split}
                \end{equation}
                If $0 < L < 1$, the second case gives us
                \begin{equation}\label{pushmi_pullyu_adjoint_dissipation}
                    \begin{split}
                        \frac{d}{dt}\int v^2 dx &\leq -2\int v_x^2 dx \leq \frac{-C_1}{L^2}\int v^2 dx + \frac{C_2}{L^3}\left(\int vdx\right)^2\\
                        &\leq \frac{-C_1}{L^2}\int v^2 dx + \frac{C_2}{L^5}\left(\int vg(x)dx\right)^2.
                    \end{split}
                \end{equation}
                Therefore, we get the same dissipation inequality in either case. Inequality \eqref{pushmi_pullyu_adjoint_dissipation} holds true for any $L > 0$. 
                In particular, we can choose $L$ such that
                \[\frac{1}{L^2}\int v^2dx = \frac{B}{L^5}\left(\int vg(x)dx\right)^2,\]
                where $B > 0$ is to be chosen later.
                Solving for $L$, we have
                \begin{equation}
                    L = \frac{B^{1/3}\left(\int v g(x)dx\right)^{2/3}}{\left(\int v^2 dx\right)^{1/3}}. \label{pushmi_pullyu_chosen_L}
                \end{equation}
                Using \eqref{pushmi_pullyu_chosen_L} in \eqref{pushmi_pullyu_adjoint_dissipation}, we obtain
                \[\frac{d}{dt}\int v^2 dx \leq \frac{\left(\int v^2 dx\right)^{5/3}}{\left(\int v g(x)dx\right)^{4/3}}\left(\frac{-C_1}{B^{2/3}} + \frac{C_2}{B^{5/3}}\right).\]
                Finally take $B > \frac{C_2}{C_1}$ to find
                \begin{equation}
                    \frac{d}{dt}\int v^2 dx \leq -C \frac{\left(\int v^2 dx\right)^{5/3}}{\left(\int v g(x)dx\right)^{4/3}}. \label{push_pull_rcl_dissipation}
                \end{equation}

               To complete the proof, we need a precise estimate on
                $\int v g(x)dx$. We calculate
                \begin{equation}\label{pushmi_pullyu_adjoint_L1_ibp}
                    \begin{split}
                        \frac{d}{dt}\int v g(x)dx = -\frac{1}{2t}\int vg(x)dx + &\int \frac{(v h)_{xx}}{h} g dx + \int v(1-A'(\tilde u))g dx + \int \frac{(A'(\tilde u) hv)_x}{h} gdx\\
                                &-2 \int \frac{(vh)_x}{h} g dx + \frac{1}{2t}\int \frac{(vh)_x}{h}g dx.
                    \end{split}
                \end{equation}
                After integrating by parts, we find
                \begin{alignat*}{2}
                    &\int \frac{(v h)_{xx}}{h} g dx &&= \int v \left(g'' - \frac{h''}{h} g + 2\frac{h'(x)^2}{h^2}g - 2\frac{h'}{h}g'\right)dx,\\
                    &\int \frac{(A'(\tilde u) hv)_x}{h} gdx &&= -\int vA'(\tilde u) \left(g' -\frac{h'}{h}g\right)dx,\\
                    \left(-2 + \frac{1}{2t}\right)&\int \frac{(vh)_x}{h} g dx &&=  \left(2-\frac{1}{2t}\right) \int v \left(g' - \frac{h'}{h}g\right)dx.
                \end{alignat*}
                Recall that on $(-\infty, 0)$, $h(x) = g(x)$, while on $[0, \infty)$, $h(x) = e^x$ and $g(x) = x+1$. Therefore,
                \begin{alignat*}{3}
                    &\int \frac{(v h)_{xx}}{h} g dx &&= \int_0^\infty v \left(0 -(x+1) + 2(x+1) - 2\right)dx &&= \int_0^\infty v \left(x-1\right)dx,\\
                    &\int \frac{(A'(\tilde u) hv)_x}{h} gdx &&= -\int_0^\infty v A'(\tilde u) (1-(x+1)) dx &&= \int_0^\infty v A'(\tilde u) x dx,\\
                    \left(-2 + \frac{1}{2t}\right)&\int \frac{(vh)_x}{h} g dx &&= \left(2-\frac{1}{2t}\right) \int_0^\infty v\left(1 - (x+1)\right)dx &&= \left(-2+\frac{1}{2t}\right) \int_0^\infty v x dx.
                \end{alignat*}
                Using the previous calculations in \eqref{pushmi_pullyu_adjoint_L1_ibp} along with $1-A'(\tilde u) \leq 0$ for $x \leq 0$, we have
                \begin{align*}
                    \frac{d}{dt}\int v g(x)dx &\leq -\frac{1}{2t}\int v(x+1)dx - \int_0^\infty v(x+1)dx + \int_0^\infty v(1-A'(\tilde u))(x+1)dx\\
                    & \hspace*{4cm}+ \int_0^\infty v A'(\tilde u)x dx+ \frac{1}{2t}\int v xdx\\
                    &\leq -\int_0^\infty vA'(\tilde u)xdx \leq 0
                \end{align*}
                This implies that
                \begin{equation}
                    \int v(t, x) g(x)dx \leq \int v_0 g(x)dx. \label{adjoint_pushmi_pullyu_L1}
                \end{equation}

                Now we use \eqref{adjoint_pushmi_pullyu_L1} in \eqref{push_pull_rcl_dissipation}
                to find
                \[\frac{d}{dt}\int v^2 dx \leq -C \frac{\left(\int v^2 dx\right)^{5/3}}{\left(\int v_0 g(x)dx\right)^{4/3}}.\]
                Finally, we solve this differential inequality to obtain
                \begin{equation}
                    \int v^2 dx \leq \frac{C \norm{v_0}_{L^1_g}^2}{t^{3/2}}. \label{pushmi_pullyu_L2_bound}
                \end{equation}

                The estimate \eqref{pushmi_pullyu_L2_bound} means that the solution operator $\tilde S_t: L^1_g \to L^2$
                associated with the formal adjoint equation \eqref{push_pull_rcl_adjoint_pde}
                has bound
                \begin{equation*}
                    \norm{\tilde S_t}_{L^1_g \to L^2} \leq \frac{C}{t^{3/4}}.
                \end{equation*}
                Hence, the adjoint operator, $\tilde S_t^*: L^2 \to L^\infty_{1/g}$ also has the bound
                \begin{equation}
                    \norm{\tilde S_t^*}_{L^1_g \to L^2} \leq \frac{C}{t^{3/4}}. \label{pushmi_pullyu_adjoint_bound}
                \end{equation}
                Let $0 \leq s < t$. Since $\tilde S_t^*$  is a solution operator for $\hat w$, we can write
                \[\hat w(t, \cdot) = \tilde S_{t-s}^* \hat w(s, \cdot).\]
                By \eqref{pushmi_pullyu_adjoint_bound},
                \[\norm{\hat w(t)}_{L^\infty_{1/g}} = \norm{\tilde S_{t-s}^* \hat w(s)}_{L^\infty_{1/g}} \leq \frac{C\norm{\hat w(s)}_{L^2}}{(t-s)^{3/4}},\]
                which completes the proof.
            \end{proof}

    \section{Convergence in the pulled case ($0 \leq \chi < 1$)} \label{pulled_section}
    When $0 \leq \chi < 1$, the calculations become more tedious, but the main ideas are
    analogous to the $\chi = 1$ case. We first find the $L^2$-decay of the shape defect function and then
    use an adjoint argument to get a weighted $L^2 \to L^\infty_{1/g}$-bound on the shape defect function.

    The shape defect function satisfies the following PDE:
    \begin{equation}
        w_t = w_{xx} - \chi A'(u)w_x + (1- A'(u))w + 2w(w+\eta(u))(\eta''(u) + \chi A''(u)). \label{rcl_pulled_pde}
    \end{equation}
    Its derivation is given in \Cref{derivation}. Similar to the $\chi = 1$ case, we shift to the front
    at $m(t) = 2t - \frac{3}{2}\log t$, and scale by the same interpolating function \eqref{interpolating_function} as in \Cref{pushmi_pullyu_section}.
    We give its definition again for the reader's convenience:
    \begin{equation}
        h(x) = \begin{cases}
        2e^{-1}  \quad &x \in (-\infty,-1],\\
        e^{1/x} + e^x  \quad &x \in (-1,0),\\
        e^x \quad &x \in [0,\infty).
    \end{cases} \label{pulled_interpolating_function}
    \end{equation}
    Now we consider
    \begin{equation}
        \bar w(t, x) = h(x)\tilde w(t, x) = h(x) w(t, x + m(t)).
    \end{equation}
    In addition, because we have shifted to the $2t -\frac{3}{2}\log t$ moving frame, we let
    \begin{equation}
        \hat w (t, x) = \frac{1}{t^{3/2}} \bar w(t, x). \label{pulled_hat_w}
    \end{equation}
    Let us note that up to shifting by a constant $x_0$, we can assume $1- A'(\tilde u(t, x)) \leq -C < 0$
    for $x \leq 0$.

    We now give the analogues of \Cref{forward_lemma,adjoint_lemma} for the pulled case
    which, when combined, yield \Cref{shape_defect_theorem}.\eqref{i.pulled}.
    The first lemma gives the $L^2$-decay for $\hat w$, and its proof can be found in \Cref{pulled_forward_decay_subsection}.
    \begin{lemma}\label{pulled_rcl_L2_lemma}
        Suppose the initial data $u_0$ satisfies the conditions of \Cref{main_result_theorem}. Then
        \begin{equation}\label{pulled_L2_decay}
            \norm{\hat w(t)}_{L^2} \leq \frac{C}{t^{5/4}}.
        \end{equation}
    \end{lemma}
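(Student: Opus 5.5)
The plan is to copy the proof of \Cref{forward_lemma}: derive the dissipation inequality $\frac{d}{dt}\int \hat w^2\,dx \le -2\int \hat w_x^2\,dx$, apply the classical Nash inequality, and bound $\norm{\hat w(t)}_{L^1}$ directly from the definition of $w$. The frame is now $m(t)=2t-\frac32\log t$, so $m'(t)=2-\frac{3}{2t}$, and by \eqref{rcl_pulled_pde} the shifted function satisfies
\[
\tilde w_t=\tilde w_{xx}-\chi A'(\tilde u)\tilde w_x+(1-A'(\tilde u))\tilde w+\Big(2-\tfrac{3}{2t}\Big)\tilde w_x+2\tilde w(\tilde w+\eta)(\eta''+\chi A'').
\]
Since $u_0$ is steep, $\tilde w\ge0$, and by \Cref{concave} $\eta''+\chi A''\le0$. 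So the last term is nonpositive and can be discarded in the energy estimate after multiplying by $h^2\tilde w\ge0$.

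Writing $\hat w=t^{-3/2}h\tilde w$, I get $\frac{d}{dt}\int\hat w^2=-\frac3t\int\hat w^2+t^{-3}I$, where $I$ is the analogue of the integral in \Cref{forward_lemma}, split over the same three regions.
\begin{itemize}
\item On $x\le-1$, $h$ is constant. The terms $(1-A')\le0$ and $\chi A''\tilde u_x\le0$ leave only $-2\int(\tilde wh)_x^2$.
\item On $x\ge0$, $h=e^x$. The growth term $1$ cancels against the $e^x$ conjugation, leaving $-(1-\chi)A'(\tilde u)\le0$, a nonpositive $\chi A''\tilde u_x$ term, and a term $\frac{3}{t}\int_0^\infty\hat w^2$ coming from $-\frac{3}{2t}\tilde w_x$. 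That last term is absorbed exactly by the $-\frac3t\int\hat w^2$ from the prefactor $t^{-3/2}$; this is precisely why that power was chosen.
\item On $(-1,0)$ I redo \eqref{rcl_integrand_ineq} with $\chi$. Using $0\le h'\le h$, the integrand is bounded by
\[
S\le 2h\big[(1-A')h+(\chi A'-1)h'\big]\le 2h(1-A')(h-h')\le0.
\]
The middle inequality holds because $\chi A'-1\le A'-1$ for $A'\ge0$, and the last because $A'(\tilde u)\ge1$ for $x\le0$.
\end{itemize}
Together these give $\frac{d}{dt}\int\hat w^2\le-2\int\hat w_x^2$, and Nash then yields $\frac{d}{dt}\int\hat w^2\le -C(\int\hat w^2)^3/(\int\hat w)^4$.

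For the $L^1$ bound I want $\int\hat w\,dx\le C/t$. As in \eqref{pushmi_pullyu_L1_easy}, $\int\tilde w\le1$, and integrating by parts,
\[
\int e^x\tilde w\,dx=\int e^x\big(\tilde u-\eta(\tilde u)\big)\,dx .
\]
Behind the front this integrand is bounded by $Ce^x$, which is integrable on $x\le0$. Ahead of the front I use the asymptotics of $\eta$ near $0$ obtained from \eqref{pulled_asympotics}: since $-U_*'=\theta(x-1)e^{-x}+\dots=U_*(1-1/x+\dots)$, we have $u-\eta(u)\le Cu/|\log u|$ for small $u$. It then remains to show that
\[
\int_0^\infty e^x\frac{\tilde u(t,x)}{1+x}\,dx\le C\sqrt t .
\]
For this I would invoke the ahead-of-front supersolution from \Cref{front_location_section}, built from the linearized Dirichlet problem as in \cite{shortproof}. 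It should give $\tilde u(t,x)\le C(1+x)e^{-x}e^{-x^2/(Ct)}$ for $x\ge0$, and then the integral is $\le C\int_0^\infty e^{-x^2/(Ct)}\,dx\le C\sqrt t$. Hence $\int\hat w\le Ct^{-3/2}(1+\sqrt t)\le C/t$.

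Plugging this into the Nash inequality gives $\frac{d}{dt}y\le -Ct^4y^3$ for $y=\int\hat w^2$. Integrating, $y(t)^{-2}\ge Ct^5$, so $y\le Ct^{-5/2}$, which is exactly $\norm{\hat w(t)}_{L^2}\le Ct^{-5/4}$. The energy computation is routine given the pushmi-pullyu case. I expect the main obstacle to be the $L^1$ estimate. It needs both the sharp Gaussian-cutoff upper bound on $\tilde u$ ahead of the front, with the correct $(1+x)e^{-x}$ prefactor in the $\frac32\log t$ frame, and the logarithmic gain $u-\eta(u)\lesssim u/|\log u|$; if either is lost, the bound degrades by a power of $\sqrt t$. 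If the Gaussian factor is not directly available from \Cref{front_location_section}, I would try first to extract it by comparing $e^x\tilde u$ with a heat-kernel-type solution of the Dirichlet problem on the half line.
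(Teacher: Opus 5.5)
Your proposal is correct and follows essentially the same route as the paper. The shared steps are the weighted energy computation with $S\le 2h(1-A'(\tilde u))(h-h')\le 0$, with the $-\frac{3}{t}\int\hat w^2$ term absorbing the frame-correction drift; the classical Nash inequality; and the bound $\int\hat w\,dx\le C/t$, obtained from $\int e^x(\tilde u-\eta(\tilde u))\,dx$ via the logarithmic gain $u-\eta(u)\lesssim u/\log(1/u)$ together with the Gaussian upper bound of \Cref{upperbound_prop}.

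The only difference is cosmetic. The paper applies the crude bound $\tilde u\le C(1+x)e^{-x}$ on $[0,a]$ with $a\approx 3\log t$, costing $C(\log t)^2$, and uses the log gain only beyond $a$. Your uniform bound $\tilde u\le C(1+x)e^{-x}e^{-x^2/(Ct)}$ on $[0,\infty)$ does follow from combining \Cref{prelim_estimate_corollary} on $[0,a]$, where the Gaussian factor is bounded below, with the third case of \Cref{upperbound_prop}.
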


    For the $L^\infty$-estimate we define the weight
    \begin{equation}
        g(x) = \begin{cases}
        M h(x) \quad & x \in (-\infty, 0)\\
        x^2 + Mx + M \quad & x \in [0, \infty)
    \end{cases} = \begin{cases}
        2M e^{-1} \quad & x \in (-\infty, -1]\\
        M(e^x + e^{1/x}) \quad & x \in (-1, 0)\\
        x^2 + Mx + M \quad & x \in (0, \infty).
    \end{cases} \label{pulled_weight}
    \end{equation}
    The constant $M > 0$ is to be chosen later.
    Also, note that $g(x) \in C^1(\R)$ is quadratic for $x \geq 0$, which differs from the linear weight in the $\chi = 1$ case.
    We use \eqref{pulled_weight} in the weighted Nash inequality and leads to the following $L^\infty$-bound.
    \begin{lemma}\label{rcl_pulled_adjoint_lemma}
         Let $0 < s < t$. Then, under the assumptions of \Cref{main_result_theorem},
         \begin{equation}\label{rcl_pulled_adjoint_result}
            \norm{\hat w(t)}_{L^\infty_{1/g}} \leq \frac{C\norm{\hat w(s)}_{L^2}}{(t-s)^{5/4}}.
         \end{equation}
    \end{lemma}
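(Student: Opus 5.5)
The plan is to mirror the proof of \Cref{adjoint_lemma}, replacing the linear weight by the quadratic weight \eqref{pulled_weight} and using the weighted Nash inequality \eqref{weighted_nash_ineq} with $\beta = 2$.

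\textbf{Step 1: the adjoint equation and its $L^2$ dissipation.} First I would write the equation satisfied by $\hat w = t^{-3/2} h\tilde w$. Using \eqref{rcl_pulled_pde} in the frame $m(t)=2t-\frac32\log t$, this is a linear equation in $\hat w$ once the coefficient $c(t,x):=2(\tilde w+\eta(\tilde u))(\eta''+\chi A'')(\tilde u)$ is frozen. By \eqref{bad_nonlinear_term} and $\tilde w\ge 0$, $\tilde w+\eta\ge0$, we have $c\le 0$. Testing against $v$ gives the formal adjoint equation, the analogue of \eqref{push_pull_rcl_adjoint_pde}:
\[
v_t=-\frac{3v}{2t}+\frac{(vh)_{xx}}{h}+(1-A'(\tilde u))v+cv+\chi\frac{(A'(\tilde u)vh)_x}{h}-\Bigl(2-\frac{3}{2t}\Bigr)\frac{(vh)_x}{h}.
\]
I would then compute $\frac{d}{dt}\int v^2$ exactly as in \eqref{pushmi_pullyu_adjoint_ibp}. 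The zeroth-order integrand is a $\chi$-version of $S$ in \eqref{rcl_integrand_ineq}. On $x\ge0$ it reduces to a multiple of $(1-\chi)A'(\tilde u)\ge 0$ with the wrong sign, but I would absorb this using $c\le0$ together with the asymptotics $c\sim -2/(\log\tilde u)^2$ from \Cref{concave} and \Cref{eta_asympotics}. If that fails, I would bound it crudely by $C\int_0^\infty \tilde u\, v^2$, which is small far ahead, and handle the bounded region using the negativity behind the front. The $\frac{3}{2t}\int v^2h'/h$ term is dominated by $-\frac{3}{t}\int v^2$. On $x\le0$, the bound $1-A'\le -C$ gives coercivity. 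The target inequality is
\[
\tfrac{d}{dt}\textstyle\int v^2\le -2\int v_x^2 - C\int_{-\infty}^0 v^2.
\]

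\textbf{Step 2: weighted Nash.} Applying \eqref{weighted_nash_ineq} with $\beta=2$ on $(0,\infty)$ for $L\ge1$, and the classical Nash inequality for $L<1$, and splicing with the left half-line exactly as before, I would obtain
\[
\frac{d}{dt}\int v^2\le-\frac{C_1}{L^2}\int v^2+\frac{C_2}{L^7}\Bigl(\int vg\Bigr)^2 .
\]
Optimizing in $L$ then gives
\[
\frac{d}{dt}\int v^2\le -C\Bigl(\int v^2\Bigr)^{7/5}\Bigl(\int vg\Bigr)^{-4/5}.
\]
Provided $\int vg$ is nonincreasing, integrating this yields $\|v(t)\|_{L^2}^2\le C\|v_0\|_{L^1_g}^2 t^{-5/2}$, that is, $\|\tilde S_t\|_{L^1_g\to L^2}\le Ct^{-5/4}$. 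Duality then gives \eqref{rcl_pulled_adjoint_result} exactly as at the end of the proof of \Cref{adjoint_lemma}. Positivity of $v$ for $v_0\ge0$ follows from the comparison principle, and it suffices to treat nonnegative data by splitting $v_0=v_0^+-v_0^-$.

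\textbf{Step 3 (main obstacle): monotonicity of $\int vg$.} For $x\ge0$ we have $g=x^2+Mx+M$ and $h=e^x$. Integrating by parts, the terms $g''-2g'+g$, $-(2-\frac{3}{2t})(g'-g)$ and $\chi A'(g-g')$ combine. The leading parts cancel as in the $\chi=1$ computation, but a leftover $+2\int_0^\infty v$ from $g''$ remains, together with $(1-\chi)\int A'(\tilde u)vg$ and $\frac{3}{2t}$-terms. The $-\frac{3}{2t}\int vg$ term handles the time-dependent pieces. To beat $2\int_0^\infty v$, I would use $\int cvg\approx-\int_0^\infty \frac{2v(x^2+Mx+M)}{(\log\tilde u)^2}$. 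Since $\log\tilde u\approx -x+\log x$ near the front, this is roughly $-2\int v\,\frac{x^2+Mx+M}{x^2}$, which exceeds $2\int v$ in magnitude by the margin $\frac{M}{x}$. This is where the choice of $M$ enters and where the second-order expansion of \Cref{eta_asympotics} is essential. On a bounded region, and on $x<0$, I would choose $M$ large so that the negativity of $1-A'$ (multiplied by $g=Mh$ there) dominates. The remaining $(1-\chi)A'(\tilde u)$ term decays like $\tilde u\sim xe^{-x}$ and is absorbed. This delicate cancellation, which needs the precise constant $2$ in $c\sim -2/(\log u)^2$ and uniform-in-$t$ control of $\tilde u$ from \Cref{front_location_theorem} and \Cref{lower_bound_section}, is the step I expect to be hardest.
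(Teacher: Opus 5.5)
Your Steps 1 and 2 follow the paper's route: the same adjoint equation, the $\beta=2$ weighted Nash inequality spliced with coercivity on $x\le0$, the exponents $7/5$ and $4/5$, then duality. Step 3, however, has a genuine gap that comes from a sign error. On $x\ge0$, where $h=e^x$, the transport term contributes $+\bigl(2-\frac{3}{2t}\bigr)\int_0^\infty v(g'-g)$, not minus. After the cancellations one gets, for $v\ge0$,
\[
\frac{d}{dt}\int vg\,dx \le \int_0^\infty v\Big[g'' + cg - A'(\tilde u)\big((1-\chi)g+\chi g'\big) - \frac{3}{2t}g'\Big]\,dx,
\]
which is \eqref{pulled_L1_time_deriv}. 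So the $(1-\chi)A'(\tilde u)$ term is favorable, not a leftover to absorb. The same slip appears in Step 1: on $x\ge0$ the zeroth-order $L^2$ integrand is $-2(1-\chi)A'(\tilde u)+\chi A''(\tilde u)\tilde u_x\le0$, so nothing needs absorbing there.

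The sign matters on the intermediate region $0\le x\le\bar N$, where neither of your mechanisms applies.
\begin{itemize}
\item \Cref{concave_technical_lemma} is unavailable there, since it needs $x\ge N$, i.e.\ $\tilde u<1/100$.
\item $1-A'(\tilde u)$ has no sign there: the shifted frame only gives $1-A'\le -C$ for $x\le0$. Moreover $g\ne Mh$ for $x>0$, and the $(1-A')g$ term has already been used up in the cancellation.
\end{itemize}
Your mechanism, ``negativity of $1-A'$ multiplied by $g=Mh$,'' exists only on $x<0$, where every term is already nonpositive. So nothing in your plan controls $2\int_0^{\bar N}v$. With your sign it would be worse: $(1-\chi)\int A'(\tilde u)vg$ is of size $M$ on $[0,\bar N]$, cannot be absorbed by decay of $\tilde u$ on a bounded set, and grows with $M$.

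The missing idea is to use exactly this term. On $x\ge0$ you have $(1-\chi)g+\chi g'\ge M$. For $0\le x\le\bar N$, $A'(\tilde u(t,x))\ge A'(\tilde u(t,\bar N))\ge c_0>0$ uniformly in $t\ge1$. This holds because $\tilde u$ is nonincreasing in $x$, $A'$ is nondecreasing, and \Cref{lowerbound_lemma} bounds $\tilde u(t,\bar N)$ from below. Choosing $M>2/c_0$ then gives $2\int_0^{\bar N}v-M\int_0^{\bar N}vA'(\tilde u)\le0$.

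There is a smaller gap on $x\ge\bar N$. You approximate $\log\tilde u\approx-x+\log x$, but the lower bound on $\tilde u$ carries the factor $e^{-x^2/4t}$. So for $x\gtrsim t$ the bound you can actually prove on $cg$ is far weaker than $-2$. What \Cref{concave_technical_lemma} gives is
\[
cg\le-2-\frac{2M-\bar C}{x}+\frac{x}{t}+\frac{M}{t}+\frac{M}{xt}+O\bigl(\tfrac{M}{x^2}\bigr).
\]
The $\frac{x}{t}$ loss is paid for by the leftover $-\frac{3}{2t}\int_0^\infty vg'=-\frac{3}{2t}\int_0^\infty v(2x+M)$. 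The term $-\frac{3}{2t}\int vg$ that you point to does not do this; it only cancels the $+\frac{3}{2t}\int_0^\infty vg$ produced by the transport term.
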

    The proof of \Cref{rcl_pulled_adjoint_lemma} can be found in \Cref{pulled_adjoint_subsection}.
    With \Cref{pulled_rcl_L2_lemma} and \Cref{rcl_pulled_adjoint_lemma}, we can now
    prove \Cref{shape_defect_theorem}.\eqref{i.pulled}.
    \begin{proof}[Proof of {\Cref{shape_defect_theorem}}.\eqref{i.pulled}]
        We follow the proof given in \Cref{pushmi_pullyu_section} for \Cref{shape_defect_theorem}.\eqref{i.pushmi-pullyu},
        except we replace \Cref{forward_lemma,adjoint_lemma} with \Cref{pulled_rcl_L2_lemma, rcl_pulled_adjoint_lemma}.
    \end{proof}

    Before we begin the proof of the $L^2$-decay, we state one more lemma
    about the concavity of $\eta + \chi A$.
    \begin{lemma} \label{concave}
        For $0 \leq \chi < 1$,
        \begin{equation}\label{concave_result}
            \eta''(u) + \chi A''(u) \leq 0 \quad \text{ on } (0, 1).
        \end{equation}
    \end{lemma}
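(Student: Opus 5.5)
The plan is to derive a first-order linear ODE for $q:=\eta''+\chi A''$ as a function of $u$, and then run a first-crossing argument starting from $u=1$.

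\textbf{Step 1: an ODE for $F'$.} Set $F:=\eta+\chi A$. With $c=2$ we have $U_*'=-\eta(U_*)$, hence $U_*''=\eta'(U_*)\eta(U_*)$. Substituting into \eqref{intro_TW_eqn} gives the profile equation
\[
\eta\eta'=(2-\chi A'(u))\eta-(u-A(u)),
\qquad\text{i.e.}\qquad
\eta F'=2\eta-u+A(u)\quad\text{on }(0,1).
\]

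\textbf{Step 2: an ODE for $q=F''$.} Differentiating once gives $\eta q=(2-F')\eta'-1+A'$. Differentiating again and using $\eta''=q-\chi A''$ yields
\[
\eta q'=a(u)\,q+b(u),\qquad a=2-F'-2\eta',\qquad b=A''(u)\bigl(1-2\chi+\chi F'(u)\bigr).
\]
I will check that $A''\ge 0$. Writing $A=u\alpha$ with $\alpha=A/u$ increasing and convex, we get $A''=2\alpha'+u\alpha''\ge 0$. So the sign of $b$ is governed by $1-2\chi+\chi F'$.

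\textbf{Step 3: first crossing from $u=1$.} I would first establish $q\le 0$ in a left neighbourhood of $u=1$. For this I would use the exponential approach $U_*-1\sim -e^{\mu x}$ as $x\to-\infty$, where $\mu>0$ is the positive root of $\mu^2+(2-\chi A'(1))\mu+(1-A'(1))=0$, so that $\eta'(1)=-\mu$. Expanding $\eta$ to second order at $1$ should give the sign of $q$ there, and if needed one can read it off from $\eta q'=aq+b$ evaluated at $\eta=0$.

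Then let $u^*=\inf\{u:\ q\le 0 \text{ on }(u,1)\}$ and suppose $u^*>0$. Then $q(u^*)=0$ and $q>0$ at points arbitrarily close to the left of $u^*$, which forces $q'(u^*)\le 0$. On the other hand the ODE gives $\eta(u^*)q'(u^*)=b(u^*)$, and $\eta>0$ on $(0,1)$. So we get a contradiction provided $b(u^*)>0$.

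\textbf{Step 4: the sign of $b$ (main obstacle).} On $(u^*,1)$ we have $F''\le 0$, so $F'$ is decreasing there and $F'(u)\ge F'(1)=\chi A'(1)-\mu$. Hence it suffices to show
\[
1-2\chi+\chi(\chi A'(1)-\mu)>0,
\]
which I would verify from the explicit formula for $\mu$ together with $A'(1)\ge 1$. That last inequality follows from the convexity of $\alpha$ and $\alpha(0)=0$, $\alpha(1)=1$. Strictness at the single point $u^*$ is also needed: if $A''(u^*)=0$, I would perturb the argument, for instance by applying it to $q-\varepsilon\phi$ for a suitable positive $\phi$, or argue via the integrating-factor representation of $q$ on $(u^*,1)$ instead.

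I expect this inequality to be the main obstacle. If it fails for $\chi$ near $1$, the fallback is to write $q$ via the integrating factor $\exp\bigl(-\int a/\eta\bigr)$ integrated down from $u=1$, and to control $b$ only in an averaged sense, using the profile identity $\eta F'=2\eta-u+A$ to bound $F'$ from below directly rather than through $F'(1)$.
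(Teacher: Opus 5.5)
Your route is sound and genuinely different from the paper's.

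\textbf{Comparison with the paper.} The paper sweeps upward from $u=0$. It gets $q<0$ near $0$ from the imported asymptotics $\eta'\sim 1+1/\log u$, then takes the first point $\bar u$ where $q$ turns positive. There it uses the identity $[\eta^2q]'=(u-A)q+A''\eta\,\Gamma$, where $\Gamma:=1-2\chi+\chi F'$ (so your $b=A''\Gamma$, and $\eta\Gamma=\eta-\chi(u-A)$), to get $\Gamma(\bar u)\ge0$. It then needs a separate algebraic step to make this strict. Finally it follows $\eta^2q$ to a positive local maximum $v$, using $\Gamma'=\chi q>0$ on $[\tilde u,v]$ and $\eta^2q\to0$ at $u=1$, to reach a contradiction.

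You sweep downward from $u=1$ with the first-order equation $\eta q'=aq+b$, whose derivation I checked. So the contradiction occurs at the single crossing point. Positivity of $\Gamma$ there is automatic: $F''\le0$ on $(u^*,1)$ gives $\Gamma(u^*)\ge\Gamma(1^-)$. This buys a shorter argument that avoids the delicate $u\to0$ asymptotics. It costs second-order boundary information at the saddle $u=1$.

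\textbf{Step 4 is not the obstacle.} Put $p:=A'(1)$. The relation $\mu^2+(2-\chi p)\mu=p-1$ gives $\Gamma(1^-)=1-\chi(p-1)/\mu$. Substituting $\chi(p-1)$ into the characteristic polynomial gives $-(p-1)(1-\chi)^2<0$. Hence $\chi(p-1)<\mu$ and $\Gamma(1^-)>0$ for every $\chi\in[0,1)$. This is the same $(1-\chi)^2$ degeneracy that underlies the paper's ``only root is $\chi=1$'' step. You do need $p>1$ strictly; in fact convexity gives $p=1+\alpha'(1)\ge2$.

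\textbf{Step 3 is where the real work is.} With $A\in C^2$ only, you cannot simply expand $\eta$ to second order at $u=1$, or evaluate the equation at $\eta=0$. One way to close it:
\begin{enumerate}
\item Take $P=\exp(-\int a/\eta)$, so $(Pq)'=Pb/\eta$ and $P=(1-u)^{\kappa+o(1)}$ with $\kappa=a(1)/\mu$.
\item Compute $a(1)=(p-1)/\mu+2\mu>0$, so $\kappa>2$.
\item $Pq$ has a limit $L$ at $u=1$. If $L\neq0$, then $q$ is not integrable near $1$. That contradicts finiteness of $F'(1^-)=\chi p-\mu$, which holds because the orbit leaves the saddle along its $C^1$ unstable manifold.
\item Hence $L=0$, so $q(u)=-P(u)^{-1}\int_u^1 Pb/\eta$. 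By l'H\^opital, $q(1^-)=-b(1)/a(1)<0$, using $A''(1)\ge2\alpha'(1)>0$.
\end{enumerate}

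\textbf{The case $A''(u^*)=0$.} The same representation started at $u^*$ handles this without any perturbation. Since $\Gamma(u^*)>0$, we have $b\ge0$ on a left neighbourhood of $u^*$. Then $q(u)=-P(u)^{-1}\int_u^{u^*}Pb/\eta\le0$ there, contradicting the definition of $u^*$. Note that this works on a left neighbourhood of $u^*$, not on $(u^*,1)$ as you wrote. The paper instead simply asserts $A''>0$, which is valid when $A/u$ is strictly increasing.
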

    This result is important because
    as compared the $\chi = 1$ case,
    the pulled case has an extra term:
    \begin{equation}\label{pulled_nonlinear_term}
        2w(w+\eta(u))(\eta''(u) + \chi A''(u)).
    \end{equation}
    Understanding \eqref{pulled_nonlinear_term} is imperative in finding the sharp convergence rate,
    so \Cref{concave} tells us at the very least \eqref{pulled_nonlinear_term} is nonpositive.
    \Cref{concave} is proven in \Cref{concave_proof}.

    \subsection{$L^2$-estimates: proof of \Cref{pulled_rcl_L2_lemma}} \label{pulled_forward_decay_subsection}
    \begin{proof}[Proof of {\Cref{pulled_rcl_L2_lemma}}]
        The overarching idea is similar to \Cref{pushmi_pullyu_section}:
        we aim to calculate $\int (\hat w^2)_t dx$ and apply the classical Nash inequality. There is
        more difficulty in calculating $\norm{\hat w}_{L^1}$, but it still mirrors
        the approach for $\chi = 1$.

    Using the PDE~\eqref{rcl_pulled_pde} for $w$, we have
    \begin{equation} \label{pulled_L2_calulation}
        \begin{split}
            \frac{d}{dt}\int \hat w^2 dx &= \frac{d}{dt}\left[\frac{1}{t^3}\int (\tilde w h(x))^2 dx \right]\\
            &= -\frac{3}{t^3}\int \hat w^2 dx + \frac{1}{t^3} \int 2\tilde w h(x)^2 (\tilde w_{xx} +(1-A'(\tilde u))\tilde w - \chi A'(\tilde u)\tilde w_x) dx\\
            & \hspace*{2cm}+  \frac{1}{t^3}\int 2\tilde w h(x)^2 \left[2\tilde w(\tilde w + \eta(\tilde u))(\eta'' + \chi A'')  + \left(2 - \frac{3}{2t}\right)\tilde w_x\right]dx\\
            &=: -\frac{3}{t}\int \hat w^2 dx + \frac{1}{t^3} I.
        \end{split}
    \end{equation}
    Note we have defined $I$ implicitly in the last equality. After integrating by parts $I$ becomes
    \begin{equation}\label{pulled_L2_ibp}
        \begin{split}
            I = -2&\int \hat w_x^2 dx + \int \tilde w^2 \left[2h'(x)^2 + 2(1-A'(\tilde u)) h(x)^2 + \chi A''(\tilde u)\tilde u_x h(x)^2\right]dx\\
            &+ \int \tilde w^2 \left[2 \chi A'(\tilde u)h(x)h'(x) + 4(\tilde w + \eta(\tilde u))(\eta'' + \chi A'')h(x)^2 - 4 h(x)h'(x) \right]dx\\
            &+ \frac{3}{t}\int \tilde w^2 h(x)h'(x) dx.
        \end{split}
    \end{equation}
    We aim to show that the sum of the middle two integrals in \eqref{pulled_L2_ibp} is nonpositive.
    First note that $\chi A''(\tilde u) \tilde u_x h(x)^2 \leq 0$ and by \Cref{concave},
    $\eta'' + \chi A'' \leq 0$.
    Therefore, we consider
    \begin{equation}
        S := 2h'(x)^2 + 2(1-A'(\tilde u))h(x)^2 + \chi A''(\tilde u)\tilde u_x h(x)^2 + 2\chi A'(\tilde u)h(x)h'(x) - 4 h(x)h'(x). \label{pulled_S_def}
    \end{equation}
    Recall that $0 \leq h' \leq h$, so \[2h'(x)^2 - 4h(x)h'(x) \leq 2h(x)h'(x) - 4h(x)h'(x) = -2h(x)h'(x).\]
    Hence,
    \begin{align*}
        S &\leq 2h(x) \left(-h'(x) + \chi A'(\tilde u)h'(x) + (1-A'(\tilde u))h(x)\right)\\
        &= 2h(x)(h'(x)(\chi A'(\tilde u) - 1) + (1-A'(\tilde u))h(x))\\
        &\leq 2h(x)(h'(x)(A'(\tilde u) - 1) + (1-A'(\tilde u))h(x))\\
        &= 2h(x)(1-A'(\tilde u))(h(x) - h'(x)).
    \end{align*}
    Since $h(x) = h'(x)$ for $x \geq 0$, we have $S \leq 0$ for $x \geq 0$. 
    On the other hand, $1-A'(\tilde u) \leq 0$ for $x \leq 0$ and $h' \leq h$.
    Thus, $S \leq 0$ for $x \leq 0$, implying $S \leq 0$ for all $x \in \R$.
    This means that
    \begin{equation}
        I \leq -2\int \hat w_x^2 dx + \frac{3}{t}\int \tilde w^2 h(x)h'(x) dx \leq -2\int \hat w_x^2 dx + \frac{3}{t}\int (\tilde w h(x))^2dx. \label{pulled_middle_integrals}
    \end{equation}
    In the last equality, we used that $h' \leq h$ again. Inserting \eqref{pulled_middle_integrals} in \eqref{pulled_L2_calulation},
    we obtain
    \[\frac{d}{dt}\int \hat w^2 dx \leq -\frac{3}{t}\int \hat w^2 dx - \frac{2}{t^3} \int (\tilde w h(x))_x^2dx + \frac{3}{t^4}\int (\tilde w h(x))^2dx = -2 \int \hat w_x^2 dx.\]
    Now we can apply the classical Nash inequality and find
    \begin{equation}\label{rcl_pulled_forward_dissipation}
        \frac{d}{dt}\int \hat w^2 dx \leq -C\frac{\left(\int \hat w^2dx\right)^{3}}{\left(\int \hat w dx\right)^4}.
    \end{equation}

    Below we show that
    \begin{equation}
        \int \hat w dx \leq \frac{C}{t}. \label{pulled_L1_estimate}
    \end{equation}
    Let us assume \eqref{pulled_L1_estimate} is true for now.  We show how to conclude the lemma under this assumption, and, afterwards, we verify \eqref{pulled_L1_estimate}.
    From this assumption, we deduce
    \[ \frac{d}{dt}\int \hat w^2dx\leq -Ct^4\left(\int \hat w^2 dx\right)^{3}. \]
    After solving this differential inequality for $\norm{\hat w(t)}_{L^2}$, one obtains
    the desired $L^2$-decay \eqref{pulled_L2_decay}. Therefore, all that remains is to show \eqref{pulled_L1_estimate}.

    To estimate $\norm{\hat w(t)}_{L^1}$,
    we first note that, by \eqref{prelim_estimate_result}, we have the preliminary bound
    \begin{equation}\label{pulled_L1_prelim}
    \tilde u(t, x) \leq CU_*(x) \leq C(x + 1) e^{-x}.
    \end{equation}
    Also, since $\hat w(t, x) = t^{-3/2}h(x) \tilde w(t, x) \leq t^{-3/2}(1+e^x)\tilde w(t, x)$,
    \begin{equation} \label{pulled_L1_first}
        \begin{split}
            t^{3/2}\int \hat w dx &\leq \int \tilde w (1+e^x)dx = \int (-\tilde u_x - \eta(\tilde u))(1+e^x)dx = 1 + \int (e^x(\tilde u - \eta(\tilde u)) - \eta(\tilde u))dx\\
            &\leq 1 + \int e^x(\tilde u - \eta(\tilde u))dx.
        \end{split}
    \end{equation}
    We integrated by parts in the second equality and used that $\tilde u(t, -\infty) = 1$.

    Now we estimate the integral in \eqref{pulled_L1_first}. For $a > 0$ to be determined below,
    \begin{equation}\label{pulled_L1_whole}
        \begin{split}
            \int e^x(\tilde u - \eta(\tilde u))dx &= \int_{-\infty}^{a} e^x(\tilde u - \eta(\tilde u)) dx + \int_{a}^\infty e^x(\tilde u - \eta(\tilde u))dx\\
            &\leq \int_{-\infty}^{a} e^x \tilde u dx + \int_{a}^\infty e^x(\tilde u - \eta(\tilde u))dx\\
            &\leq \int_{-\infty}^0 e^x dx + \int_{0}^{a} e^x \tilde u dx + \int_{a}^\infty e^x(\tilde u - \eta(\tilde u))dx\\
            &= 1 + \int_{0}^{a} e^x \tilde u dx + \int_{a}^\infty e^x(\tilde u - \eta(\tilde u))dx.
        \end{split}
    \end{equation}
    On $\left[0,a\right]$, we use \eqref{pulled_L1_prelim} to find
    \begin{equation}\label{pulled_L1_middle}
        \int_{0}^{a} e^x \tilde u dx \leq C\int_0^{a} (x+1) dx = C (a^2 + a).
    \end{equation}
    As a by-product of the proof of \Cref{front_location_theorem}, we prove an upper bound in \Cref{upperbound_prop}
    that defines constants $B$ and $T$, such that for $x \geq \frac{3}{2}\log(t+T) + \frac{3}{2}\log(t) + B$,
    \begin{equation} \label{pulled_shifted_u_bound}
         \tilde u(t, x) \leq C\left(x- \frac{3}{2}\log t\right)e^{-x}e^{-\frac{\left(x - \frac{3}{2}\log t - B\right)^2}{4(t+T)}}.
    \end{equation}
    Due to the asymptotics \cite[Lemma 3.4]{an2023locationdeterminesconvergencerate} for $\eta$, we have that for $\tilde u$ small,
    \[\tilde u - \eta(\tilde u) \leq \left|\frac{C \tilde u}{\log \tilde u}\right| = \frac{C \tilde u}{\log u^{-1}}.\]
    Using that $\frac{\tilde u}{\log \tilde u^{-1}}$ is an increasing function of $\tilde u$ for $0 < \tilde u < 1$, along with \eqref{pulled_shifted_u_bound}, we find that
    \begin{equation}\label{pulled_L1_upperbound}
        \tilde u - \eta(\tilde u) \leq \frac{-Cxe^{-x}e^{-\frac{(x - \frac{3}{2}\log t-B)^2}{4(t+T)}}}{\log \left(Cxe^{-x}e^{-\frac{(x - \frac{3}{2}\log t-B)^2}{4(t+T)}}\right)} = \frac{-Cxe^{-x}e^{-\frac{(x - \frac{3}{2}\log t-B)^2}{4(t+T)}}}{-x - \frac{(x-\frac{3}{2}\log t-B)^2}{4(t+T)} + \log(Cx)} \leq Ce^{-x}e^{-\frac{(x - \frac{3}{2}\log t-B)^2}{4(t+T)}}.
    \end{equation}
    The consequence of this calculation is that $\tilde u - \eta(\tilde u)$ ``removes'' the linear prefactor
    from $\tilde u$. If we had just used the bound $\tilde u - \eta(\tilde u) \leq \tilde u$, we would
    not obtain the correct $L^1$-estimate.

    Setting $a = \frac{3}{2}\log(t+T) + \frac{3}{2}\log(t) + B$ and
    plugging ~\eqref{pulled_L1_upperbound} into the integral over $[a, \infty)$ in ~\eqref{pulled_L1_whole} yields
    \begin{equation}\label{pulled_L1_right}
        \int_{a}^\infty e^x(\tilde u - \eta(\tilde u))dx \leq \int_{\frac{3}{2}\log(t+T) + \frac{3}{2}\log(t) + B}^\infty Ce^{-\frac{(x - \frac{3}{2}\log t-B)^2}{4t}}dx \leq C\int_0^\infty e^{-\frac{x^2}{4t}}dx \leq C \sqrt t.
    \end{equation}
    In addition, substituting $a$ in \eqref{pulled_L1_middle} gives us
    \begin{equation}\label{pulled_L1_middle_complete}
         \int_{0}^{a} e^x \tilde u dx \leq C\log(t)^2.
    \end{equation}
    Using  \eqref{pulled_L1_right} and \eqref{pulled_L1_middle_complete} in \eqref{pulled_L1_whole} and dividing by $t^{3/2}$, we obtain the desired estimate:
    \[\int \hat w dx \leq \frac{C}{t}.\]
    The proof is complete.
    \end{proof}

    \subsection{$L^2 \to L^\infty$ estimates: proof of \Cref{rcl_pulled_adjoint_lemma}}\label{pulled_adjoint_subsection}
    Before proving \Cref{rcl_pulled_adjoint_lemma}, we state a technical lemma which
    gives estimates on $2\eta(\tilde u)(\eta''(\tilde u) + \chi A''(\tilde u))$.
    \begin{lemma} \label{concave_technical_lemma}
        Let $u$ be a solution to \eqref{main_eq} with the assumption \eqref{initial_data_assumption}.
        There exists an $N > 0$ so that for all $x \geq N$,
        \begin{equation}
            2\eta(\tilde u)(\eta''(\tilde u) + \chi A''(\tilde u)) \leq \frac{-2}{x^2} + \frac{1}{xt} + \frac{C}{x^3}. \label{pulled_nonlinear_asymptotics}
        \end{equation}
    \end{lemma}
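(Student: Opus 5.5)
The plan is to combine an asymptotic expansion of the nonlinear coefficient in terms of $\log \tilde u^{-1}$ with a \emph{lower} bound on $\tilde u$ ahead of the front. Since the left side of \eqref{pulled_nonlinear_asymptotics} is nonpositive by \Cref{concave}, bounding it from above means showing it is not too negative. That requires an upper bound on $\log \tilde u^{-1}$, i.e.\ a lower bound on $\tilde u$.

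\textbf{Step 1 (expansion in $u$).} I would first invoke the expansion from \Cref{eta_asympotics}. It should give, for $u$ small, with $\ell := \log u^{-1}$,
\[
2\eta(u)\big(\eta''(u)+\chi A''(u)\big) \le -\frac{2}{\ell^2} + \frac{C}{\ell^3},
\]
which refines \eqref{log_approx}. If the appendix gives a two-sided expansion, I only keep this upper bound.

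\textbf{Step 2 (lower bound on $\tilde u$).} I would use the lower bound from \Cref{lower_bound_section}, which extends \cite[Proposition 3.1]{shortproof} to the positive half line. In the $2t-\tfrac32\log t$ frame I expect it to read
\[
\tilde u(t,x) \ge c\,x e^{-x} e^{-x^2/(4t)} \qquad \text{for } x \ge N.
\]
Taking logarithms gives
\[
\ell \le x + \frac{x^2}{4t} - \log x + C \le x\Big(1+\frac{x}{4t}\Big) + C.
\]
I would also use the upper bound \eqref{pulled_L1_prelim}, $\tilde u \le C(x+1)e^{-x}$. It gives $\ell \ge x - \log(x+1) - C$, so $\ell \to \infty$ as $x \to \infty$ uniformly in $t$. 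This justifies the small-$u$ expansion for $x \ge N$ and lets me bound $C/\ell^3 \le C'/x^3$.

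\textbf{Step 3 (case split).} I would split on the size of $x$ relative to $t$.

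If $x \ge 2t$, then $-\tfrac{2}{x^2} + \tfrac{1}{xt} \ge 0$. The left side of \eqref{pulled_nonlinear_asymptotics} is $\le 0$ by \Cref{concave}, so the inequality is immediate.

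If $N \le x < 2t$, set $\varepsilon := x/(4t) + C/x$, which is bounded. Then $\ell \le x(1+\varepsilon)$. Using the elementary inequality $(1+\varepsilon)^{-2} \ge 1 - 2\varepsilon$, I get
\[
-\frac{2}{\ell^2} \le -\frac{2}{x^2}(1-2\varepsilon) = -\frac{2}{x^2} + \frac{1}{xt} + \frac{4C}{x^3}.
\]
Adding the $C/\ell^3 \le C'/x^3$ error from Step 1 gives \eqref{pulled_nonlinear_asymptotics}.

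\textbf{Main obstacle.} I expect the difficulty to lie in Step 2 together with matching constants. The coefficient $1$ in front of $\tfrac{1}{xt}$ forces the Gaussian factor in the lower bound to be exactly $e^{-x^2/(4t)}$, with no worse constant in the exponent. It also requires that any time shift, such as $t+T$, or spatial shift $B$ only produce errors absorbable into $C/x^3$.

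If the available lower bound has the form $e^{-(x-B)^2/(4(t+T))}$ instead, I would expand it:
\[
\frac{(x-B)^2}{4(t+T)} \le \frac{x^2}{4t} + \frac{C}{x}\cdot\frac{x}{t} + \cdots,
\]
check that the extra pieces are $O(1)$ or $O(x/t)$ with small coefficients, and if necessary enlarge $N$.

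The $-\log x$ term in $\ell$ has a favorable sign and can simply be dropped.
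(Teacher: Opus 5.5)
\textbf{There is a genuine gap in Step 1.} Your architecture matches the paper's: the small-$u$ expansion from \Cref{asympotics_lemma}, the lower bound $\tilde u\ge Cxe^{-x}e^{-x^2/(4t)}$ of \Cref{lowerbound_lemma} to bound $\log\tilde u^{-1}$ from above, and the upper bound $\tilde u\le C(x+1)e^{-x}$ for smallness. But Step 1 is false, and Step 2 then discards exactly the term needed to compensate.

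In the variable $\ell=\log u^{-1}$, the expansion is not $-2/\ell^2+O(\ell^{-3})$. Indeed, $\eta(\eta''+\chi A'')=-(1-\eta')^2+O(u)$ and $1-\eta'(u)=1/x_u+O(x_u^{-2})$ with $x_u=U_*^{-1}(u)$. Since $u=U_*(x_u)\approx\theta x_u e^{-x_u}$, this forces $x_u=\ell+\log\ell+O(1)$. Hence
\[
2\eta(u)\big(\eta''(u)+\chi A''(u)\big)=-\frac{2}{\ell^{2}}+\frac{4\log\ell}{\ell^{3}}+O\big(\ell^{-3}\big),
\]
which is what the term $\frac{2\log\log(\theta/u)}{\log(\theta/u)^3}$ in \Cref{asympotics_lemma} records. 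The positive $\log\ell/\ell^3$ term is not $O(\ell^{-3})$. If you run your Step 3 with the correct expansion, and with $-\log x$ dropped, you only get $-\frac{2}{x^2}+\frac{1}{xt}+\frac{C\log x}{x^3}$, which is not the statement.

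This loss is not cosmetic. In the proof of \Cref{rcl_pulled_adjoint_lemma} the bound is multiplied by $g=x^2+Mx+M$, and the result must be $\le -2$ exactly, with only $-2M/x$ of slack. An error of size $C\log x/x$ cannot be absorbed by that slack for any fixed $M$.

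\textbf{How to fix it.} Keep the $-\log(Cx)$ that you called harmless, so that $\ell\le x+\frac{x^2}{4t}-\log(Cx)$. Apply $(1+z)^{-2}\ge 1-2z$, valid for all $z>-1$, with $z=\frac{x}{4t}-\frac{\log(Cx)}{x}$. The leading term then gives
\[
-\frac{2}{x^2}+\frac{1}{xt}-\frac{4\log(Cx)}{x^3},
\]
and no case split is needed. Separately, the $\log\log$ term is increasing in $u$ for small $u$. So the upper bound $\tilde u\le C(x+1)e^{-x}$ bounds it by $\frac{4\log x}{x^3}\big(1+\frac{C\log x}{x}\big)$. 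The two $\frac{4\log x}{x^3}$ contributions cancel, leaving $O(x^{-3})$; this cancellation is the real content of the lemma.

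Alternatively, work with $x_{\tilde u}=U_*^{-1}(\tilde u)$ instead of $\ell$. There the expansion genuinely is $-2/x_{\tilde u}^2+O(x_{\tilde u}^{-3})$, and the lower bound on $\tilde u$ yields $x_{\tilde u}\le x+\frac{x^2}{4t}+C$ when $x\le 2t$. That is where your case split becomes useful.

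Finally, your worry about the Gaussian constant is unfounded: \Cref{lowerbound_lemma} gives exactly $e^{-x^2/(4t)}$ for $t\ge 1$, $x\ge 0$, with no shifts.
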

    The proof of \Cref{concave_technical_lemma} is given in \Cref{concave_proof}.

    \begin{proof}[Proof of {\Cref{rcl_pulled_adjoint_lemma}}]
    Similar to when $\chi = 1$, we employ an adjoint argument to find a $L^2_g \to L^\infty$ bound.
    The adjoint equation for $\hat w(t, x) = \frac{1}{t^{3/2}}h(x) \tilde w(t, x)$ is given by
    \begin{equation}\label{rcl_pulled_adjoint_eqn}
        \begin{split}
            v_t = \frac{(hv)_{xx}}{h} + &\frac{\chi (A'(\tilde u)hv)_x}{h} +(1-A'(\tilde u))v\\
        &+ 2v(\tilde w + \eta(\tilde u))(\eta''(\tilde u) + \chi A''(\tilde u)) - \frac{2(hv)_x}{h} + \frac{3(hv)_x}{2t} - \frac{3}{2t}v.
        \end{split}
    \end{equation}

    We start by calculating how the $\norm{v}_{L^2}^2$ changes in time:
    \begin{equation}\label{rcl_pulled_adjoint_dissipation}
        \begin{split}
            \frac{d}{dt} \int v^2 dx = -2&\int v_x^2 dx + \frac{3}{t}\int v^2 \left(\frac{h'}{h} - 1\right)dx + 4\int v^2 (\tilde w + \eta(\tilde u))(\eta''(\tilde u) + \chi A''(\tilde u))dx\\
        &+ \int v^2 \left[\frac{2h'(x)^2}{h(x)^2} + 2(1-A'(\tilde u)) + A''(\tilde u)\tilde u_x + \frac{2\chi A'(\tilde u)h'}{h} - \frac{4h'(x)}{h(x)}\right]dx.
        \end{split}
    \end{equation}
    We aim to show the last integral in \eqref{rcl_pulled_adjoint_dissipation} is negative. Define
    \[S := \frac{2h'(x)^2}{h^2} + 2(1-A'(\tilde u)) + A''(\tilde u)\tilde u_x + \frac{2\chi A'(\tilde u)h'}{h} - \frac{4h'(x)}{h(x)},\]
    which can be rewritten as
    \[
    	S = \frac{2}{h(x)^2}(h'(x)^2 + (1-A'(\tilde u))h(x)^2 + \chi A'(\tilde u)h'(x)h(x) - 4h'(x)h(x)).
    	\]
    By \eqref{pulled_S_def}, we have that 
    \[
    S \leq \frac{2}{h(x)}(1-A'(\tilde u))(h(x) - h'(x)) \leq 0.
    \]
    Just as in \Cref{pushmi_pullyu_adjoint_section}, we find
    $S \leq 0$ on $\R$ and, specifically for $x \leq 0$,
    \[
    S \leq 2(1-A'(\tilde u)) \leq -C.
    \]
    Now we use our bound on $S$ in \eqref{rcl_pulled_adjoint_dissipation}, along with the fact that $\eta'' + \chi A'' \leq 0$, to find
    \[\frac{d}{dt} \int v^2 dx \leq -2\int v_x^2 dx + \frac{3}{t}\int v^2 \left(\frac{h'(x)}{h(x)} - 1\right)dx - C \int_{-\infty}^{0} v^2 dx.\]
    Moreover, recall that $h' \leq h$, so $\frac{h'(x)}{h(x)} - 1 \leq  0$. Therefore, 
    \[\frac{d}{dt} \int v^2 dx \leq -2\int v_x^2 dx - C \int_{-\infty}^{0} v^2 dx.\]
    By the weighted Nash inequality, we obtain
    \begin{equation}
        \frac{d}{dt}\int v^2dx \leq -C\frac{\left(\int v^2 dx\right)^{7/5}}{\left(\int v g(x)dx\right)^{4/5}} \label{rcl_pulled_dissipation}
    \end{equation}
    using the exact same process as the $\chi = 1$ case, except replacing $\beta = 1$ with $\beta = 2$. 

    For the moment, let us assume that $\norm{v(t)}_{L^1_g}$ is decreasing in time.  We show how to conclude the lemma under this assumption, and, afterwards, we verify that $\norm{v(t)}_{L^1_g}$ is, in fact, decreasing.  From this assumption, we deduce
    \begin{equation}
        \frac{d}{dt}\int v^2dx \leq -C\frac{\left(\int v^2 dx\right)^{7/5}}{\left(\int v_0 g(x)dx\right)^{4/5}}. \nonumber
    \end{equation}
    Solving this differential inequality, we arrive at the desired $L^1_{g} \to L^2$ bound:
    \begin{equation}
        \norm{v}_{L^2} \leq \frac{C\norm{v_0}_{L^1_{\bar g}}}{t^{5/4}}.
    \end{equation}
    Following the adjoint argument from \Cref{adjoint_lemma}, we obtain for $0 \leq s < t$
    \[\norm{\hat w(t)}_{L^\infty_{1/g}} \leq \frac{C \norm{\hat w(s)}_{L^2}}{(t-s)^{5/4}}.\]
    Therefore, all that remains is to show that $\norm{v(t)}_{L^1_g}$ is decreasing, which we do now.

    To show that $\int v(t,x) g(x) dx$ is decreasing in time, we take its time derivative and show that it is nonpositive.
    Using \eqref{rcl_pulled_adjoint_eqn}, we see that
    \begin{align*}
        \frac{d}{dt}\int v g(x)dx = &\int \frac{(v h)_{xx}}{h} g dx + \int v (1-A'(\tilde u)) g(x)dx  + \chi\int \frac{(A'(\tilde u) hv)_x}{h} gdx\\
        &+ 2\int v(\tilde w + \eta(\tilde u))(\eta'' + \chi A'')g(x)dx -2 \int \frac{(vh)_x}{h} g dx + \frac{3}{2t}\int \frac{(vh)_x}{h}g dx - \frac{3}{2t}\int v g(x)dx.
    \end{align*}
    After integrating by parts, we find
    \begin{alignat*}{3}
        &\int \frac{(v h)_{xx}}{h} g dx &&= \int v \left(g'' - \frac{h''}{h} g + 2\frac{h'(x)^2}{h^2}g - 2\frac{h'}{h}g'\right)dx,\\
        \chi&\int \frac{(A'(\tilde u) hv)_x}{h} gdx &&= -\chi\int v A'(\tilde u) g' dx + \chi\int v A'(\tilde u)\frac{h'}{h}gdx,\\
        \left(-2 + \frac{3}{2t}\right) &\int \frac{(vh)_x}{h} g dx &&=  2 \int v g'dx - 2 \int v \frac{h'}{h}g dx -\frac{3}{2t} \int v g'dx + \frac{3}{2t} \int_0^\infty v \frac{h'}{h}g dx.
    \end{alignat*}
    Note that $g(x) = Mh(x)$ for $x \leq 0$. Using this fact on the three above integrals, implies
    \begin{alignat*}{3}
        &\int \frac{(v h)_{xx}}{h} g dx &&= \int_0^\infty v \left(g'' - \frac{h''}{h} g + 2\frac{h'(x)^2}{h^2}g - 2\frac{h'}{h}g'\right)dx,\\
        \chi&\int \frac{(A'(\tilde u) hv)_x}{h} gdx &&= -\chi\int_0^\infty v A'(\tilde u) g' dx + \chi\int_0^\infty v A'(\tilde u)\frac{h'}{h}gdx,\\
        \left(-2 + \frac{3}{2t}\right) &\int \frac{(vh)_x}{h} g dx &&=  2 \int_0^\infty v g'dx - 2 \int_0^\infty v \frac{h'}{h}g dx -\frac{3}{2t} \int_0^\infty v g'dx + \frac{3}{2t} \int_0^\infty v \frac{h'}{h}g dx.
    \end{alignat*}

    Hence,
    \begin{equation} \label{pulled_term_by_term}
        \begin{split}
            \frac{d}{dt}\int v g(x)dx = \int_0^\infty &vg''(x)dx + \int v(1-A'(\tilde u)) g(x)dx\\ 
            &+2\int v(\tilde w + \eta(\tilde u))(\eta'' + \chi A'')g(x)dx\\
            &+ \int_0^\infty v\left(-\frac{h''(x)}{h(x)} + \frac{2h'(x)^2 }{h(x)^2} - 2\frac{h'(x)}{h(x)} + \chi A'(\tilde u)\frac{h'(x)}{h(x)}\right) g(x) dx\\
            &+ \int_0^\infty v \left(2- \frac{2h'(x)}{h(x)} - \chi A'(\tilde u)\right)g'(x)dx - \frac{3}{2t}\int_0^\infty vg'(x)dx\\
            &+ \frac{3}{2t}\int_0^\infty v \left(-1 + \frac{h'(x)}{h(x)}\right)g(x)dx.
        \end{split}
    \end{equation}
    We now estimate term-by-term each integral in \eqref{pulled_term_by_term}.
    First, recall that $1-A'(\tilde u) \leq 0$ for $x \leq 0$, so that
    \begin{equation}\label{pulled_L1_intermediate1}
        \int v(1-A'(\tilde u)) g(x)dx \leq \int_0^\infty v(1-A'(\tilde u)) g(x)dx.
    \end{equation}
    In addition, for $x \geq 0$, $h(x) = e^x$. Therefore, for $x \geq 0$,
    \begin{equation}\label{pulled_L1_intermediate2}
        -\frac{h''(x)}{h(x)} + \frac{2h'(x)^2 }{h(x)^2} - 2\frac{h'(x)}{h(x)} + \chi A'(\tilde u)\frac{h'(x)}{h(x)} = -1 + \chi A'(\tilde u)
    \end{equation}
    and
    \begin{equation}\label{pulled_L1_intermediate3}
        2- \frac{2h'(x)}{h(x)} - \chi A'(\tilde u) = - \chi A'(\tilde u).
    \end{equation}
    Lastly, using \eqref{pulled_L1_intermediate1}, \eqref{pulled_L1_intermediate2}, and \eqref{pulled_L1_intermediate3},
    along with $g''(x) = 2$ for $x \geq 0$, 
    we can simplify $\frac{d}{dt} \int vg dx$ to
    \begin{equation}\label{pulled_L1_time_deriv}
        \begin{split}
            \frac{d}{dt}\int v g(x)dx \leq 2\int_0^\infty &vdx  + 2\int_0^\infty v(\tilde w + \eta(\tilde u))(\eta'' + \chi A'')g(x)dx\\
         &-\int_0^\infty vA'(\tilde u)((1 - \chi)g(x) + \chi g'(x))dx -\frac{3}{2t}\int_0^\infty v g'(x)dx.
        \end{split}
    \end{equation}
    At this point, all the terms on the right hand side of \eqref{pulled_L1_time_deriv} are nonpositive except
    for the first integral. Now we need to use the negative nonlinear term, $\eta(\tilde u)(\eta'' + \chi A'')$.

    To proceed, we separate into the ``$x$ large'' regime $[\bar N,\infty)$ and the remaining regime $[0,\bar N)$, where $\bar N$ is chosen below (see~\eqref{e.bar N}).  
    In the former regime, when $x$ is large, we aim to use \Cref{concave_technical_lemma},
    which allows the second and fourth integrals on the right hand side of~\eqref{pulled_L1_time_deriv} to cancel the first integral.
    In the latter regime, on which $x \sim O(1)$, we adjust $M$ in the weight $g$~\eqref{pulled_weight} in order to allow the third integral on the right hand side of~\eqref{pulled_L1_time_deriv} to cancel the first integral.

    We fix a preliminary constant $N$ as in \Cref{concave_technical_lemma}. Then, for $x \geq N$, \Cref{concave_technical_lemma} yields
    \begin{align}
        2\eta(\tilde u)(\eta''(\tilde u) + \chi A''(\tilde u)) g(x) &\leq \left(\frac{-2}{x^2}+\frac{1}{xt} + \frac{\bar C}{x^2}\right)\left(x^2 + Mx + M\right) \nonumber\\
        &= -2 -\frac{2M}{x} -\frac{2M}{x^2} + \frac{x}{t}+ \frac{M}{t} + \frac{M}{xt} + \frac{\bar C}{x} + \frac{\bar CM}{x^2} + \frac{\bar CM}{x^3}.  \label{nonlinearity_bound}
    \end{align}
    Consider the last integral in \eqref{pulled_L1_time_deriv}.  Observe that the weight in this integral satisfies
    \[-\frac{3}{2t} g'(x) = \frac{-3(2x + M)}{2t} \leq \frac{-3x}{t} -\frac{3M}{2t}.\]
    This compensates for all the terms with a $1/t$ in \eqref{nonlinearity_bound}, so that
    \[2\eta(\tilde u)(\eta''(\tilde u) + \chi A''(\tilde u)) g(x) -\frac{3}{2t} g'(x) \leq -2 -\frac{2M}{x} -\frac{2M}{x^2} + \frac{\bar C}{x} + \frac{\bar CM}{x^2} + \frac{\bar CM}{x^3}.\]
    Now we can choose $M > \bar C$ to obtain
    \[2\eta(\tilde u)(\eta''(\tilde u) + \chi A''(\tilde u)) g(x) -\frac{3}{2t} g'(x) \leq -2 - \frac{M}{x} -\frac{2M}{x^2} + \frac{\bar CM}{x^2} + \frac{\bar CM}{x^3}.\]
    Recall that $M$ is arises in~\eqref{pulled_weight}.  
    Observe that the last two terms above have a bad sign.  They can, however, be absorbed into the first two terms when $x$ is large.  This motivates the following definition: let
    \begin{equation}\label{e.bar N}
    		\bar N := \max\{\bar C, N\}.
    \end{equation}
    Thus,  when $x > \bar N$,
    \begin{equation}\label{pulled_nonlinear_weighted_inequality}
        2\eta(\tilde u)(\eta''(\tilde u) + \chi A''(\tilde u)) g(x) -\frac{3}{2t} g'(x) \leq -2.
    \end{equation}
    Using \eqref{pulled_nonlinear_weighted_inequality} in \eqref{pulled_L1_time_deriv}, we obtain
    \begin{align*}
        \frac{d}{dt}\int v g(x)dx
        &\leq 2\int_0^{\bar N} v dx  - \int_0^{\bar N} vA'(\tilde u)((1-\chi)g(x) + \chi g'(x))dx\\
        &\leq 2\int_0^{\bar N} v dx  - \int_0^{\bar N} vA'(\tilde u)((1-\chi)(x^2 + Mx +M) + \chi (2x + M))dx.\\
        &\leq 2\int_0^{\bar N} v dx  - M\int_0^{\bar N} vA'(\tilde u)dx.
    \end{align*}
    Further increasing $M$, if necessary, so that 
    \[M > \max\left\{\bar C, \frac{2}{A'(\tilde u(t, \bar N))}\right\},\]
    we deduce the desired inequality
    \begin{align*}
        \frac{d}{dt}\int vg(x) &\leq 0.
    \end{align*}
    This concludes the proof that $\norm{v(t)}_{L^1_g}$ is decreasing.
    \end{proof}

    In the upcoming subsection, we prove \Cref{concave,concave_technical_lemma}.
    \subsection{Useful properties of $\eta'' + \chi A''$} \label{concave_proof}

    First we give a lemma to show that $\eta + \chi A$ has enough regularity.
    \begin{lemma}\label{eta_regularity_lemma}
        Let $\eta$ be defined as in \eqref{profile_function}. Then
        \begin{equation}
            \eta + \chi A \in C^3(0, 1). \label{eta_regularity}
        \end{equation}
    \end{lemma}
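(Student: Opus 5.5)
Since $U_*$ is strictly decreasing, I will work with the first-order system obtained by viewing the traveling wave ODE~\eqref{intro_TW_eqn} (with $c=2$) as a phase-plane equation in the variable $u=U_*$. Substituting $U_*' = -\eta(U_*)$ gives $U_*'' = \eta'(U_*)\eta(U_*)$. Plugging this into~\eqref{intro_TW_eqn} yields
\[
2\eta(u) - \chi A'(u)\eta(u) = \eta'(u)\eta(u) + u - A(u),
\]
and since $\eta>0$ on $(0,1)$ we can divide by $\eta$:
\[
\eta'(u) = 2 - \chi A'(u) - \frac{u - A(u)}{\eta(u)}, \qquad u \in (0,1).
\]
Equivalently,
\[
(\eta + \chi A)'(u) = 2 - \frac{u-A(u)}{\eta(u)}.
\]
This identity is the whole point: the combination $\eta+\chi A$ has derivative equal to a smooth function of $u$ and $\eta$ alone.

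The regularity then follows by bootstrapping. First, $\eta \in C^1(0,1)$ is given, and on any compact subinterval $[a,b]\subset(0,1)$ we have $\eta \geq c>0$, because $U_*$ is strictly decreasing with $U_*'<0$ everywhere. Since $A\in C^2$, $u-A(u)\in C^2$ and $1/\eta \in C^1$. Hence the right-hand side $2 - (u-A)/\eta$ is $C^1$, and therefore $\eta+\chi A \in C^2(0,1)$.

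The next step is where the main obstacle lies. We cannot conclude $\eta \in C^2$ directly, since $\eta = (\eta+\chi A) - \chi A$ and $A$ is only $C^2$. Fortunately only $C^2$ regularity of $\eta$ is needed. Indeed, $\eta+\chi A \in C^2$ and $A\in C^2$ give $\eta\in C^2$, so $1/\eta\in C^2$ on compact subintervals. Combined with $u-A(u)\in C^2$, this makes $(u-A)/\eta \in C^2$, and hence $(\eta+\chi A)' \in C^2$, i.e. $\eta+\chi A\in C^3(0,1)$.

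One point I would double-check is that $\eta$ is genuinely $C^1$ and positive. This follows from standard ODE theory: $U_*\in C^2$ solves a regular second-order ODE, $U_*'<0$, and $\eta = -U_*'\circ U_*^{-1}$, with the inverse function theorem providing the $C^1$ smoothness. With that in hand, the argument above closes and gives~\eqref{eta_regularity}.
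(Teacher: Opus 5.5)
Your proof is correct and follows the paper's own argument. Both derive $(\eta+\chi A)'(u) = 2 - \frac{u-A(u)}{\eta(u)}$ from the traveling wave ODE via $U_*''=\eta'(U_*)\eta(U_*)$, then bootstrap once using $A\in C^2$ and $\eta>0$ to get $\eta\in C^2$ and hence $\eta+\chi A\in C^3(0,1)$. One wording slip: the sentence saying you ``cannot conclude $\eta\in C^2$ directly'' should read $\eta\in C^3$, since your very next sentence correctly deduces $\eta\in C^2$.
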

    \begin{proof}
        Recall that the traveling wave profile function is given implicitly as
        \begin{equation}
            \eta(U_*) = -U_*'. \label{rcl_twpf}
        \end{equation}
        The traveling wave is a strong solution, so $U_*' \in C^1(0, 1)$, implying that $\eta \in C(0, 1)$.
        Therefore, using \eqref{rcl_twpf} with the differential equation \eqref{intro_TW_eqn} that $U_*$ satisfies, we find
        \[2\eta(U_*) = \chi A'(U_*)\eta(U_*) + \eta'(U_*)\eta(U_*) + U_* - A(U_*).\]
        Hence, we can write
        \[U_* - A(U_*) = \eta(U_*)(2-\eta'(U_*)) - \chi A'(U_*)\eta(U_*).\]
        Since $U_*$ is monotonically decreasing and continuous, with $U_*(-\infty) = 1$ and $U_*(+\infty) = 0$,
        for all $u \in [0, 1]$, we have
        \begin{equation}
            u-A(u) = \eta(u)(2-\eta'(u)) - \chi A'(u)\eta(u). \label{rcl_pulled_identity}
        \end{equation}
        Dividing by $\eta$ in \eqref{rcl_pulled_identity} and rearranging yields
        \begin{equation}
            \eta'(u) + \chi A'(u) = 2- \frac{u - A(u)}{\eta(u)}. \label{rcl_rearranged_pulled_identity}
        \end{equation}
        Recalling the assumptions \eqref{A_assumptions} on $A$, the fact that $\eta(u) \in (0, 1)$,
        and that $\eta \in C^1(0, 1)$, it follows that
        \[\eta' + \chi A' \in C^1(0, 1).\]
        In particular, $\eta' \in C^1(0, 1)$.
        We can now differentiate both sides of \eqref{rcl_rearranged_pulled_identity} to obtain
        \begin{equation}
            \eta''(u) + \chi A''(u) = -\frac{1- A'(u)}{\eta(u)} + \frac{(u-A(u))\eta'(u)}{\eta(u)^2}. \label{second_deriv_identity}
        \end{equation}
        Once again, by the assumptions \eqref{A_assumptions} on $A$, the first summand in
        \eqref{second_deriv_identity} is in $C^1(0, 1)$. The second summand is also in $C^1(0, 1)$,
        since $\eta' \in C^1(0, 1)$. Therefore, $\eta'' + \chi A'' \in C^1(0, 1)$, which
        completes the proof.
    \end{proof}

    Now we are ready to prove the concavity of $\eta + \chi A$.
    \begin{proof}[Proof of {\Cref{concave}}]
        First we show that for $u$ small enough, the desired inequality holds.
        To do this, use \eqref{second_deriv_identity} along with the identity
        \[\frac{u-A(u)}{\eta(u)} = 2-\eta'(u) - \chi A'(u),\]
        to find that
        \begin{align*}
        \eta''(u) + \chi A''(u) = \frac{A'(u)-1}{\eta(u)} - \frac{\eta'(u)(\chi A'(u) + \eta'(u)-2)}{\eta(u)}.
        \end{align*}
    Multiplying by $\eta$ yields
    \begin{equation}\label{eta_reduction_identity}
        \begin{split}
            \eta(u)(\eta''(u) + \chi A''(u)) &= A'(u) - 1 - \chi \eta'(u)A'(u)-\eta'(u)^2 + 2\eta'(u)\\
                &= -(1-\eta'(u))^2 + A'(u)(1-\chi \eta'(u)).
        \end{split}
    \end{equation}
    When $u$ is small, $\eta'(u) \sim 1 + \frac{1}{\log u}$ by \cite[Lemma 3.4]{an2023locationdeterminesconvergencerate}, while $A'(u) \leq Cu$.
    As a result,
    \begin{equation}\label{small_u_concave}
        \eta(u)(\eta''(u) + \chi A''(u)) \leq \frac{-1}{(\log u)^2} + Cu < 0.
    \end{equation}
    Since $\eta(u) > 0$ for $u \in (0, 1)$, the concavity holds for $u$ small enough.

    Our goal is to show that $\eta''(u) + \chi A''(u)$ remains nonpositive for $u \leq 1$.
    We argue by way of contradiction.  Suppose that $\eta'' + \chi A''$ does eventually becomes positive.
    Therefore, there exists
    \begin{equation}\label{crossing_point}
        \bar u = \inf \{u \in (0, 1) : \eta(u)^2 (\eta''(u) + \chi A''(u)) > 0\}.
    \end{equation}
    Note that $\bar u > 0$ by \eqref{small_u_concave}, and at $\bar u$ we have
    \begin{equation}\label{crossing_properties}
       \eta''(\bar u) + \chi A''(\bar u) = 0 \quad \text{ and } \quad [\eta^2 (\eta'' + \chi A'')]'(\bar u) \geq 0. 
    \end{equation}
    Using \eqref{second_deriv_identity}, we have the identity
    \begin{equation} \label{useful_second_deriv_identity}
        [\eta^2 (\eta'' + \chi A'')]'(u) = \left(\eta(u)(A'(u)-1) + \eta'(u)(u-A(u))\right)'= \eta''(u)(u-A(u)) + A''(u)\eta(u).
    \end{equation}
    Evaluating \eqref{useful_second_deriv_identity} at $\bar u$ and keeping in mind \eqref{crossing_properties}, we find
    \begin{equation}\label{not_strict_ineq}
        0 \leq [\eta^2(\eta'' + \chi A'')]'(\bar u) = \eta''(\bar u)(\bar u-A(\bar u)) + A''(\bar u)\eta(\bar u) = A''(\bar u)(\eta(\bar u) - \chi(\bar u- A(\bar u))).
    \end{equation}

    The assumption \eqref{A_assumptions} that $A(u)/u$ is increasing and convex implies that $A''(\bar u) > 0$.
    Therefore, the previous inequality \eqref{not_strict_ineq} implies $\eta(\bar u) - \chi (\bar u - A(\bar u)) \geq 0$. In fact,
    we can show strict inequality. 
    If not, then $\eta(\bar u) = \chi(\bar u - A(\bar u))$.
    Multiplying \eqref{second_deriv_identity} by $\eta^2$,
    we obtain
        \begin{equation}\label{strict_ineq_eq}
            \begin{split}
                0 &= \eta(\bar u)^2(\eta''(\bar u) + \chi A''(\bar u)) = \eta(\bar u) (A'(\bar u)-1) + \eta'(\bar u)(\bar u - A(\bar u))\\
                &= \chi (\bar u - A(\bar u))(A'(\bar u) - 1) + \eta'(\bar u)(\bar u - A(\bar u))\\
                &= (\bar u - A(\bar u))(\chi(A'(\bar u) - 1) +  \eta'(\bar u)).
            \end{split}
        \end{equation}
        Since $\bar u - A(\bar u) > 0$, \eqref{rcl_rearranged_pulled_identity} yields
        \[0 = -\chi + \chi A'(\bar u) + \eta'(\bar u) = \frac{A(\bar u)-\bar u}{\eta(\bar u)} +2 - \chi = -\frac{1}{\chi} + 2 - \chi.\]
        Solving for $\chi$ gives us a single solution $\chi = 1$, but we had assumed $\chi < 1$.
        This shows at $\bar u$
        \begin{equation}\label{cross_strict}
            \eta(\bar u) - \chi (\bar u - A(\bar u)) > 0.
        \end{equation}
        We are now able to complete the contradiction argument.

    With \eqref{cross_strict} and the definition \eqref{crossing_point} of $\bar u$,
    we can find $\tilde u \in (\bar u , 1)$ such that
    \begin{equation}\label{pos_properties}
        \eta''(\tilde u) + \chi A''(\tilde u) > 0, \quad [\eta^2(\eta'' + \chi A'')]'(\tilde u) > 0, \quad \text{ and } \quad \eta(\tilde u) - \chi(\tilde u - A(\tilde u)) > 0.
    \end{equation}
    The first two properties of \eqref{pos_properties} and that
    \[\lim_{u \to 1^-} \eta^2(\eta'' + \chi A'') = 0,\]
    imply that $\eta^2(\eta'' + \chi A'')$ must increase to a positive local maximum;
    that is, there exists a point $v \in (\tilde u, 1)$ such that
    \begin{equation}\label{local_max_properties}
        [\eta^2(\eta'' + \chi A'')]'(v) = 0 \quad \text{ and } \quad  \eta'' + \chi A'' > 0 \text{ on } [\tilde u, v].
    \end{equation}
    Using \eqref{useful_second_deriv_identity} in tandem with \eqref{local_max_properties},
    \begin{equation} \label{contradiction_ineq}
        \begin{split}
            0 = [\eta^2(\eta'' + \chi A'')]'(v) &= \eta''(v)(v-A(v)) + A''(v)\eta(v) \geq -\chi A''(v) (v-A(v)) + A''(v) \eta(v)\\
        &= A''(v)(\eta(v)-\chi(v-A(v))).
        \end{split}
    \end{equation}
    Since $A''(v) \geq 0$, \eqref{contradiction_ineq} implies
    \begin{equation}\label{useful_contradiction_ineq}
        \eta(v)-\chi(v-A(v)) \leq 0.
    \end{equation}
    On the other hand, since $\eta(\tilde u) - \chi(\tilde u-A(\tilde u)) > 0$, rearranging \eqref{rcl_pulled_identity} gives
        \begin{equation}\label{contradition_useful_1}
            1-\chi(2-\eta'(\tilde u)-\chi A'(\tilde u)) = \frac{\eta(\tilde u) - \chi(\tilde u-A(\tilde u))}{\eta(\tilde u)} > 0.
        \end{equation}
        In addition,
        \begin{equation}\label{contradition_useful_2}
            (1-\chi(2-\eta'(u)-\chi A'(u)))' = \chi (\eta''(u) + \chi A''(u)).
        \end{equation} 
        Because $\eta'' + \chi A'' > 0$ on $[\tilde u, v]$, it follows that
        $\eta(v) - \chi(v-A(v)) > 0$, contradicting \eqref{useful_contradiction_ineq}
        and completing the proof.
    \end{proof}

    \begin{corollary}
        For $\chi \geq 0$,
        \begin{equation}\label{eta_lowerbound}
            \eta(u) - \chi (u-A(u)) \geq 0 \quad \text{ on }  [0, 1].
        \end{equation}
    \end{corollary}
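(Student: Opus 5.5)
Throughout, $\eta$ denotes the profile function of the minimal-speed traveling wave $U_*=U_{c_*}$ for the given $\chi\ge 0$. For $\chi\le 1$ we have $c_*=2$; for $\chi>1$ the value of $c_*$ will be controlled below. The plan is a one-dimensional comparison argument on the function
\[
q(u) := \eta(u) - \chi\,(u - A(u)), \qquad u\in[0,1].
\]
It avoids the concavity of \Cref{concave}, so it works uniformly in $\chi\ge 0$. The case $\chi = 0$ is trivial since $\eta>0$ on $(0,1)$, so assume $\chi>0$.

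\textbf{Step 1: the profile identity at a general speed.} Exactly as in the proof of \Cref{eta_regularity_lemma}, substitute $U_c' = -\eta(U_c)$ and $U_c'' = \eta'(U_c)\eta(U_c)$ into \eqref{intro_TW_eqn} and use that $U_c$ sweeps out $(0,1)$. This gives, for the profile of a wave of speed $c$,
\[
\eta(u)\bigl(c - \chi A'(u) - \eta'(u)\bigr) = u - A(u), \qquad u\in(0,1),
\]
which reduces to \eqref{rcl_pulled_identity} when $c=2$. Differentiating $q$ and using this identity yields
\[
q'(u) = c - \chi - \frac{u-A(u)}{\eta(u)} .
\]
On the set where $q(u)<0$ we have $0<\eta(u)<\chi(u-A(u))$, hence $\frac{u-A(u)}{\eta(u)}>\frac1\chi$ strictly. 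Therefore
\[
q'(u) < c - \Bigl(\chi + \tfrac{1}{\chi}\Bigr) \qquad\text{wherever } q(u)<0 .
\]

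\textbf{Step 2: $c_*\le \chi+1/\chi$.} For $\chi\le1$ this follows from $c_*=2$ and the AM--GM inequality. For general $\chi>0$, I would instead exhibit an explicit wave. Let $V$ solve $V' = -\chi(V-A(V))$ with $V(0)=1/2$. Since $u-A(u)>0$ on $(0,1)$, vanishes at $0$ and $1$, and has $(u-A)'(0)=1$, the function $V$ is decreasing with $V(-\infty)=1$ and $V(+\infty)=0$. Then $V''=\chi^2(1-A'(V))(V-A(V))$. Plugging into \eqref{intro_TW_eqn} gives
\[
(V-A)\bigl[c\chi - \chi^2 A' - \chi^2(1-A') - 1\bigr]=0,
\]
which holds exactly when $c=\chi+1/\chi$. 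So a traveling wave of speed $\chi+1/\chi$ exists, and by the characterization of $c_*$ from \cite{hadeler_rothe} (waves exist iff $c\ge c_*$), we get $c_*\le \chi+1/\chi$. Combined with Step 1, this gives $q'<0$ at every point of $(0,1)$ where $q<0$.

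\textbf{Step 3: conclusion.} The function $\eta$ extends continuously to $[0,1]$ with $\eta(0)=\eta(1)=0$, since $U_*'\to0$ at $\pm\infty$. Hence $q$ is continuous on $[0,1]$, $C^1$ on $(0,1)$, and $q(0)=q(1)=0$. Suppose $q(u_0)<0$ for some $u_0\in(0,1)$. Let $u_1=\sup\{u\in[u_0,1): q<0 \text{ on } [u_0,u]\}$. On $[u_0,u_1)$ we have $q'<0$, so $q(u)\le q(u_0)<0$ there. By continuity $q(u_1)\le q(u_0)<0$, which forces $u_1=1$ (otherwise $q<0$ slightly beyond $u_1$, contradicting the supremum). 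But then $q(1)<0$, contradicting $q(1)=0$. Hence $q\ge0$ on $[0,1]$, which is \eqref{eta_lowerbound}.

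\textbf{Main obstacle.} The delicate points are Step 2 and the endpoint behaviour. For Step 2, the sign computation needs $c_*\le \chi+1/\chi$, and for $\chi>1$ this rests on the existence and minimality theory of \cite{hadeler_rothe} applied to the explicit wave. Note that for $\chi>1$ one expects the explicit wave to be the minimal one, i.e.\ $\eta=\chi(u-A)$ and equality in \eqref{eta_lowerbound}; the argument does not need this. For the endpoints, one must justify the continuous extension of $\eta$ to $[0,1]$ with $\eta(0)=\eta(1)=0$ so that $q(1)=0$. This follows from $U_*\in C^2$ with $U_*'(\pm\infty)=0$ and monotonicity of $U_*$.
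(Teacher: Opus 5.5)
Your proof is correct and is essentially the paper's argument. The paper also works with $\gamma(u)=\eta(u)-\chi(u-A(u))$ and uses \eqref{rcl_pulled_identity} to write $\gamma'=2-\chi-\frac{u-A(u)}{\eta(u)}$. It reaches the same contradiction $0<2-\chi-\frac{1}{\chi}=-\frac{(\chi-1)^2}{\chi}\le 0$, phrased at an interior negative minimum where $\gamma'=0$ rather than through your rightward sweep to $q(1)=0$. Your real addition is Step 2: by working at a general speed and exhibiting the explicit wave of speed $\chi+\frac{1}{\chi}$, you cover $\chi>1$, where the paper's use of the $c_*=2$ identity is not justified, at the cost of relying on the existence and minimality theory of \cite{hadeler_rothe} beyond $\chi\le 1$.
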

    We do not use \eqref{eta_lowerbound} elsewhere in this paper, but we include it
    in the case the reader might find it useful.
    \begin{proof}
        Since $\eta \geq 0$ by construction, the case $\chi = 0$ is immediate.
        Now we consider the case $\chi > 0$.
        For convenience, we set
        \begin{equation}
            \gamma(u) := \eta(u) - \chi (u-A(u)).
        \end{equation}
        By way of contradiction suppose at some
        point in $(0, 1)$, $\gamma(u) < 0$. Because
        $\gamma(0) = 0 = \gamma(1)$, there exists a point $u_0 \in (0, 1)$
        at which a negative local minimum occurs. Therefore, at $u_0$,
        \begin{equation}\label{gamma_local_min}
            \gamma(u_0) < 0 \quad \text{ and } \quad \gamma'(u_0) = 0.
        \end{equation}
        Using \eqref{rcl_pulled_identity},
        \[0 = \gamma'(u_0) = \eta'(u_0) - \chi(1-A'(u_0)) = 2- \chi - \frac{u_0 - A(u_0)}{\eta(u_0)}.\]
        Due to the first property of \eqref{gamma_local_min},
        \[-\frac{u_0 - A(u_0)}{\eta(u_0)} < -\frac{1}{\chi}.\]
        Therefore,
        \begin{equation}\label{eta_contradiction}
            0 < 2- \chi - \frac{1}{\chi} = \frac{-1}{\chi}(\chi - 1)^2 \leq 0.
        \end{equation}
        Hence, we arrive at a contradiction and conclude
        \eqref{eta_lowerbound}.
    \end{proof}

    Lastly, we give the proof of \Cref{concave_technical_lemma}, which is the key estimate in proving the weighted
    $L^1$-estimates in \Cref{pulled_adjoint_subsection}.
    \begin{proof}[Proof of {\Cref{concave_technical_lemma}}]
        By \Cref{asympotics_lemma}, if $\tilde u(t, x) < \frac{1}{100}$, then
        \begin{equation}
            2\eta(\tilde u)(\eta''(\tilde u) + \chi A''(\tilde u)) \leq -\frac{2}{\log\left(\tilde u\right)^2} + \frac{4\log\log\left(\frac{\theta}{\tilde u}\right)}{\log\left(\frac{\theta}{\tilde u}\right)^3} +  \frac{C}{\log\left(\frac{1}{\tilde u}\right)^3}. \label{precise_asympotics_expansion}
        \end{equation}
        In \eqref{precise_asympotics_expansion}, $\theta$ is a positive constant such that the minimal speed traveling wave $U_*$ has asymptotics
        \[U_*(x) \sim \theta x e^{-x}
        \quad \text{as } x \to \infty.\]
        Recall the preliminary upper bound on $\tilde u(t, x)$ in \eqref{pulled_L1_prelim}. This bound implies that
        there exists $N > 0$ so that $\tilde u (t, x) < \frac{1}{100}$ for all $x > N$, independent of $t$.
        Using \Cref{lowerbound_lemma}, there exists $C > 0$, so that for
        $t \geq 1$ and $x \geq 0$,
        \begin{equation}
            \tilde u(t, x) \geq Cx e^{-x}e^{-\frac{x^2}{4t}}. \label{lower_bound}
        \end{equation}
        Note a lower bound on $\tilde u$ is needed because $\sfrac{-1}{\log(\tilde u)^2}$ is decreasing in $\tilde u$ for $0 < \tilde u < 1$.

        First, insert \eqref{lower_bound} into the first term in the expansion \eqref{precise_asympotics_expansion} and compute
        \begin{equation}\label{good_term}
            \begin{split}
                -\frac{2}{\log(\tilde u(t, x))^2} &\leq \frac{-2}{\log(Cxe^{-x}e^{-\frac{x^2}{4t}})^2} = \frac{-2}{(x + \frac{x^2}{4t} - \log(Cx))^2} = \frac{-2}{x^2}\frac{1}{\left(1 + \frac{x}{4t} - \frac{\log(Cx)}{x}\right)^2}\\
                &\leq \frac{-2}{x^2}\left(1 + \frac{2\log(Cx)}{x} - \frac{x}{2t}\right)
                = \frac{-2}{x^2} - \frac{4\log(Cx)}{x^3} + \frac{1}{xt}
                \\&
                = \frac{-2}{x^2} - \frac{4\log(x)}{x^3} - \frac{4\log(C)}{x^3} + \frac{1}{xt}.
            \end{split}
        \end{equation}
        In the second inequality, we used that $-(1+z)^{-2} \leq -(1-2z)$ for $z \geq -1$,
        and that we can further increase $N$ so that
        \[-\frac{\log(Cx)}{x} \geq -1, \quad \text{for all } x \geq N.\]

        For $\tilde u$ small enough, the second term in \eqref{precise_asympotics_expansion} is increasing. Therefore, since
        $\tilde u \leq CU_*(x) \leq C (x+1) e^{-x}$, we can increase $N$ again if necessary so that for $x \geq N$,
        \begin{equation}\label{bad_term}
            \begin{split}
                \frac{4\log\log\left(\frac{\theta}{u}\right)}{\log\left(\frac{\theta}{u}\right)^3}
                &\leq \frac{4\log(\log(\theta) + x - \log(Cx))}{(x+ \log(\theta) - \log(Cx))^3}
            \leq \frac{4\log(x)}{x^3} \frac{1}{\left(1- \frac{\log(Cx)}{x} \right)^3}
            \\&
            \leq \frac{4\log(x)}{x^3}\left(1 + \frac{C\log(Cx)}{x}\right).
            \end{split}
        \end{equation}
        In the last inequality, we used that $(1-z)^{-3} \leq 1+Cz$, for $z \in [0, 1/2]$.

        Combining \eqref{good_term} and \eqref{bad_term}, we have that
        \begin{equation}
            2\eta(\tilde u)(\eta''(\tilde u) + \chi A''(\tilde u)) \leq \frac{-2}{x^2} + \frac{1}{xt} - \frac{4\log(C)}{x^3} + \frac{C\log(x)^2}{x^4} + \frac{C}{\log\left(\frac{1}{\tilde u}\right)^3}. \label{pulled_expansion}
        \end{equation}
        The last summand in~\eqref{pulled_expansion} can be bounded using the fact that by \eqref{pulled_L1_prelim}, $\tilde u(t, x) \leq Ce^{-x/2}$. Thus,
        \[2\eta(\tilde u)(\eta''(\tilde u) + \chi A''(\tilde u)) \leq \frac{-2}{x^2} + \frac{1}{xt} + \frac{C}{x^3}.\]
        The proof is complete.
    \end{proof}

    \section{Proof of the weighted Nash inequality}\label{weighted_nash_ineq_proof_section}
    
    One of our main tools in establishing \Cref{shape_defect_theorem} is the weighted Nash inequality \Cref{weighted_nash_ineq_lemma}.  We note that the standard, and, indeed, original argument to establish the (unweighted) Nash inequality is based on the Fourier transform~\cite{Nash}.  It is not clear how to adapt this in our setting given the half-space domain and the weighted $L^1$-term.  As a result, we develop a real-space approach instead.
    
    \begin{proof}[Proof of {\Cref{weighted_nash_ineq_lemma}}]
        By density, assume that $\varphi$ is smooth.  Let $L > 0$, $k$ be a nonnegative integer, and $a$ minimize $\varphi(x)$ over $[kL, (k+1)L]$.
        For a positive function $A(x)$ to be chosen later,
        \begin{equation}\label{nash_start}
            0 \leq \int_{kL}^a (x-kL) \left(\varphi_x A(x) + \frac{\varphi}{A(x)}\right)^2 dx = \int_{kL}^a (x-kL) \left(\varphi_x^2 A(x)^2 + \frac{\varphi^2}{A(x)^2}\right) dx + \int_{kL}^a (x-kL)(\varphi^2)_x dx.
        \end{equation}
        We integrate by parts the last integral in \eqref{nash_start} to find
        \begin{equation}\label{nash_ibp}
           \int_{kL}^a (x-kL)(\varphi^2)_x dx = (a-kL) \varphi(a)^2 - \int_{kL}^a \varphi^2 dx. 
        \end{equation}
        Using \eqref{nash_ibp} in \eqref{nash_start} and rearranging yields
        \begin{equation}\label{e.082401}
        	\int_{kL}^a \varphi^2 \left(1 - \frac{x-kL}{A(x)^2}\right) dx
			\leq \int_{kL}^a (x-kL) \varphi_x^2 A(x)^2 dx + (a-kL) \varphi(a)^2.
        \end{equation}
        We choose $A(x)^2 = 2(x-kL)$ so that the term on the left hand side of~\eqref{e.082401} becomes a pure $L^2$-type term with a positive coefficient.   Recall that $kL \leq a \leq (k+1)L$, so that $0\leq a-kL \leq L$. Hence,
        \begin{equation}
            \int_{kL}^a \varphi^2 dx \leq 4L^2 \int_{kL}^a \varphi_x^2 dx + 2L\varphi(a)^2. \label{left}
        \end{equation}

        On $[a, (k+1)L]$, we proceed in a similar way:
        \begin{align*}
            0 &\leq \int_a^{(k+1)L} ((k+1)L-x)\left(\varphi_x A(x) - \frac{\varphi}{A(x)}\right)^2 dx\\
            &= \int_a^{(k+1)L} ((k+1)L-x) \left(\varphi_x^2 A(x)^2 + \frac{\varphi^2}{A(x)^2}\right)dx - \int_a^{(k+1)L} ((k+1)L-x)(\varphi^2)_x dx\\
            &= \int_a^{(k+1)L} ((k+1)L-x) \left(\varphi_x^2 A(x)^2 + \frac{\varphi^2}{A(x)^2}\right)dx + ((k+1)L - a)\varphi(a)^2 - \int_a^{(k+1)L}\varphi^2 dx.
        \end{align*}
        In the last inequality, we integrated by parts.
        As before, we rearrange the above inequality to get
        \[\int_a^{(k+1)L} \varphi^2\left(1 - \frac{(k+1)L-x}{A(x)^2}\right)dx \leq \int_a^{(k+1)L} ((k+1)L-x) \varphi_x^2 A(x)^2 dx + ((k+1)L-a)\varphi(a)^2.\]
        Choosing $A(x)^2 = 2((k+1)L-x)$, we obtain
        \begin{equation}
            \int_a^{(k+1)L} \varphi^2dx \leq 4L^2\int_a^{(k+1)L} \varphi_x^2 dx + 2L\varphi(a)^2. \label{right}
        \end{equation}
        
        Summing \eqref{left} and \eqref{right} gives us
        \begin{equation}\label{e.middle of Nash}
        		\int_{kL}^{(k+1)L} \varphi^2dx \leq 4L^2\int_{kL}^{(k+1)L} \varphi_x^2 dx + 4L\varphi(a)^2.
        \end{equation}
        Except for the boundary term $4L\varphi(a)^2$, the other terms in \eqref{e.middle of Nash}
        are as desired.
	    To deal with the boundary term, note that, since $\beta \geq 0$,
        \[\int_{kL}^{(k+1)L} x^\beta dx \geq \frac{(k+1)^\beta L^{\beta + 1}}{C_\beta }.\]
        Therefore, we can write
        \[
        		L \varphi(a)^2
			\leq L \varphi(a)^2 \left(\frac{C_\beta}{(k+1)^\beta L^{\beta+1}} \int_{kL}^{(k+1)L}   x^\beta dx \right)^2
			= \frac{C_\beta^2}{(k+1)^{2\beta} L^{2\beta+1}} \left(\int_{kL}^{(k+1)L}  \varphi(a) x^\beta dx \right)^2.
        \]
        Recall that $a$ minimizes $\varphi$ over $[kL, (k+1)L]$; that is, $\varphi(a) \leq \varphi(x)$ on the domain of integration.  Using this in the above, we find
        \begin{align*}
            L \varphi(a)^2 
            	\leq \frac{C_\beta^2}{(k+1)^{2\beta} L^{2\beta+1}} \left(\int_{kL}^{(k+1)L}  \varphi(x) x^\beta dx \right)^2.
        \end{align*}
        Folding this into~\eqref{e.middle of Nash} and using that $(k+1)^{2\beta} \geq 1$ yields
        \[\int_{kL}^{(k+1)L} \varphi^2dx
        		\leq 4L^2\int_{kL}^{(k+1)L} \varphi_x^2 dx + \frac{C_\beta^2}{L^{2\beta + 1}}\left( \int_{kL}^{(k+1)L} \varphi x^\beta dx\right)^2.\]
        This inequality holds true for all $k$, so we now sum over each nonnegative integer $k$ to get
        \begin{align*}
            \int_0^\infty \varphi^2 dx &\leq 4L^2 \int_0^\infty \varphi_x^2 dx + \frac{C_\beta^2}{L^{2\beta + 1}} \sum_{k=0}^\infty\left(\int_{kL}^{(k+1)L} \varphi x^\beta dx\right)^2\\
            &\leq 4L^2 \int_0^\infty \varphi_x^2 dx + \frac{C_\beta^2}{L^{2\beta + 1}} \left(\int_0^\infty \varphi x^\beta dx\right)^2.
        \end{align*}
        The proof is complete.
    \end{proof}

    \section{Front location for $0 \leq \chi < 1$} \label{front_location_section}
    We now present the proof of \Cref{front_location_theorem}.\eqref{i.pulled_front}.
    The lower bound is immediate using that $u_0$ is steep \Cref{d.steepness} and comparing to the pure
    reaction-diffusion equation
    \[u_t = u_{xx} + u-A(u).\]
    This just leaves 
    the upper bound on $m(t)$, which can be done by proving an upper bound on $u$. Recall that $L_0$ is given in \eqref{initial_data_assumption},
    and let $L_1$ be a positive constant defined below in \eqref{convenient_lowerbound}.
    The following lemma gives such an upper bound on $u$.
    \begin{lemma}\label{upperbound_prop}
        Suppose $0 \leq \chi < 1$ and $u_0$ satisfies the conditions of \Cref{main_result_theorem}. Then there exists positive constants $C$ and $T$ so that
        for all $t > 0$,
        \begin{equation}\label{pulled_upperbound}
            u\left(t, x + 2t - \frac{3}{2}\log t\right) \leq \begin{cases}
                CU_*(x) + \frac{C}{(t+T)^{1-\eta}} & x \in (-\infty, B]\\
                CU_*(x) + \frac{C}{t^{1-\eta}}e^{-(1-\eta)x} & x \in\left[B, \frac{3}{2}\log(t + T) + \frac{3}{2}\log(t) +B\right]\\
                C\left(x- \frac{3}{2}\log t\right)e^{-x}e^{- \frac{\left(x-\frac{3}{2}\log t - B\right)^2}{4(t+T)}} & x\in  \left[\frac{3}{2}\log(t + T) + \frac{3}{2}\log(t) + B, \infty\right),
            \end{cases}
        \end{equation}
        with $B := 2T + L_0 + L_1$ and $\eta > 0$ that can be chosen arbitrarily small.
    \end{lemma}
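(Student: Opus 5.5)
The bound will follow from the comparison principle: build a supersolution of \eqref{main_eq} in the moving frame $\tilde x = x - 2t + \frac{3}{2}\log t$ by gluing two pieces. Ahead of the front we follow \cite{shortproof}; behind the front we follow \cite{giletti2022monostablepulledfrontslogarithmic} and use a multiple of the traveling wave. Throughout, the sign of $\chi$ only enters through the advection term.

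\textbf{Ahead of the front.} First write $u(t,x)=e^{-(x-2t)}p(t,x-2t)$. Since $A(u)\ge 0$, $A'\ge 0$ and $u_x\le 0$ (steepness, \Cref{d.steepness}), the function $u$ is a subsolution of $u_t = u_{xx}+u-(1-\chi)A(u)+\chi A'(u)|u_x|$. Ahead of the front $u$ is small, so $A(u),A'(u)u=O(u^2)$, and the right-hand side is dominated by the linearization $u_t=u_{xx}+u$.

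For $p$ this becomes the heat equation $p_t=p_{yy}$. Impose a Dirichlet condition at a moving boundary $y=\frac32\log(t+T)+B'$ and take the explicit solution
\[\bar p(t,y)=\frac{C\,(y-\ell(t))}{(t+T)^{3/2}}\,e^{-(y-\ell(t))^2/4(t+T)},\qquad \ell(t)=\tfrac32\log(t+T)+B'.\]
Undoing the conjugation and the logarithmic shift, this is exactly the third line of \eqref{pulled_upperbound}, up to the factor $e^{O(1)}$ coming from $t^{3/2}$ versus $(t+T)^{3/2}$. The perturbation $\ell'(t)\bar p_y$ has the wrong sign. The standard fix, as in \cite{shortproof}, is to multiply by $(1+\epsilon(t))$ or to absorb it with an $O((t+T)^{-1})$ correction; I will do that here.

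\textbf{Behind the front.} On $(-\infty,B]$ I compare with $C\,U_*(x-2t+\frac32\log t)$ plus a small correction. Because the speed is now $2-\frac{3}{2t}$ rather than $2$, the function $U_*(\cdot - 2t+\frac32\log t)$ has residual $\frac{3}{2t}|U_*'|$. The correction must absorb this residual and the error from $C>1$ in the nonlinearity, i.e.\ the term $A(CU_*)-CA(U_*)$, which has a good sign by convexity of $A(u)/u$ as long as $C U_*$ stays in $[0,1]$ or we cap at $1$.

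For the correction I use $\frac{C}{(t+T)^{1-\eta}}$ on the left and $\frac{C}{t^{1-\eta}}e^{-(1-\eta)x}$ in the intermediate zone. The function $e^{-(1-\eta)x}$ is a supersolution of the linearized moving-frame operator with margin $\eta(1-\eta)$ (indeed $\lambda^2-2\lambda+1<0$ fails, so the margin actually comes from $-(1-\eta)^2+2(1-\eta)-1=-\eta^2$ together with time decay). Choosing the time exponent $1-\eta$ beats $\frac{3}{2t}|U_*'|\lesssim \frac{1}{t}xe^{-x}$ in the intermediate region, whose width is $O(\log t)$. On the left, where $1-A'(u)<0$, a spatially constant decaying correction suffices.

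\textbf{Main obstacle: matching.} At $x=B$ and at $x=\frac32\log(t+T)+\frac32\log t+B$ we need the pieces to be ordered, so that the minimum of supersolutions is a supersolution, and the Dirichlet boundary must lie inside the region where the middle piece dominates. At the outer junction, $U_*(x)\sim xe^{-x}$ must be compared with the Gaussian piece, which behaves like $(x-\frac32\log t)e^{-x}$ there. Choosing $B=2T+L_0+L_1$, with $L_1$ coming from a lower bound on $U_*$, and using the $e^{-(1-\eta)x}/t^{1-\eta}$ term to dominate near the Dirichlet point, should close this.

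Initial ordering is handled by $u_0=0$ for $x\ge L_0$ and by taking $T$ large, so the supersolution at $t=0$ exceeds $\1_{x\le L_0}$. I expect the delicate step to be verifying the supersolution inequality in the intermediate zone including the $\chi A'(u)u_x$ advection term. Where $u$ is not small, that term is controlled by steepness, which gives $|u_x|\le$ a bound comparable to $\eta(u)$ plus $w$. I would use this to bound the advection by $O(u\cdot |\bar u_x|)$ and absorb it into the $-(1-\chi)A$ term.
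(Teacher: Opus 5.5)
Your proposal has a genuine gap in the intermediate zone, which is where the content of the lemma lies. Write $\tilde x = x-2t+\frac32\log t$ and $Lv = v_t - v_{xx} - v + A(v) + \chi A'(v)v_x$. You correctly find $L[U_*(\tilde x)] = \frac{3}{2t}U_*'(\tilde x) < 0$, but your corrector $\psi = Ct^{-(1-\eta)}e^{-(1-\eta)\tilde x}$ cannot absorb it. A direct computation gives
\[
(\partial_t - \partial_{xx} - 1)\psi = -\Big(\eta^2 + \frac{5(1-\eta)}{2t}\Big)\psi < 0 .
\]
So $\psi$ is a \emph{sub}solution of the linearization; your own parenthetical produces $-\eta^2$, which is the wrong sign. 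An exponential of rate $1-\eta$ travels at speed $(1-\eta)+(1-\eta)^{-1}>2$, so in the speed-$2$ frame it would have to grow like $e^{\eta^2 t}$, and time decay only makes things worse.

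The nonlinear terms do not rescue it. The term $A(U_*+\psi)-A(U_*) \le A'(U_*+\psi)\psi$ is negligible once $U_*$ is small. Both advection contributions, $\chi(A'(U_*+\psi)-A'(U_*))U_*'$ and $\chi A'(U_*+\psi)\psi_x$, are negative. Hence $L[U_*+\psi]<0$ on most of $[B,\frac32\log(t+T)+\frac32\log t+B]$.

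The prefactor $C>1$ hurts rather than helps: $L[CU_*]$ contains $\chi C U_*'(A'(CU_*)-A'(U_*))<0$. For $A(u)=u^2$ this beats $A(CU_*)-CA(U_*)$ ahead of the front once $\chi>1/2$. In the paper, $CU_*(x)$ arises only from a shift, $U_*(x-C)\le CU_*(x)$. The term $t^{-(1-\eta)}e^{-(1-\eta)x}$ in \eqref{pulled_upperbound} is not a corrector at all.

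What is missing is Giletti's quasi-stationary construction. To the left of the point where $U_*=1-\delta$, $A'(U_*)>1$ and $U_*+\gamma/t$ works, as you say. To the right, one replaces $U_*$ by the solution $U_{3/2}(\cdot;t)$ of the traveling-wave ODE with subcritical speed $2-\frac3t$, so the residual becomes $-\frac{3}{2t}U_{3/2}'>0$. The $(\frac3t e^{-z})^{1-\eta}$ term is then the ODE-stability error $|U_{3/2}(z;t)-U_*(z-Z_\delta^\infty)|$ of Lemma \ref{approx_lemma}. One must also show that $U_{3/2}$ stays positive up to $z\approx\frac72\log t$, which uses $U_*\sim\theta z e^{-z}$, i.e.\ $\chi<1$. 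This lets it be glued by a minimum on $[2t+\log t,2t+\frac32\log t]$ to the ahead-of-front barrier.

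Your gluing is also arranged the wrong way round. A Dirichlet solution vanishes at its boundary, so at the left end of any overlap it must lie above the other barrier, and the switch happens further right. The paper puts the Dirichlet point at $x=2t$ (in the final bound, $\tilde x=\frac32\log t+B$, inside the middle region) and uses Giletti's $\bar u_2$ on $x\ge 2t$. It takes $\min\{\bar u_2,\bar\phi\}$ only beyond $2t+\frac32\log t$. You place the Dirichlet point essentially at the left end of the third region, where your minimum would be $0$.

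The advection also cannot be dismissed as \emph{dominated by the linearization}. The term $\chi A'(u)|u_x|=O(u^2)$ has the bad sign and needs an explicit margin, which the paper gets from the factor $1-t^{-1/4}$ in $\bar\phi$. Nor can it be absorbed into $-(1-\chi)A$: for $A(u)=u^2$ one has $\chi A'(u)|u_x|\approx 2\chi u^2$ ahead of the front, so this works only for $\chi\le 1/3$. Moreover, steepness gives the lower bound $-u_x\ge\eta(u)$, not an upper bound. Since the front sits at $2t-\frac12\log t$ when $\chi=1$, the lemma fails there, so a correct argument must use $\chi<1$ at an identifiable step.
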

    We first show how \Cref{upperbound_prop} implies \Cref{front_location_theorem}.
    \begin{proof}[Proof of {\Cref{front_location_theorem}}]
        Let $\lambda \in (0, 1)$ and $x_\lambda(t)$ be such that
        \begin{equation}
            u(t, x_\lambda(t)) = \lambda.
        \end{equation}
        Because the lower bound on the
        front is already known, it is sufficient to prove that there exists $M > 0$ such that for all $t > 0$
        \begin{equation}\label{front_sufficient}
            x_\lambda(t) + 2t - \frac{3}{2}\log t \leq M.
        \end{equation}

        Suppose by way of contradiction that $x_\lambda(t) + 2t - \frac{3}{2}\log t \to \infty$ as $t \to \infty$. If
        $x_\lambda(t) \leq \log(t + T) + \frac{3}{2}\log(t) + B$, we use the middle case in \eqref{pulled_upperbound}
        to find
        \[0 < \lambda = u\left(x_\lambda(t) + 2t - \frac{3}{2}\log t\right) \leq CU_*(x_\lambda(t)) + \frac{C}{(t+T)^{1-\eta}}e^{-(1-\eta)x_{\lambda}(t)} \xrightarrow{t \to \infty} 0.\]
        Therefore, this case is not possible.
        On the other hand if $x_\lambda(t) \geq \log(t + T) + \frac{3}{2}\log(t) + B$, then by the last case in \eqref{pulled_upperbound},
        \[0 < \lambda = u\left(x_\lambda(t) + 2t - \frac{3}{2}\log t\right) \leq \frac{C \log(t+T)}{t^{5/2}}\xrightarrow{t \to \infty} 0.\]
        Both cases lead to contradictions, so \eqref{front_sufficient} is true and
        completes the proof of \Cref{front_location_theorem}.
    \end{proof}

    The following result gives a useful estimate on $u$ shifted to the $2t-\frac{3}{2}\log t$ frame.
    \begin{corollary}\label{prelim_estimate_corollary}
        With the assumptions of \Cref{upperbound_prop}, there exists a $C > 0$ so that for all $x \in \R$,
        \begin{equation}\label{prelim_estimate_result}
            u\left(t, x + 2t - \frac{3}{2}\log t\right) \leq CU_*(x).
        \end{equation}
    \end{corollary}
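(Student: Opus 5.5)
The plan is to read \eqref{prelim_estimate_result} off from the three cases of \eqref{pulled_upperbound} in \Cref{upperbound_prop}. In each regime I will show that the right-hand side there is at most a constant multiple of $U_*(x)$. For this I need the two-sided behaviour of $U_*$: it is bounded below by a positive constant on $(-\infty,B]$, and $U_*(x)\geq c(1+x)e^{-x}$ for $x\geq 0$ by \eqref{pulled_asympotics}.

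\emph{First regime, $x\leq B$.} Here $U_*(x)\geq U_*(B)>0$. Together with $u\leq 1$, this gives $u\leq 1\leq U_*(B)^{-1}U_*(x)$ directly, so this case is trivial.

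\emph{Third regime.} Let $y:=x-\tfrac32\log t-B\geq 0$. I will use $\bigl(x-\tfrac32\log t\bigr)\leq x$ together with $e^{-y^2/(4(t+T))}\leq 1$. This bounds the right-hand side by $Cxe^{-x}\leq C'U_*(x)$, using the lower asymptotics of $U_*$ for large $x$. Since $x\geq B$ in this regime, no issue arises near $x=0$.

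\emph{Middle regime, $B\leq x\leq \tfrac32\log(t+T)+\tfrac32\log t+B$.} The term $CU_*(x)$ is already fine, so the task is to bound
\[
t^{-(1-\eta)}e^{-(1-\eta)x}\leq C\,xe^{-x}.
\]
This is equivalent to $e^{\eta x}\leq C\,x\,t^{1-\eta}$. On this interval $e^{\eta x}\leq C(t+T)^{3\eta/2}t^{3\eta/2}\leq C t^{3\eta}$ for $t\geq1$. So the inequality holds as soon as $3\eta\leq 1-\eta$, that is, $\eta\leq 1/4$, using $x\geq B\geq 1$ (we may enlarge $B$). Since $\eta$ in \Cref{upperbound_prop} may be chosen arbitrarily small, I fix such an $\eta$.

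\emph{Small times.} For $t\in(0,1]$ I cannot rely on the powers of $t$, because they blow up. Instead I will argue directly: $u(t,\cdot)$ is dominated by a solution of the linear equation $v_t=v_{xx}+v$ with initial data $\1_{\{x\leq L_0\}}$, using comparison since $u-A(u)\leq u$. The advection term needs care in that comparison; I would use a supersolution built from the traveling wave $U_*$ shifted far to the right, which handles the advection automatically. Near $t=0$ the shift $-\tfrac32\log t$ moves the frame to the right, which only helps, as $U_*$ is decreasing.

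\emph{Main obstacle.} I expect the hardest step to be the middle regime, specifically the $e^{\eta x}$ loss against the polynomial window. It is resolved exactly by the window length $O(\log t)$ combined with the freedom to take $\eta$ small, so that the resulting power of $t$ is absorbed. The small-time case should be routine via the traveling wave supersolution.
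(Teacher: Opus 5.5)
For $t\ge 1$ your argument is the one the paper has in mind. The paper omits the proof, saying the corollary follows immediately from \Cref{upperbound_prop} and the asymptotics of $U_*$. Your middle-window computation is correct and supplies the one detail the paper leaves implicit in ``$\eta$ arbitrarily small'': on a window of length $3\log t+O(1)$, the bound $t^{-(1-\eta)}e^{-(1-\eta)x}\le C(1+x)e^{-x}$ forces $\eta\le 1/4$.

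The small-time paragraph, however, would fail as sketched.

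\emph{Linear comparison.} Let $v$ solve $v_t=v_{xx}+v$ with data $\1_{\{x\le L_0\}}$. Since $v_x<0$, the advection contribution $\chi A'(v)v_x$ is negative. It is not dominated by $A(v)$, because ahead of the front $|v_x|/v\approx (x-L_0)/(2t)$ is unbounded. So $v$ is not a supersolution of \eqref{main_eq}.

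\emph{Traveling-wave supersolution.} This does not handle the advection automatically either. The exact solution $U_*(x-2t-x_0)<1$ cannot lie above data that equal $1$ on a half-line, such as $\1_{\{x\le 0\}}$. For $C>1$, the function $CU_*(x-2t-x_0)$ has residual $\chi C U_*'\bigl(A'(CU_*)-A'(U_*)\bigr)+A(CU_*)-CA(U_*)$. Its first term is negative, since $A$ is convex and $U_*'<0$. For $A(u)=u^2$ the residual equals $C(C-1)U_*\bigl(U_*+2\chi U_*'\bigr)$, which is negative far ahead of the front when $\chi>1/2$, because $U_*'/U_*\to-1$.

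There are two easy repairs.

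\emph{(a) Exponential supersolution.} Use $\min\{1,e^{Kt-(x-L_0)}\}$, which lies above $u_0$. For a pure exponential $|v_x|/v=1$, so the advection term $-\chi A'(v)v$ is absorbed once $K\ge 2+\chi\max_{[0,1]}A'$. This gives $u(t,y)\le \min\{1,Ce^{-y}\}\le CU_*(y)$ for $t\le 1$. Your remark that the shift $2t-\tfrac32\log t>0$ only helps, since $U_*$ is decreasing, then finishes.

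\emph{(b) No separate argument.} \Cref{upperbound_prop} is stated for all $t>0$, and reading it off already works for small $t$.
\begin{itemize}
\item The middle window is empty unless $t(t+T)\ge 1$. That bounds $t$ below by $1/(1+T)$ when $t\le 1$, so the powers of $t$ there are harmless.
\item In the third window, take $x\ge B$ (smaller $x$ are covered by $u\le 1$) and set $\ell=\tfrac32|\log t|$. Then $(x+\ell)\,e^{-(x-B+\ell)^2/(4(t+T))}\le (1+x)(1+\ell)\,e^{-\ell^2/(4(1+T))}\le C(1+x)$.
\end{itemize}
So your statement that the powers of $t$ ``blow up'' for $t\in(0,1]$ is a misdiagnosis, and the traveling-wave detour should be dropped or replaced by (a).
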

    \Cref{prelim_estimate_corollary} follows immediately from \Cref{upperbound_prop} and the asymptotics \eqref{pulled_asympotics}
    of $U_*$. We omit the proof.

    To prove \Cref{upperbound_prop},
    we begin by constructing a supersolution on the positive half-line. 
    Following the ideas of \cite{shortproof}, we analyze the linearized Dirichlet problem.
    \begin{lemma} \label{supersolution_lemma}
        For $x \geq 2t$, let
        \begin{equation}\label{linearized_soln}
            \phi(t, x) = \frac{K(x-2t)}{t^{3/2}}e^{-(x-2t)}e^{\frac{-(x-2t)^2}{4t}},
        \end{equation}
        where $K$ is a positive constant.
        Then there exists a $T > 0$ such that
        \begin{equation}
            \bar \phi(t, x) = \left(1- \frac{1}{t^{1/4}}\right) \phi(t, x) \label{supersolution}
        \end{equation}
        is a nonnegative supersolution of \eqref{main_eq} for $t > T$.
    \end{lemma}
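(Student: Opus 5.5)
Write $y = x - 2t \ge 0$. The plan is a direct verification that the operator
\[
\mathcal{N}[v] := v_t + \chi (A(v))_x - v_{xx} - v + A(v)
\]
is nonnegative at $v = \bar\phi$ for $t > T$. Nonnegativity of $\bar\phi$ on $x \ge 2t$ is clear once $T \ge 1$, and $\bar\phi = 0$ on the boundary $x = 2t$. The argument has three steps.

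\textbf{Step 1: the linear part.} First I would record that $\phi$ solves the linearized equation $\phi_t = \phi_{xx} + \phi$ exactly. Indeed, setting $\phi = e^{-(x-2t)} \psi(t, x-2t)$ reduces this equation to the heat equation $\psi_t = \psi_{yy}$. The function $\psi = K y\, t^{-3/2} e^{-y^2/4t}$ is the $y$-derivative of the heat kernel, hence a Dirichlet solution on $y > 0$. Consequently, with $\bar\phi = (1 - t^{-1/4})\phi$,
\[
\bar\phi_t - \bar\phi_{xx} - \bar\phi = \tfrac14 t^{-5/4}\phi ,
\]
which gives a positive contribution of size $\tfrac14 t^{-5/4}\phi$.

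\textbf{Step 2: the nonlinear terms.} Next I would write
\[
\chi(A(\bar\phi))_x + A(\bar\phi) = A(\bar\phi) + \chi A'(\bar\phi)\bar\phi_x .
\]
Since $A(\bar\phi) \ge 0$, the only danger is $\bar\phi_x > 0$. Compute
\[
\phi_x = \phi\left(\frac1y - 1 - \frac{y}{2t}\right),
\]
which is positive only for $y$ less than roughly $1$. So outside a region where $y \lesssim 1$ there is nothing to prove. In the bad region I would use $A(v) \ge c v^2$ and $A'(v) \le C v$ for small $v$, both of which follow from $A(0) = A'(0) = 0$, $A \in C^2$, and $A(u)/u$ increasing with $A(1) = 1$. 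This reduces the needed inequality to
\[
\tfrac14 t^{-5/4}\phi + c\phi^2 - C\chi\,\phi\,\phi\left(\frac1y - 1\right)_+ \ge 0 .
\]
Dividing by $\phi$, it suffices that $\tfrac14 t^{-5/4} + c\phi \ge C\phi/y$ on $y \le 1$.

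\textbf{Step 3: the obstacle.} The main obstacle is the $\phi/y$ term near the boundary $y = 0$. There $\phi/y \approx K t^{-3/2}$, while $\phi$ itself vanishes. The saving is the time decay: $\phi/y \le K t^{-3/2}$, and this is much smaller than $\tfrac14 t^{-5/4}$ once $t > T$ with $T$ large depending on $K$, $C$ and $\chi$. This is precisely why the factor $(1 - t^{-1/4})$ was introduced, and I expect it to close the estimate without using the term $c\phi^2$. Two further checks are needed:
\begin{itemize}
\item The bound $\sup \phi \le K t^{-3/2}$ must be small enough for the small-$v$ bounds on $A$ to apply. This again holds for large $T$.
\item The borderline region $y \approx 1$, where $\phi_x$ changes sign, needs care. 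There $(1/y - 1)_+$ is bounded, so the same $t^{-3/2}$ versus $t^{-5/4}$ comparison settles it.
\end{itemize}
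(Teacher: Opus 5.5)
Your Step 1 matches the paper exactly. The mechanism you identify is also the paper's: the factor $1-t^{-1/4}$ produces $\tfrac14 t^{-5/4}\phi$, which must dominate an advection error of size $t^{-3/2}\phi$. But Step 2 gets the sign of the advection term backwards. Under \eqref{A_assumptions} you have $A'\ge 0$ on $[0,1]$: write $A(u)=u\alpha(u)$ with $\alpha=A/u$ increasing and $\alpha(0^+)=A'(0)=0$, so $A'=\alpha+u\alpha'\ge 0$. Hence $\chi A'(\bar\phi)\bar\phi_x\ge 0$ wherever $\bar\phi_x\ge 0$. That is exactly the region near $y=0$ that you treat as the obstacle, and nothing needs to be proved there. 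The term is negative precisely where $\phi_x<0$, i.e.\ on the unbounded set where $1/y-1-y/(2t)<0$ (roughly $y\gtrsim 1$). This is the set you dismiss with ``outside a region where $y\lesssim 1$ there is nothing to prove.'' So your argument checks the supersolution inequality only where it holds trivially and omits the only region where it requires work. The bound $\phi/y\le Kt^{-3/2}$ you rely on is not the relevant quantity.

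The repair uses the same $t^{-5/4}$ versus $t^{-3/2}$ comparison, applied to $\sup_{y\ge 0}|\phi_x|$ instead. Since $A'(0)=0$ and $0\le\bar\phi\le\phi$ for $t\ge 1$, you have $A'(\bar\phi)\le\|A''\|_{L^\infty}\phi$ and $|\bar\phi_x|\le|\phi_x|$. Hence $\chi A'(\bar\phi)\bar\phi_x\ge -C\phi\,(\phi_x)_-$, where $(\phi_x)_-$ is the negative part. The explicit formula is $\phi_x=Kt^{-3/2}e^{-y}e^{-y^2/4t}\bigl(1-y-\tfrac{y^2}{2t}\bigr)$. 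With the elementary bounds $|1-y|e^{-y}\le 1$ and $y^2e^{-y}\le 1$, this gives $|\phi_x|\le 2Kt^{-3/2}$ for all $y\ge 0$ and $t\ge 1$. Therefore
\[
\mathcal{N}[\bar\phi]\ge \phi\Bigl(\tfrac14 t^{-5/4}-CKt^{-3/2}\Bigr)\ge 0\qquad\text{for } t>T(K,\chi,A),
\]
which is precisely the paper's proof. Separately, your claim that $A(v)\ge cv^2$ follows from \eqref{A_assumptions} is false; for example, $A(u)=u^3$ satisfies every hypothesis. You never actually use the $c\phi^2$ term, so this does not matter once the sign is corrected.
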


    \begin{proof}
        Consider the linearized equation
        \[\phi_t = \phi_{xx} + \phi.\] 
        Setting $\tilde \phi(t, x) = \phi(t, x+2t)$ and $\hat \phi(t, x) = e^x \tilde \phi(t, x)$ we have
        \begin{equation}
            \hat \phi_t = \hat \phi_{xx}. \label{linearized_pde}
        \end{equation}
        Then,
        \[\hat \phi(t, x) = \frac{Kx}{t^{3/2}}e^{\frac{-x^2}{4t}}\]
        solves \eqref{linearized_pde} with the Dirichlet boundary condition for any $K$.
        Set 
        \[\bar \phi(t, x) = \left(1- \frac{1}{t^{1/4}}\right)\phi(t, x) = C\left(1- \frac{1}{t^{1/4}}\right) \frac{(x-2t)}{t^{3/2}}e^{-(x-2t)}e^{\frac{-(x-2t)^2}{4t}}.\]
        A quick calculation shows
        \begin{align*}
            \bar \phi_t - \bar \phi_{xx} - \bar \phi + A(\bar \phi) + \chi A'(\bar \phi) \bar \phi_x &= \frac{1}{4t^{5/4}}\phi(t, x) + \left(1-\frac{1}{t^{1/4}}\right)(\phi_t - \phi_{xx} - \phi) + A(\bar \phi) + \chi A'(\bar \phi)\bar\phi_x\\
            &=\frac{1}{4t^{5/4}}\phi(t, x) + A(\bar \phi) + \chi A'(\bar \phi)\bar\phi_x\\
            &\geq \phi(t, x)\left(\frac{1}{4t^{5/4}} + C\phi_x(t, x)\right).
        \end{align*}
        Taking a derivative of $\phi$, we obtain
        \[\phi_x(t, x) = \frac{K e^{-(x-2t)}e^{-\frac{(x-2t)^2}{4t}}}{t^{3/2}}\left(1 - (x-2t) - \frac{(x-2t)^2}{2t}\right) \geq \frac{-C}{t^{3/2}}.\]
        Thus, for $t > T$ sufficiently large,
        \[\frac{1}{4t^{5/4}} + C\phi_x(x) \geq 0,\]
        which implies that $\bar \phi$ is a nonnegative supersolution for $x \geq 2t$.
    \end{proof}

    Before finding a supersolution for $x \leq 2t$, we first
    construct a modified supersolution from \eqref{supersolution} in order to facilitate matching
    with the supersolution on the left half-line.

    Define $\bar u_2$ as in \cite[Lemma 4.8]{giletti2022monostablepulledfrontslogarithmic},
    which was shown to be a nonnegative supersolution to \eqref{main_eq} for $x \geq 2t$ when $\chi = 0$.
    We wish to use $\bar u_2$ as a supersolution when $\chi > 0$, in which case we need an estimate on
    $\partial_x\bar u_2$. Parabolic estimates imply that for $t_0 > T > 1$ and $x_0 > 2t_0$,
    \begin{equation}\label{parabolic_estimate}
        \sup_{(t,x) \in Q_{1/2}(t_0, x_0)} |\partial_x \bar u_2(t, x)| \leq C\sup_{(t,x) \in Q_{1}(t_0, x_0)} |\bar u_2(t, x)|,
    \end{equation}
    where
    \[Q_r(t_0, x_0) = (t_0 - r^2, t_0) \times (x_0 - r, x_0 + r).\]
    We use the following estimates on $\bar u_2$. For
    $\epsilon \in (0, 1/4)$ and $x \in \left[2t - \frac{3+2\epsilon}{2} \log t, 2t + \sqrt t\right]$, we have
    \begin{equation}\label{compact_estimate}
        \bar u_2(t, x) \leq \frac{C\left(x-2t + \frac{3+2\epsilon}{2}\log t\right)}{t^{3/2}}e^{-(x-2t)}e^{-\frac{(x-2t + \frac{3+2\epsilon}{2}\log t)^2}{4t}} + \frac{C\left(x-2t + \frac{3+2\epsilon}{2}\log t\right)}{t^{5/2}}e^{-(x-2t)}.
    \end{equation}
    Moreover, for $x \in [2t + \sqrt t, \infty)$,
    \begin{equation}\label{large_x_estimate}
        \bar u_2(t, x) \leq C e^{-(x-2t)}.
    \end{equation}
    Applying \eqref{parabolic_estimate} to \eqref{compact_estimate} gives
    \[|\partial_x \bar u_2(t, x)| \leq \frac{C \log}{t^{3/2}} \qquad \text{ for } x \in [2t, 2t + \sqrt t].\]
    Similarly, \eqref{large_x_estimate} implies
    \[|\partial_x \bar u_2(t, x)| \leq Ce^{-(x-2t)} \leq Ce^{-\sqrt t} \qquad \text{ for }  x \in [2t + \sqrt t, \infty).\]
    In particular, for all $x \geq 2t$, one finds
    \[|\partial_x \bar u_2(t, x)| \leq \frac{C \log t}{t^{3/2}}.\]
    This bound is enough to control the advective term in \eqref{main_eq} and shows that
    $\bar u_2$ remains a supersolution for $\chi > 0$.

    We next match $\bar u_2$ with $\bar \phi$. At $x = 2t + \frac{3}{2}\log t$, we have
    \[\bar \phi\left(t, 2t + \frac{3}{2}\log t\right) \geq \frac{K\log t}{t^{3}},\]
    while
    \[\bar u_2\left(t, 2t + \frac{3}{2}\log t\right) \leq \frac{C\log t}{t^3} + o\left(\frac{\log t}{t^3}\right).\]
    Choosing $K$ large enough ensures that
    \[\bar u_2\left(t, 2t + \frac{3}{2}\log t\right) < \bar \phi\left(t, 2t + \frac{3}{2}\log t\right).\]
    Thus, for all $x \geq 2t$, both $\bar \phi$ and $\bar u_2$ are nonnegative supersolutions,
    so
    \begin{equation}\label{modified_super_soln}
        \bar u_3(t, x) = \begin{cases}
        \bar u_2(t, x) & x \in \left[2t, 2t + \frac{3}{2}\log t\right]\\
        \min \{\bar u_2(t, x), \bar \phi(t, x)\} & x\in \left[2t+\frac{3}{2}\log t, \infty\right)
    \end{cases}
    \end{equation}
    is also a nonnegative supersolution on $[2t, \infty)$.

    We now have a suitable supersolution for $x \geq 2t$, however, we need to connect $\bar u_3$ with a different supersolution
    for $x \leq 2t$. We construct the supersolution in the same way as
    Giletti in \cite[Lemma 4.7]{giletti2022monostablepulledfrontslogarithmic}. This result was originally proven in the absence of the
    advective term in \eqref{main_eq}. For completeness, we provide a proof of the result
    when the advective term is included.
    \subsection{Left half supersolution}
    The approach is to build this supersolution from the minimal speed traveling wave $U_*$. First we define
    some notation. Let $\delta \in (0, \frac{1}{2})$ and
    define $Z$ so that 
    \begin{equation}
        U_*(-Z) = 1-2\delta. \label{Z_def}
    \end{equation}
    We can choose $\delta$ small enough to assume that $Z > 0$ for $z \leq -Z$.
    Next, define $Z_\delta(t)$ such that
    \begin{equation}
        U_*(-Z_\delta(t)) = 1- \delta - \frac{\gamma}{t}. \label{Z_delta}
    \end{equation}
    Note that $(Z_\delta(t))_{t \geq 0}$ is a bounded, increasing sequence because $U_*$ is monotonically decreasing. Therefore,
    there exists a $Z_\delta^\infty > 0$ which is the limit of this sequence. In addition, since $U_*$ is continuous,
    $U_*(-Z_\delta^\infty) = 1-\delta$. Moreover, from \eqref{Z_delta}, it follows that
    \[Z_\delta'(t) = O\left(\frac{\gamma}{t^2}\right),\]
    which then implies that
    \begin{equation}\label{Z_closeness}
        Z_\delta(t) - Z_\delta^\infty = O\left(\frac{1}{t}\right).
    \end{equation}

    \begin{lemma}[Giletti] \label{left_half_supersolution_lemma}
        There is a positive supersolution $\bar u_1$ of \eqref{main_eq} for $t \geq T$ and $x < 2t - \frac{3}{2}\log t - Z_\delta(t) + Z_0(t)$.
        Moreover, $Z_0(t)$, defined below in \eqref{z_0_def}, satisfies for any $t \geq T$,
        \begin{equation}\label{supersolution_zero_pt}
            Z_0(t) > \frac{7}{2}\log t.
        \end{equation}
        In addition, for $x \leq 2t - \frac{3}{2}\log t - Z_\delta(t)$,
        \begin{equation}\label{left_left_supersolution}
            \bar u_1(t, x) = U_*\left(x-2t + \frac{3}{2}\log t\right) + \frac{\gamma}{t}
        \end{equation}
        while, for $x \geq 2t - \frac{3}{2}\log t - Z_\delta(t)$, 
        \begin{equation}\label{tw_closeness}
            \left|\bar u_1(t, x) - U_*\left(x - 2t + \frac{3}{2}\log t + Z_\delta(t) - Z_\delta^\infty\right)\right| \leq \left(\frac{3}{t}e^{-\left(x-2t+\sfrac{3}{2}\log t + Z_\delta(t)\right)}\right)^{1-\eta},
        \end{equation}
        with $\eta > 0$ arbitrarily small.
        Lastly, there exists $L_1 > 0$ such that
        \begin{equation}
            \bar u_1(T, x) \geq 1 \qquad \text{ for all } x \leq - L_1. \label{convenient_lowerbound}
        \end{equation}
    \end{lemma}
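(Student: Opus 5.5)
The plan is to follow Giletti's construction in \cite[Lemma 4.7]{giletti2022monostablepulledfrontslogarithmic} and check that the extra advective term $\chi(A(u))_x$ has a favorable sign, or is small enough to absorb.

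\textbf{Step 1: the far-left piece.} For $x \le 2t - \frac32\log t - Z_\delta(t)$, define $\bar u_1$ by \eqref{left_left_supersolution}. Write $y = x - 2t + \frac32\log t$ and $\bar u_1 = U_*(y) + \gamma/t$. Using the traveling wave equation \eqref{intro_TW_eqn}, the supersolution operator $\mathcal N[\bar u] := \bar u_t + \chi A'(\bar u)\bar u_x - \bar u_{xx} - \bar u + A(\bar u)$ becomes
\[
\mathcal N[\bar u_1] = \frac{3}{2t}U_*' - \frac{\gamma}{t^2} + \chi\bigl(A'(U_*+\tfrac{\gamma}{t}) - A'(U_*)\bigr)U_*' + \Bigl(A(U_*+\tfrac{\gamma}{t}) - A(U_*) - \tfrac{\gamma}{t}\Bigr).
\]
On this region $U_* \ge 1-\delta - \gamma/t$, so $A'(U_*) \ge A'(1-2\delta) > 1$ for $\delta$ small. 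Hence the last bracket is at least $c\gamma/t$.
\begin{itemize}
\item The advective difference and the $\frac{3}{2t}U_*'$ term are bounded by $C(|U_*'|/t)(1+\gamma/t)$.
\item Since $|U_*'| \le C(1-U_*)$ there, we have $|U_*'| \le C\delta$.
\end{itemize}
Choosing $\delta$ small relative to $c\gamma$, and $T$ large, gives $\mathcal N[\bar u_1] \ge 0$.

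\textbf{Step 2: the transition piece.} For $x \ge 2t - \frac32\log t - Z_\delta(t)$, use Giletti's ansatz. Let $z = y + Z_\delta(t)$ and take
\[
\bar u_1 = U_*\bigl(y + Z_\delta(t) - Z_\delta^\infty\bigr) + \frac{\gamma}{t}\,\psi(z),
\]
where $\psi(z) = e^{-(1-\eta)z}$, truncated or combined with a small-amplitude correction. The correction vanishes at the point $z = Z_0(t)$, which is what defines $Z_0$ in \eqref{z_0_def}. The matching at $z=0$ is continuous with Step 1 by \eqref{Z_delta}, and it produces a convex kink, which is allowed for supersolutions.

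\textbf{Step 3: checking the supersolution inequality in the transition region.} In the linearization, the operator acting on $\frac{\gamma}{t}e^{-(1-\eta)z}$ gives
\[
\frac{\gamma}{t}e^{-(1-\eta)z}\bigl[-(1-\eta)^2 + 2(1-\eta) - 1 + O(A'(U_*))\bigr] = \frac{\gamma}{t}e^{-(1-\eta)z}\bigl[\eta^2 + O(A'(U_*))\bigr].
\]
The bracket is strictly positive ahead of the front, because $A'(U_*)$ is small there. Behind the front one uses $A'>1$ near $1$ as in Step 1.

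The advective contribution is
\[
\chi\bigl(A'(\bar u_1)\bar u_{1,x} - A'(U_*)U_*'\bigr) + \chi A'(U_*)\,\partial_x\Bigl(\frac{\gamma}{t}\psi\Bigr).
\]
Its first part is $O\bigl(|U_*'|\,\frac{\gamma}{t}\psi\bigr)$, which is small ahead of the front because $U_*' \to 0$ and $A'(U_*) \le CU_*$. The term $\chi A'(U_*)\frac{\gamma}{t}\psi'$ carries a factor $A'(U_*) \le CU_* \ll \eta^2$ for $z$ large. In the compact zone where $z = O(1)$, it is absorbed by choosing the constant $Z$ from \eqref{Z_def} and $\delta$ so that $A'(U_*) > 1$ there. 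The time-derivative terms $\frac{3}{2t}U_*'$, $Z_\delta'U_*'$ and $-\frac{\gamma}{t^2}\psi$ are handled using \eqref{Z_closeness} and the decay $|U_*'| \le CU_*$.

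The range $z < Z_0(t)$ with $Z_0 > \frac72\log t$ comes from requiring $\frac{\gamma}{t}e^{-(1-\eta)z} \ge$ the error terms of size $\frac{1}{t}U_*$. The exponent gap $\eta$ lets this persist until $e^{-\eta z} \sim t^{-1}$ scales, which gives $Z_0 \gtrsim \eta^{-1}\log t$. Since $\eta$ is small, this exceeds $\frac72\log t$.

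\textbf{Step 4: remaining properties.} The estimate \eqref{tw_closeness} is immediate from the ansatz, using $(\gamma/t)\psi \le (3e^{-z}/t)^{1-\eta}$ for $\gamma \le 3^{1-\eta}$. The bound \eqref{convenient_lowerbound} holds because at $t=T$ we have $\bar u_1 \ge U_*(y) + \gamma/T$, and $U_*(y) \ge 1-\gamma/T$ for $y \le -L_1$.

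The main obstacle I expect is the transition zone $z = O(1)$. There $A'(U_*)$ is neither small nor larger than $1$, so the sign of $\eta^2 - A'(U_*)(\ldots)$ together with the new advective term $\chi A'\psi'$ must be controlled jointly. I would first try to replace $\psi$ by a $\tau$-shifted version $\psi = e^{-(1-\eta)z}\bigl(1+\kappa\,\mathbb 1_{z\le Z}\bigr)$ and choose $\kappa$ large. The idea is that the reaction gain $\bigl(A'(U_*)-1\bigr)\psi$ in the region $U_* \ge 1-2\delta$ then dominates the $O(1)$ contributions coming from $z \in [0, Z]$.
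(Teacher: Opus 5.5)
Your Step 1 and your argument for \eqref{convenient_lowerbound} are essentially the paper's. The paper fixes $\delta$ and takes $\gamma$ large, while you take $\delta$ small; either works. Note that the advective difference $\chi A''(\xi)\frac{\gamma}{t}U_*'$ is of order $\frac{\gamma}{t}|U_*'|$, not $\frac{\gamma}{t^2}|U_*'|$, but it is still absorbed.

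The gap is in Steps 2--3: $U_*+\frac{\gamma}{t}e^{-(1-\eta)z}$ is not a supersolution ahead of the front. Your bracket has a sign error, since $-(1-\eta)^2+2(1-\eta)-1=-\eta^2$. This is structural, not a slip to patch. In the speed-$2$ frame, the linearized operator $-\partial_z^2-2\partial_z-1$ sends $e^{-\lambda z}$ to $-(\lambda-1)^2e^{-\lambda z}\le 0$ for every $\lambda$. The extra terms $-\frac{\gamma}{t^2}\psi$ and $\frac{3}{2t}\cdot\frac{\gamma}{t}\psi'$ are also negative. Meanwhile $U_*(x-2t+\frac32\log t)$ is itself a strict subsolution, with residual $\frac{3}{2t}U_*'<0$. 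This is the same $\frac32 U_*'$ you met in Step 1, and there it was rescued only by $A'(U_*)>1$.

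Ahead of the front $A'(U_*)\le CU_*\to 0$. So the residual is roughly $-\frac{3}{2t}|U_*'|-\frac{\gamma\eta^2}{t}e^{-(1-\eta)z}<0$ for all large $z$. Your modification by a factor $1+\kappa$ on $\{z\le Z\}$ acts only on a compact set and cannot fix this. More generally, for a correction $\frac{\gamma}{t}e^{-z}\phi(z)$ the leading-order residual is $\frac1t\bigl(\frac32U_*'-\gamma e^{-z}\phi''\bigr)$. Beating $\frac{3}{2t}|U_*'|\approx\frac{3\theta}{2t}ze^{-z}$ therefore requires $-\phi''\gtrsim z$, which forces $\bar u_1$ to vanish at some finite $z$. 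A supersolution of this type cannot stay positive on the whole half-line; this is why the statement is only claimed up to a zero $Z_0(t)$.

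Your gluing is also wrong. At $z=0$ your right piece equals $U_*(-Z_\delta^\infty)+\frac{\gamma}{t}=1-\delta+\frac{\gamma}{t}$, while the left piece equals $1-\delta$. An upward jump, or a convex corner, is not admissible for a supersolution; only concave corners, such as those produced by a minimum, are.

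The missing idea is the one the paper takes from Giletti. For $z\ge 0$, set $\bar u_1=U_{3/2}(z;t)$, the solution of the traveling-wave ODE with the \emph{subcritical} speed $2-\frac3t$. It is $C^1$-matched to the left piece through $U_{3/2}(0;t)=1-\delta$ and $U_{3/2}'(0;t)=U_*'(-Z_\delta(t))$. The frame moves at $2-\frac{3}{2t}$, which is faster than $2-\frac3t$, so the residual is $(Z_\delta'-\frac{3}{2t})U_{3/2}'+\partial_t U_{3/2}$. Its leading part is now positive wherever $U_{3/2}'<0$; the remaining terms are handled as in Giletti's Lemma 4.5. The price is that a subcritical profile must cross zero, and its first zero is exactly $Z_0(t)$ in \eqref{z_0_def}. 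Your strictly positive ansatz has no such zero, so your $Z_0$ is not the one in the statement.

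Both \eqref{tw_closeness} and $Z_0>\frac72\log t$ then come from the ODE perturbation estimate \Cref{approx_lemma} with $\epsilon=\frac3t$. That lemma gives $|U_*(z-Z_\delta^\infty)-U_{3/2}(z;t)|\le(\frac3t e^{-z})^{1-\eta}$, proved with the barrier $\epsilon^{1-\eta}e^{\eta z}$ for $W=e^z(U-U_\epsilon)$. Compare this $O(t^{-(1-\eta)})e^{-(1-\eta)z}$ error with the order-one profile $U_*(z)\sim\theta z e^{-z}$. This gives $U_{3/2}>0$ for $z\lesssim\frac{1-\eta}{\eta}\log t$, which is where the $\log t$ scale comes from. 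Your heuristic does not produce it, because $\frac{\gamma}{t}e^{-(1-\eta)z}\ge\frac1t U_*$ holds for all large $z$, independently of $t$.
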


    With \Cref{left_half_supersolution_lemma}, we now have enough to prove \Cref{upperbound_prop}.
    \begin{proof}[Proof of {\Cref{upperbound_prop}}]
        Following the matching argument after the proof of Lemma 4.8 in \cite{giletti2022monostablepulledfrontslogarithmic}
        shows that
        \begin{equation}\label{complete_supersolution}
            \bar u(t, x) = \begin{cases}
                \bar u_1(t, x) & x \in (-\infty, 2t + \log t]\\
                \min\{\bar u_1(t, x), \bar u_3(t, x)\} & x \in \left[2t + \log t, 2t + \frac{3}{2}\log t\right]\\
                \bar u_3(t, x) & x \in \left[2t + \frac{3}{2}\log t, \infty\right]
            \end{cases}
        \end{equation}
        is a nonnegative supersolution over the whole real line for $t \geq T$.



        To apply the comparison principle, we just need to show that $\bar u(0, \cdot)$ sits above
        $u_0$.
        Recall that the assumptions on the initial data $u_0$ in \eqref{initial_data_assumption}, give us an $L_0$ so that
        $u_0(x) = 0$ for all $x \geq L_0$. Now consider $\bar u(T, x-(L_0 + L_1))$. We have
        \[u_0(x) = 0 \leq \bar u(T, x-(L_0 + L_1)) \qquad \text{for } x \geq L_0.\] 
        In addition, for $x \leq L_0$,
        $x-(L_0 + L_1) \leq -L_1$. So by \eqref{convenient_lowerbound}, 
        \[u_0(x) \leq 1 \leq \bar u_1(T, x-(L_0 + L_1)).\]
        Now we can apply the comparison principle, so that for any $t \geq 0$, and $x \in \R$,
        \[u(t, x) \leq \bar u(t+T, x-(L_0 + L_1)).\]

        Lastly, we prove \eqref{pulled_upperbound}.
        Shifting to $x \to x + 2t - \frac{3}{2}\log t$, we have
        \[u\left(t, x + 2t - \frac{3}{2}\log t\right) \leq \bar u\left(t+T, x+ 2t - \frac{3}{2}\log t-(L_0 + L_1)\right).\]
        For $x \leq 2T + \frac{3}{2}\log t - \frac{3}{2}\log (t+T) - Z_\delta(t+T) + L_0 + L_1$, we
        have by \eqref{left_left_supersolution} that
        \begin{equation}\label{left_left_estimate}
            \begin{split}
                u\left(t, x + 2t - \frac{3}{2}\log t\right) &\leq \bar u_1\left(t+T, x +2t - \frac{3}{2}\log t - (L_0 + L_1)\right)\\
                &= U_*\left(x-2T + \frac{3}{2}\log (t+T) - \frac{3}{2}\log t -(L_0+L_1)\right) + \frac{\gamma}{t+T}\\
                &= U_*(x-C) + \frac{\gamma}{t+T} \leq CU_*(x) + \frac{\gamma}{t+T}.
            \end{split}
        \end{equation}
        Recalling \eqref{tw_closeness}, yields for 
        \[
            x \in \left[2T + \frac{3}{2}\log t - \frac{3}{2}\log (t+T) - Z_\delta(t+T) + L_0 + L_1, 2T + \frac{3}{2}\log t+\log(t+T)+ (L_0 + L_1)\right], 
        \]
        the estimate
        \begin{equation}\label{left_middle_estimate}
            \begin{split}
                u\left(t, x + 2t - \frac{3}{2}\log t\right) &\leq U_*(x-C) + \left(\frac{3}{(t+T)} e^{-(x-2T + \frac{3}{2}\log(t+T) - \frac{3}{2}\log t + Z_\delta(t+T))}\right)^{1-\eta}\\
                &\leq C U_*(x) +\frac{Ce^{-(1-\eta)x}}{(t+T)^{1-\eta}}.
            \end{split}
        \end{equation}
        Combining \eqref{left_left_estimate} and \eqref{left_middle_estimate}, shows that
        for all $x \leq 2T +L_0+L_1$,
        \[
            u\left(t, x + 2t - \frac{3}{2}\log t\right) \leq  C U_*(x) +\frac{C}{(t+T)^{1-\eta}},
        \]
        which proves the first case in \eqref{pulled_upperbound}.

        For the second case, let 
        \[
            x \in \left[2T + \frac{3}{2}\log t+ \log(t+T)+ (L_0 + L_1), 2T + \frac{3}{2}\log(t) + \frac{3}{2}\log(t+T) + (L_0 + L_1)\right].
        \]
        By \eqref{complete_supersolution}, $\bar u \leq \min\{\bar u_1, \bar u_3\} \leq \bar u_1$ on this interval.
        Therefore,
        \[u\left(t, x + 2t - \frac{3}{2}\log t\right) \leq CU_*(x)  +\frac{Ce^{-(1-\eta)x}}{(t+T)^{1-\eta}}.\]
        This completes the proof of \Cref{upperbound_prop}.

        Finally, for $x \geq 2T + \frac{3}{2}\log(t + T) + \frac{3}{2}\log(t) + (L_0 + L_1)$,
        \begin{equation}\label{right_estimate}
            \begin{split}
                u\left(t, x + 2t - \frac{3}{2}\log t\right) &\leq \bar \phi\left(t+T, x + 2t - \frac{3}{2}\log t - (L_0+L_1)\right)\\
                &\leq C\left(x- \frac{3}{2}\log t\right)e^{-x}e^{-\frac{(x- \sfrac{3}{2}\log t -2T - L_0-L_1)^2}{4(t+T)}}.
            \end{split}
        \end{equation}
        This proves the third case in \eqref{pulled_upperbound}.
    \end{proof}

    We prove \Cref{left_half_supersolution_lemma} with a sequence of lemmas.
    The first of which gives a supersolution for $x$ ``far enough'' behind the front.
    \begin{lemma}\label{left_left_supersolution_lemma}
        Let
        \begin{equation}
            \bar u_1(t, x) = U_*\left(x-2t + \frac{3}{2} \log t\right) + \frac{\gamma}{t}. \label{super_soln_left}
        \end{equation}
        Then $\bar u_1$ is a positive supersolution of \eqref{main_eq} for $t > \frac{\gamma}{\delta}$, $x \leq 2t - \frac{3}{2}\log t - Z_\delta(t)$, and $Z_\delta$ given by \eqref{Z_delta}.
    \end{lemma}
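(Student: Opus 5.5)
We will substitute $\bar u_1$ directly into the operator
\[
\mathcal{L}[\bar u_1] := \partial_t \bar u_1 + \chi\,(A(\bar u_1))_x - \partial_{xx}\bar u_1 - \bar u_1 + A(\bar u_1)
\]
and show that $\mathcal{L}[\bar u_1] \ge 0$ in the stated region. Write $z = x - 2t + \frac32\log t$, so that $\bar u_1 = U_*(z) + \gamma/t$. Since $\partial_t z = -2 + \frac{3}{2t}$, we get
\[
\partial_t \bar u_1 = \Bigl(-2+\tfrac{3}{2t}\Bigr)U_*'(z) - \tfrac{\gamma}{t^2}, \qquad \partial_x \bar u_1 = U_*'(z), \qquad \partial_{xx}\bar u_1 = U_*''(z).
\]

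Use the traveling wave ODE \eqref{intro_TW_eqn} with $c=2$, namely $U_*'' = -2U_*' + \chi A'(U_*)U_*' - U_* + A(U_*)$, to eliminate $U_*''$. This gives
\[
\mathcal{L}[\bar u_1] = \tfrac{3}{2t}U_*'(z) - \tfrac{\gamma}{t^2} + \chi\bigl(A'(U_*+\tfrac{\gamma}{t}) - A'(U_*)\bigr)U_*' - \tfrac{\gamma}{t} + A\bigl(U_*+\tfrac{\gamma}{t}\bigr) - A(U_*).
\]
The positive terms come from the reaction: by the mean value theorem,
\[
A\bigl(U_*+\tfrac{\gamma}{t}\bigr) - A(U_*) - \tfrac{\gamma}{t} = \tfrac{\gamma}{t}\bigl(A'(\xi)-1\bigr)
\]
for some $\xi$ between $U_*$ and $U_*+\gamma/t$.

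In the region $x \le 2t - \frac32\log t - Z_\delta(t)$ we have $z \le -Z_\delta(t)$. Hence, by monotonicity and \eqref{Z_delta}, $U_*(z) \ge 1-\delta-\gamma/t$, so $\bar u_1 \ge 1-\delta$ and $\xi \ge 1-\delta-\gamma/t \ge 1-2\delta$ when $t > \gamma/\delta$. Since $A'(1) > 1$ (it follows from $A(1)=1$ and $A(u)/u$ increasing and convex: $A'(1) = 1 + (A/u)'(1) > 1$), choosing $\delta$ small gives $A'(\xi) - 1 \ge c_0 > 0$. The reaction part is therefore at least $c_0\gamma/t$.

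A small technical point is that $\bar u_1$ may exceed $1$, where $A$ is only defined on $[0,1]$. We either extend $A$ as a $C^2$ function past $1$, or note that comparison only needs the supersolution where it stays relevant; capping at $1$ is harmless because $1$ is itself a solution.

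The bad terms are the following.
\begin{itemize}
\item $\frac{3}{2t}U_*'(z) \le 0$: this term is of size $\frac{C}{t}|U_*'(z)|$.
\item The advection difference: since $A \in C^2$, it is bounded by $C\chi\frac{\gamma}{t}|U_*'(z)|$.
\item $-\gamma/t^2$: this is absorbed into $c_0\gamma/(2t)$ for $t$ large.
\end{itemize}
The main obstacle is the first term. Its $1/t$ factor is not multiplied by $\gamma$, so it must be dominated by $c_0\gamma/t$. We handle it by taking $\gamma$ large, which is allowed because $\gamma$ is a free parameter, with $t>\gamma/\delta$ ensuring $\gamma/t<\delta$. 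Since $|U_*'|\le C$ uniformly, choosing $\gamma \ge 3C/c_0$ makes $\frac{3}{2t}|U_*'| \le \frac{c_0\gamma}{4t}$. The advection term needs $C\chi\gamma|U_*'(z)|/t \le c_0\gamma/(4t)$, and this $\gamma$-scaling cannot be won by enlarging $\gamma$. Instead we use smallness of $|U_*'(z)|$ for $z \le -Z$: as $z\to-\infty$, $U_*'(z) \to 0$, so shrinking $\delta$ (which pushes $Z_\delta$ further left) makes $\sup_{z\le -Z_\delta}|U_*'|$ as small as needed. The order of choices is therefore: first $\delta$, small relative to $A''$ bounds and $c_0$; then $\gamma$ large; then $T > \gamma/\delta$ large. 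With these, $\mathcal{L}[\bar u_1] \ge \frac{c_0\gamma}{4t} > 0$. Positivity of $\bar u_1$ is immediate.
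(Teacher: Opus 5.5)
Your proposal is correct and follows the paper's proof essentially step for step. You substitute $\bar u_1$, eliminate $U_*''$ with the traveling-wave ODE, and apply the mean value theorem so that the coefficient of $\gamma/t$ becomes $A'(\cdot)-1$ plus a $\chi A''\,U_*'$ term. You make that coefficient uniformly positive by taking $\delta$ small, using $A'>1$ near $u=1$ and $|U_*'|$ small for $z\le -Z$, and then take $\gamma$ large to absorb $\frac{3}{2t}U_*'$.

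One minor point: to get $\frac{3}{2t}|U_*'|\le \frac{c_0\gamma}{4t}$ you need $\gamma\ge 6C/c_0$ rather than $3C/c_0$. Your remarks on why $A'(1)>1$ and on extending $A$ past $1$ fill in points the paper leaves implicit.
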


    \begin{proof}
        First note that
        \[\bar u_1\left(t, 2t - \frac{3}{2}\log t - Z_\delta(t)\right) = U_*(-Z_\delta(t)) + \frac{\gamma}{t} = 1-\delta.\]
        In addition, since $t > \frac{\gamma}{\delta}$, we also have
        \begin{equation}\label{z_delta_bound}
            U_*(-Z_\delta(t)) = 1- \delta - \frac{\gamma}{t} > 1-2\delta = U_*(-Z).
        \end{equation}

        Define the operator $L$ by
        \[Lv := \partial_t v - \partial_{xx} v - v + A(v) + \chi A'(v) \partial_x v.\]
        By the mean value theorem, there exists $\xi_1(t,x)$ and $\xi_2(t,x)$ such that
        \[A(\bar u_1) = A\left(U_* + \frac{\gamma}{t}\right) = A(U_*) + A'(\xi_1)\frac{\gamma}{t}\]
        and
        \[A'(\bar u_1) = A'\left(U_* + \frac{\gamma}{t}\right) = A'(U_*) + A''(\xi_2)\frac{\gamma}{t},\]
        where $U_* \leq \xi_1, \xi_2 \leq U_* + \frac{\gamma}{t}$.
        Hence,
        \begin{align*}
            L\bar u_1 &= -2U_*' + \frac{3}{2t}U_*' - \frac{\gamma}{t^2} - U_*'' - U_* - \frac{\gamma}{t} + A(U_*) + A'(\xi_1)\frac{\gamma}{t} +  \chi U_*'\left(A'(U_*) + A''(\xi_2)\frac{\gamma}{t}\right)\\
            &= \frac{1}{t}\left[\frac{3}{2}U_*' + \gamma\left(-\frac{1}{t} - 1 + A'(\xi_1) + \chi A''(\xi_2)U_*'\right)\right]\\
            &\geq \frac{1}{t}\left[\frac{3}{2}U_*' + \gamma\left(-\frac{1}{t} - 1 + A'(U_*) + \norm{A''}_{L^\infty} U_*'\right)\right].
        \end{align*}

        We note that $U_*'(x) \to 0$ as $x \to -\infty$.
        Also, the assumptions \eqref{A_assumptions} on $A$ imply that $A'(u) > 1$ for
        $u$ sufficiently large. As an illustrating example,
        consider $A(u) = u^2$. In this case, $A'(u) = 2u$, so
        $A'(u) > 1$ whenever $u > 1/2$.
        For a general $A$, by choosing
        $\delta$ small enough and noting \eqref{z_delta_bound}, we have 
        \[-\frac{1}{t} - 1 + A'\left(U_*\right)+ \norm{A''}_{L^\infty} U_*' > C > 0, \quad \text{ for all } x < 2t - \frac{3}{2}\log t - Z_\delta(t).\]
        In the above inequality, $U_*$ and $U_*'$ are evaluated at $x-2t + \frac{3}{2}\log t$.
        Then, taking $\gamma$ large enough, we find that $L\bar u_1 \geq 0$.
        This proves \eqref{left_left_supersolution}.

        We can also now prove \eqref{convenient_lowerbound}.
        Let $T > \gamma$ and $s = 1 - \frac{\gamma}{T} \in (0, 1)$. Then we can find $L_1 > 0$ so that
        \begin{equation}
            U_*\left(- L_1 -2T + \frac{3}{2}\log T\right) = s.
        \end{equation}
        Since $U_*$ is decreasing, this implies that
        \begin{equation}
            U_*\left(x - 2T + \frac{3}{2}\log T\right) > s \quad \text{ for all } x < -L_1.
        \end{equation}
        Therefore,
        \begin{equation}
            \bar u_1(T, x) = U_*\left(x - 2T + \frac{3}{2}\log T\right) + \frac{\gamma}{T} > s + \frac{\gamma}{T} = 1 \quad \text{ for all } x < -L_1.
        \end{equation}
    \end{proof}

    Next, we extend $\bar u_1$ on the right of $2t - \frac{3}{2}\log t - Z_\delta(t)$ by setting
    \begin{equation}
        \bar u_1(t, x) = U_{3/2}\left(x-2t + \frac{3}{2}\log t + Z_\delta(t); t\right) \label{super_soln_middle}
    \end{equation}
    for $x \geq 2t - \frac{3}{2}\log t - Z_\delta(t)$, and $t$ large enough. For each $t$, $U_r(\cdot; t)$ is the solution to the ODE
    \[U_{3/2}'' + \left(2-\frac{3}{t}\right)U_{3/2}' + U_{3/2}-A(U_{3/2}) - \chi A'(U_{3/2})U_{3/2}' = 0\]
    on $[0, +\infty)$, with the boundary conditions
    \[U_{3/2}(0; t) = 1-\delta, \quad U_{3/2}'(0; t) = U_*'(-Z_\delta(t)).\]
    Since $2-\frac{3}{t}$ is below the minimum speed 2, eventually $U_{3/2}$ will become negative as $x \to \infty$. However, 
    we show $U_{3/2}$ remains positive
    long enough so that it intersects with our supersolution \eqref{supersolution} on the right. Let
    \begin{equation}\label{z_0_def}
        Z_0(t) = \inf \{x \geq 0: U_{3/2}(x; t) = 0\}.
    \end{equation}
    This is the $Z_0(t)$ mentioned in \Cref{supersolution_lemma}.

    \begin{lemma}
        The function $\bar u_1$ defined by \eqref{super_soln_left} and \eqref{super_soln_middle} is a positive supersolution
        of \eqref{main_eq} for all $t$ large enough and $x < 2t - \frac{3}{2}\log t - Z_\delta(t) + Z_0(t)$.
    \end{lemma}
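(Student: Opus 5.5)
We need to show that $\bar u_1$ is a supersolution on the whole region $x < 2t - \frac{3}{2}\log t - Z_\delta(t) + Z_0(t)$. The plan splits into three parts: the region left of the junction point $X(t) := 2t - \frac32\log t - Z_\delta(t)$, the region between $X(t)$ and $X(t)+Z_0(t)$, and the junction itself.

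\textbf{Left region.} For $x \le X(t)$, \Cref{left_left_supersolution_lemma} already gives $L\bar u_1 \ge 0$, so nothing more is needed there.

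\textbf{Right region.} On $X(t) < x < X(t)+Z_0(t)$, set $z = x - X(t)$. Then $\bar u_1 = U_{3/2}(z;t)$, which is positive by the definition \eqref{z_0_def} of $Z_0$. Write $\partial_t$ of $\bar u_1$ as
\[
\partial_t U_{3/2} + \Big(-2 + \tfrac{3}{2t} + Z_\delta'(t)\Big)U_{3/2}',
\]
where $\partial_t U_{3/2}$ is the derivative in the parameter $t$. Using the ODE for $U_{3/2}$, the operator becomes
\[
L\bar u_1 = \partial_t U_{3/2} + \Big(\tfrac{3}{t} - \tfrac{3}{2t} + Z_\delta'(t)\Big)U_{3/2}' = \partial_t U_{3/2} + \Big(\tfrac{3}{2t} + O(t^{-2})\Big)U_{3/2}'.
\]
The sign of the right side is the whole issue, and I would handle it in the following order.

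\begin{enumerate}
\item Show $U_{3/2}' < 0$ on $[0, Z_0(t)]$. The plan is a phase-plane argument: the trajectory starts at $(1-\delta, U_*'(-Z_\delta))$ with negative slope and cannot turn around before hitting $0$. The reason is that at a critical point with $0 < U_{3/2} < 1-\delta$ we have $U_{3/2}'' = -(U_{3/2} - A(U_{3/2})) < 0$. The advection term $\chi A'(U)U'$ vanishes at critical points, so it does not interfere.
\item Control $\partial_t U_{3/2}$. I would differentiate the ODE in $t$. Then $V = \partial_t U_{3/2}$ solves a linear ODE with forcing $-\frac{3}{t^2}U_{3/2}'$ and data $V(0)=0$, $V'(0) = -U_*''(-Z_\delta)Z_\delta'(t) = O(t^{-2})$. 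The resulting bound should be $|V| \lesssim t^{-2}\times$(growth factor), which is dominated by $\frac{3}{2t}|U_{3/2}'|$ as long as $z$ is not too close to $Z_0$.
\item Close to $Z_0$, $|U_{3/2}'|$ stays bounded below by the phase-plane picture, since the trajectory crosses $U=0$ transversally. Combined with step 2, this gives $L\bar u_1 \ge 0$ there as well.
\end{enumerate}

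If the crude $O(t^{-2})$ control in step 2 is not uniform on intervals of length $\sim\log t$, I would instead follow Giletti's \cite[Lemma 4.7]{giletti2022monostablepulledfrontslogarithmic} approach of comparing $U_{3/2}(\cdot;t)$ directly with $U_*$. By the matching-at-$0$ data, the error is $O(1/t)$ multiplied by the linearized growth $e^{(1-\eta)z}$, which is exactly the content of \eqref{tw_closeness}. The advective term only adds a bounded drift $\chi A'(U)$ in the linearization, and this term is $O(U)$ and hence harmless ahead of the front.

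\textbf{Junction.} At $x = X(t)$ the two pieces agree in value ($1-\delta$) and in slope ($U_*'(-Z_\delta)$ from the left, by the boundary condition on the right). So $\bar u_1$ is $C^1$ there. A $C^1$ function that is piecewise a supersolution is a supersolution in the viscosity/weak sense, since there is no Dirac mass in $\partial_{xx}$. The positivity of $\bar u_1$ is immediate from the construction on both sides.

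\textbf{Main obstacle.} The hardest step is the sign of $\partial_t U_{3/2}$ relative to $\frac{3}{2t}U_{3/2}'$, uniformly over the long interval $[0, Z_0(t)]$. This matters because \eqref{supersolution_zero_pt} says $Z_0(t) > \frac72\log t$, so the interval grows with $t$. My first attempt would be to exploit monotonicity of the family of solutions of $U'' + cU' + f(U) - \chi A'(U)U' = 0$ in the speed $c$. Decreasing $c$ toward $2$ as $t$ grows should push the trajectory upward in the phase plane, which would give $\partial_t U_{3/2} \ge 0$ directly and make the sign automatic.
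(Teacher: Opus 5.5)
Your overall structure is fine: the left piece comes from the earlier lemma, the gluing at $X(t)$ is $C^1$, and $U_{3/2}'<0$ on $[0,Z_0(t)]$ follows from the critical-point argument. The treatment of the right region, however, has two real problems.

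\textbf{The drift term has the wrong sign.} Since $-U_{3/2}''-(U_{3/2}-A(U_{3/2}))+\chi A'(U_{3/2})U_{3/2}'=(2-\frac{3}{t})U_{3/2}'$, the correct identity is
\[
L\bar u_1=\partial_t U_{3/2}+\Big(Z_\delta'(t)-\frac{3}{2t}\Big)U_{3/2}'.
\]
So this term is \emph{favorable}. With your sign, the estimate in step 2 would give $L\bar u_1<0$, and $\partial_tU_{3/2}\ge0$ would not make anything automatic.

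\textbf{Steps 2--3 cannot close.} They try to absorb $\partial_tU_{3/2}$ into $\frac{3}{2t}|U_{3/2}'|$ by a size bound, and that fails.
\begin{itemize}
\item The interval is not of length $\sim\log t$. For speed $2-\frac{3}{t}$ the linearization at $0$ has roots $\approx -1\pm i\omega$ with $\omega\approx\sqrt{3/t}$. So for large $z$, $U_{3/2}\approx\kappa\,\omega^{-1}e^{-z}\sin(\omega z)$ for some $\kappa>0$, and $Z_0(t)\approx\pi\sqrt{t/3}$.
\item Differentiating through $\omega$ gives, with $s=\omega z$, $\partial_tU_{3/2}\approx c_1t^{-1/2}e^{-z}(\sin s-s\cos s)$, while $t^{-1}|U_{3/2}'|\approx c_2t^{-1/2}e^{-z}\sin s$.
\item These are comparable for $s\sim1$; this is your ratio $z^2/t$ becoming $O(1)$.
\item On a whole range $s\in(s_*,\pi)$, $\partial_tU_{3/2}$ is strictly larger. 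At $s=\pi$ it is larger by a factor $\sim\sqrt t$, because $|U_{3/2}'(Z_0)|\approx\kappa e^{-Z_0}$, which is not bounded below uniformly in $t$.
\end{itemize}
Near $Z_0$ the sign of $L\bar u_1$ \emph{is} the sign of $\partial_tU_{3/2}$, so no size estimate works. The fallback through \eqref{tw_closeness} does not help either: it controls $U_{3/2}-U_*$, not $\partial_tU_{3/2}$.

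\textbf{What is needed: $\partial_tU_{3/2}\ge0$ on $[0,Z_0(t))$.} This is your last idea, but as stated it is missing an ingredient.
\begin{itemize}
\item The speed $2-\frac{3}{t}$ \emph{increases} to $2$; it does not decrease.
\item The initial slope also moves with $t$: $U_{3/2}'(0;t)=U_*'(-Z_\delta(t))$, and $\frac{d}{dt}U_*'(-Z_\delta(t))=-U_*''(-Z_\delta(t))Z_\delta'(t)$ with $Z_\delta'>0$. This is nonnegative only if $U_*''\le0$ at $-Z_\delta(t)$.
\item That holds once $\delta$ is small. Indeed $U_*''=\eta(U_*)\eta'(U_*)<0$ near $U_*=1$ (because $\eta(1)=0$ and $\eta'(1)<0$), and $U_*(-Z_\delta(t))\in(1-2\delta,1-\delta)$.
\end{itemize}
With the starting point raised and $c$ increased, run a first-touching argument for $dP/dU=-c+\chi A'(U)-(U-A(U))/P$ on $\{P<0\}$, which your step 1 provides. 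It gives $P(\cdot;t_2)\ge P(\cdot;t_1)$ for $t_2>t_1$. Comparing the arrival times $\int_U^{1-\delta}dv/|P|$ then yields $U_{3/2}(z;t_2)\ge U_{3/2}(z;t_1)$.

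It follows that $L\bar u_1\ge(\frac{3}{2t}-Z_\delta'(t))|U_{3/2}'|\ge0$ for $t$ large, since $Z_\delta'=O(t^{-2})$. The paper gives no argument here beyond citing Giletti's Lemma~4.5. The route that closes is this sign-based monotonicity in $t$, including the initial-slope ingredient, which your proposal only gestures at.
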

    \begin{proof}
        The same proof as \cite[Lemma 4.5]{giletti2022monostablepulledfrontslogarithmic} gives the result.
    \end{proof}

    The last step is ensuring that $Z_0(t)$ is large enough so that our two supersolutions intersect. This is
    done through an approximation argument.

    \begin{lemma}\label{approx_lemma}
        Assume that $U$ and $U_\epsilon$ solve on the positive half line
        \begin{align*}
            U'' + 2U' + U-A(U) - \chi A'(U)U' = 0\\
            U''_\epsilon + (2-\epsilon)U'_\epsilon + U_\epsilon - A(U_\epsilon) - \chi A'(U_\epsilon)U_\epsilon' = 0,
        \end{align*}
        as well as 
        \[U(0) - U_\epsilon(0) = O(\epsilon), \quad U'(0) - U'_\epsilon(0) = O(\epsilon),\]
        as $\epsilon \to 0$. In addition assume
        \begin{equation}
            U, U' = O(ze^{-z}) \label{approx_assumption}
        \end{equation}
        as $z \to \infty$. Then for any $\eta > 0$, there exists $\epsilon_\eta$ such that for all $0 < \epsilon \leq \epsilon_\eta$ and
        $z \geq 0$,
        \begin{equation}
            |U(z)-U_\epsilon(z)| \leq (\epsilon e^{-z})^{1-\eta}. \label{approx_result}
        \end{equation}
    \end{lemma}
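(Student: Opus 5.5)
We will study the difference $D = U - U_\epsilon$ through the ODE it satisfies and prove the bound \eqref{approx_result} by a bootstrap on the interval where it holds. Subtracting the two equations gives
\[
D'' + 2D' + D - \big(A(U)-A(U_\epsilon)\big) - \chi\big(A'(U)U' - A'(U_\epsilon)U_\epsilon'\big) = -\epsilon U_\epsilon'.
\]
We will write the nonlinear differences as $a(z)D + b(z)D' $ with $a = A'(\xi_1) + \chi A''(\xi_2)U_\epsilon'$ and $b = \chi A'(U)$. By \eqref{approx_assumption} and $A'(0)=0$, as long as $U_\epsilon$ stays close to $U$ these coefficients are $O(ze^{-z})$, hence integrable on $[0,\infty)$.

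The core step treats the linear constant-coefficient operator $D''+2D'+D$, whose characteristic root $-1$ is double. We will use variation of parameters with fundamental solutions $e^{-z}$ and $ze^{-z}$:
\[
D(z) = (c_0 + c_1 z)e^{-z} + \int_0^z (z-s)e^{-(z-s)}F(s)\,ds,\qquad F = -\epsilon U_\epsilon' + aD + bD' ,
\]
with $c_0, c_1 = O(\epsilon)$ from the initial data. We set $\Phi(z) = \epsilon^{1-\eta} e^{-(1-\eta)z}$ and define $z^*$ as the first point where $|D|+|D'|$ exceeds $\frac12\Phi$. On $[0,z^*]$ we will estimate each piece separately. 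The homogeneous part satisfies $\epsilon(1+z)e^{-z} \le C\epsilon^{\eta}\Phi(z)$, since $\epsilon^\eta (1+z)e^{-\eta z}\le C\epsilon^{\eta}$. The forcing $\epsilon U_\epsilon'$ is $O(\epsilon ze^{-z})$ while $U_\epsilon$ tracks $U$, and its Duhamel integral is $O(\epsilon z^3e^{-z})$, which is likewise $\le C\epsilon^\eta\Phi$. The terms $aD$ and $bD'$ contribute
\[
\int_0^z (z-s)e^{-(z-s)} s e^{-s}\Phi(s)\,ds \le C\Phi(z)\int_0^z (z-s)e^{-\eta(z-s)} s e^{-\eta s}\,ds \cdot e^{-\eta\cdot 0}.
\]
This is where care is needed: the polynomial prefactor $z-s$ must be absorbed by the slack $e^{-\eta(z-s)}$ that the loss $\eta$ provides. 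Doing so leaves a small factor once we use the smallness of $a,b$ for large $s$ together with the $\epsilon$-smallness of $\Phi$ on bounded $s$. The derivative $D'$ is handled identically from the differentiated formula.

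Combining these bounds yields $|D|+|D'| \le (C\epsilon^\eta + \text{small})\Phi < \frac12\Phi$ on $[0,z^*]$ for $\epsilon\le\epsilon_\eta$, which contradicts the definition of $z^*$. Hence $z^*=\infty$, and this gives \eqref{approx_result}. The same bootstrap also justifies the replacement of $U_\epsilon'$ by $O(ze^{-z})$, because $|U_\epsilon' - U'|\le\Phi$ and $\Phi\le e^{-(1-\eta)z}$.

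The main obstacle is the linear-growth resonance from the double root $-1$. A naive Gronwall argument produces factors like $z^2e^{-z}$, which are not of the form $e^{-(1-\eta)z}$ uniformly in $\epsilon$. The $\eta$-loss is exactly what absorbs these polynomial factors, so the key constants have to be chosen with $C$ depending on $\eta$ before $\epsilon_\eta$ is fixed. A secondary subtlety is where $\Phi$ becomes larger than $U$ itself, around $z\gtrsim \eta^{-1}\log(1/\epsilon)$. There we will use that the nonlinear coefficients are bounded by $C(|U|+|U_\epsilon|)\le C(ze^{-z}+\Phi)$, which is still integrable against the kernel with a small total contribution.
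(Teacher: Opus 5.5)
The closing step of your bootstrap does not work as written; this is a genuine gap, although it can be repaired. The framework itself is a reasonable alternative to the paper's: the Duhamel formula for $D=U-U_\epsilon$ with the resonant kernel $(z-s)e^{-(z-s)}$, together with a continuity argument on $|D|+|D'|\le\frac12\Phi$.

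\textbf{Where it fails.} Since $\Phi(s)/\Phi(z)=e^{(1-\eta)(z-s)}$, the contribution of $aD+bD'$ to $|D(z)|/\Phi(z)$ is
\[
\int_0^z (z-s)e^{-\eta(z-s)}\big(|a(s)|+|b(s)|\big)\,\frac{|D(s)|+|D'(s)|}{\Phi(s)}\,ds .
\]
With only the bootstrap hypothesis, the main part $K(1+s)e^{-s}$ of $|a|+|b|$ already gives the bound $\frac{K}{2}\int_0^z (z-s)e^{-\eta(z-s)}(1+s)e^{-s}\,ds$. At $z\approx 1/\eta$ this is of order $K/\eta$.

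This is not small. Here $\eta$ is small, $K$ is fixed by $A$ and $U$, and nothing in the hypotheses makes $A'(U)$ small for bounded $s$, so $a$ and $b$ are of order one there. Your displayed inequality is even lossier: with $se^{-\eta s}$ in the integrand the integral equals $z^3e^{-\eta z}/6$, which is of order $\eta^{-3}$.

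Appealing to ``the $\epsilon$-smallness of $\Phi$ on bounded $s$'' does not help. The estimate is linear in the bootstrap bound, so the absolute size $\epsilon^{1-\eta}$ of $\Phi$ cancels against $\Phi(z)$. What you need on bounded $s$ is that $D$ itself be smaller than $\Phi$ by a factor, and the bootstrap hypothesis cannot supply that.

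\textbf{The fix.} The missing ingredient is a Gronwall-type argument in place of a smallness claim. There are two ways to do this.
\begin{enumerate}[(i)]
\item Use continuous dependence on a finite interval: $|D|+|D'|\le C_Z\epsilon\le C_Ze^{Z}\epsilon^{\eta}\Phi$ on $[0,Z]$, which is the paper's \eqref{initial_approx_ineq}. Choose $Z=Z_\eta$ so large that $K\eta^{-1}\int_{Z_\eta}^\infty(1+s)e^{-s}\,ds$ is small, and only then choose $\epsilon_\eta$.
\item Apply Gronwall directly to $\psi=(|D|+|D'|)/\Phi$. Use $\sup_{r\ge0}(1+r)e^{-\eta r}\le C/\eta$, and bound the forcing piece $\epsilon D'$ by the bootstrap hypothesis; it contributes $O(\epsilon\eta^{-2})$. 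On $[0,z^*]$ this gives
\[
\psi(z)\le C_\eta\epsilon^{\eta}+\frac{C}{\eta}\int_0^z\big((1+s)e^{-s}+\Phi(s)\big)\psi(s)\,ds,
\]
hence $\psi\le C_\eta\epsilon^{\eta}e^{C'/\eta}<\frac12$ once $\epsilon\le\epsilon_\eta$.
\end{enumerate}
With either repair your route is valid. A minor point: $|U_\epsilon'-U'|\le\Phi$ does not make $U_\epsilon'=O(ze^{-z})$, but the extra forcing $\epsilon\Phi$ contributes only $O(\epsilon\eta^{-2})\Phi$, so this is harmless.

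\textbf{Comparison with the paper.} The paper conjugates by $e^{z}$, so the principal part becomes $W''$ with no resonant prefactor. It gets the bounded-interval estimate from ODE stability. On $[Z_\eta,\infty)$ it runs a maximum-principle comparison with the barrier $\overline W=\epsilon^{1-\eta}e^{\eta z}$, where $\overline W''=\eta^2\overline W$ dominates the perturbation terms once $Cze^{-\eta z}\le\eta^2/4$.
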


    Note that \eqref{approx_assumption} is satisfied for the minimal speed traveling wave $U_*$.
    \begin{proof}
        By stability theory for ODE \cite{ODE_stability}, we have that for any $Z > 0$, there exists $C_Z > 0$ such that
        \begin{equation}
            |U(z) - U_\epsilon(z)| + |U'(z) - U_\epsilon'(z)| \leq C_Z \epsilon \label{initial_approx_ineq}
        \end{equation}
        for all small $\epsilon > 0$ and $z \in [0, Z]$. In particular, we recover \eqref{approx_result} on $[0, Z]$
        as long as
        \begin{equation}\label{epsilon_eta_def}
            \epsilon \leq \epsilon_\eta \leq \left(\frac{e^{-(1-\eta)Z}}{C_Z}\right)^{1/\eta}.
        \end{equation}
        
        We aim to extend \eqref{approx_result} over $[Z, +\infty)$ with a barrier argument.
        Let
        \begin{equation}\label{V_defs}
            V = e^z U \qquad \text{ and } \qquad V_\epsilon = e^z U_\epsilon.
        \end{equation}
        Then the differential equations which $V$ and $V_\epsilon$ solve are
        \begin{align*}
            V'' &= e^z A(U) + \chi A'(U)(V'-V).\\
            V_\epsilon'' &= \epsilon(V_\epsilon' - V_\epsilon) + e^z A(U_\epsilon) + \chi A'(U_\epsilon)(V_\epsilon' - V_\epsilon).
        \end{align*}
        Let 
        \begin{equation}\label{big_W_def}
            W = V-V_\epsilon
        \end{equation}
        Then $W$ solves
        \begin{equation}\label{W_eqn}
            W'' = \epsilon(V_\epsilon-V_\epsilon') + e^z(A(e^{-z}V) - A(e^{-z}V_\epsilon)) + \chi (A'(e^{-z}V)(V'-V) - A'(e^{-z}V_\epsilon)(V_\epsilon' - V_\epsilon)).
        \end{equation}
        Notice that we can write
        \[\epsilon(V_\epsilon-V_\epsilon') = \epsilon(V-V') + \epsilon(W' - W)\]
        so that \eqref{W_eqn} becomes
        \begin{equation}\label{W_eqn_2}
            W'' = \epsilon(V-V') + \epsilon(W' - W) + e^z(A(e^{-z}V) - A(e^{-z}V_\epsilon)) + \chi (A'(e^{-z}V)(V'-V) - A'(e^{-z}V_\epsilon)(V_\epsilon' - V_\epsilon)).
        \end{equation}

        For the nonlinear terms in \eqref{W_eqn_2}, we apply Taylor's theorem to get an expression involving $W$.
        First we consider
        \begin{equation}\label{first_taylor}
            \begin{split}
                A(e^{-z}V_\epsilon) - A(e^{-z}V) &= e^{-z}(V_\epsilon-V)A'(e^{-z}V) + \frac{e^{-2z}(V_\epsilon-V)^2}{2}A''(\xi_1)\\
                &= -e^{-z}W (e^{-z}V A''(\xi_2)) + \frac{e^{-2z}W^2}{2}A''(\xi_1)\\
                &= e^{-2z} VW A''(\xi_2) + \frac{e^{-2z}W^2}{2}A''(\xi_1),
            \end{split}
        \end{equation}
        where $\xi_1$ is between $e^{-z} V$ and $e^{-z}V_\epsilon$ and $\xi_2$ between $0$ and $e^{-z}V$.
        Then for the terms involving $A'$ in \eqref{W_eqn_2}, we group them as
        \[A'(e^{-z}V)(V'-V) - A'(e^{-z}V_\epsilon)(V_\epsilon' - V_\epsilon) = (-A'(e^{-z}V)V + A'(e^{-z}V_\epsilon)V_\epsilon) + (A'(e^{-z}V)V' - A'(e^{-z}V_\epsilon)V_\epsilon').\]
        Since $W = V-V_\epsilon$, we can write $V_\epsilon = V - W$.
        Thus,
        \begin{equation}\label{second_taylor}
            \begin{split}
                -A'(e^{-z}V)V + A'(e^{-z}V_\epsilon)V_\epsilon &=  -A'(e^{-z}V)V + A'(e^{-z}V_\epsilon)V - A'(e^{-z}V_\epsilon)W\\
            &= V(e^{-z}(V_\epsilon-V)A''(\xi_3)) - W(e^{-z}V_\epsilon A''(\xi_4))\\
            &= -e^{-z} VW A''(\xi_3) - e^{-z}W(V-W)A''(\xi_4)\\
            &= -e^{-z} VW A''(\xi_3) - e^{-z}VWA''(\xi_4)+e^{-z}W^2 A''(\xi_4).
            \end{split}
        \end{equation}
        In this case, $\xi_3$ is between $e^{-z}V$ and $e^{-z}V_\epsilon$ and $\xi_4$ is between $e^{-z}V_\epsilon$ and $0$.
        Similarly,
        \begin{equation}\label{third_taylor}
            \begin{split}
                A'(e^{-z}V)V' - A'(e^{-z}V_\epsilon)V_\epsilon' &= A'(e^{-z}V)V' - A'(e^{-z}V_\epsilon)(V'-W')\\
                &= V'(A'(e^{-z}v) -A'(e^{-z}V_\epsilon)) + A'(e^{-z}V_\epsilon)W'\\
                &= e^{-z}V'WA''(\xi_5) + e^{-z}W'VA''(\xi_6) - e^{-z}W'W A''(\xi_6),
            \end{split}
        \end{equation}
        where $\xi_5$ is between $e^{-z}V$ and $e^{-z}V_\epsilon$ and $\xi_6$ is between $e^{-z}V_\epsilon$ and $0$.
        Inserting \eqref{first_taylor}, \eqref{second_taylor}, and \eqref{third_taylor} into \eqref{W_eqn_2}, we can write
        \begin{align*}
            W'' = \epsilon(V-V') + &\epsilon(W'-W) + e^{-z} VW \psi_1 + e^{-z}W^2\psi_2 + \chi e^{-z} (V'W \psi_3 + W'V \psi_4 - W'W \psi_5)
        \end{align*}
        where 
        \begin{equation}\label{psi_bounds}
            |\psi_i| \leq 3\norm{A''}_{L^\infty}
        \end{equation}
        
        At this point, we have manipulated the ODE for $W$ into a useful form.
        We employ a contradiction argument to show $W$ remains close to $0$.
        Consider the barrier
        \begin{equation}\label{barrier}
            \overline W(z) = \epsilon^{1-\eta}e^{\eta z}.
        \end{equation}
        Also, define $Z_\eta > 1$ such that for all $z \geq Z_\eta$,
        \begin{equation}
            Cze^{-\eta z} \leq \frac{\eta^2}{4}, \label{approx_lemma_contradiction}
        \end{equation}
        with $C$ to be chosen below.

        By \eqref{initial_approx_ineq} and its subsequent discussion, we can choose $\epsilon_\eta$ so that for all $\epsilon \leq \epsilon_\eta$ and for all $0 \leq z \leq Z_\eta$,
        \begin{equation}\label{barrier_bound}
            |W(z)| + |W'(z)| \leq C_{Z_\eta}\epsilon e^z < \eta \epsilon^{1-\eta}e^{\eta z}.
        \end{equation}
        We aim to show that
        \[-\overline W(z) < W(z) < \overline W(z)\]
        for all $z \geq Z_\eta$. Assume that this were not true and, without loss of generality, that
        $W$ intersects $\overline W$ at the leftmost contact point which we denote by $z_1$ on $(Z_\eta, +\infty)$.
        Note that by \eqref{barrier} and \eqref{barrier_bound}, we have
        \[W'(Z_\eta) < \overline W'(Z_\eta)\]
        Therefore, $\overline W - W$ reaches a positive maximum at some $z_0 \in (Z_\eta, z_1)$, implying
        $\overline W''(z_0) \leq W''(z_0)$. Consequently,
        \begin{align*}
            \overline W''(z_0) \leq W''(z_0) = \epsilon(V&-V')(z_0) + \epsilon(W'-W)(z_0) + e^{-z_0} V(z_0)W(z_0) \psi_1(z_0) + e^{-z_0}W(z_0)^2\psi_2(z_0)\\
            &+ \chi e^{-z_0} (V'(z_0)W(z_0) \psi_3(z_0) + W'(z_0)V(z_0) \psi_4(z_0) - W'(z_0)W(z_0) \psi_5(z_0)).
        \end{align*}
        At $z_0$, we also have that
        \[\overline{W}'(z_0) = W'(z_0), \quad |W(z_0)| \leq |\overline W(z_0)|.\]
        Recalling \eqref{psi_bounds}, we obtain the estimate
        \begin{align*}
            \overline W''(z_0) \leq \epsilon(|V|+|V'|)(z_0) + &\epsilon(\overline W'+\overline W)(z_0) + 3e^{-z_0}\norm{A''}_{L^\infty}\left( |V(z_0)|\overline W(z_0) + \overline W(z_0)^2\right)\\
            &+ 3\chi e^{-z_0} \norm{A''}_{L^\infty}(|V'(z_0)| \overline W(z_0) + \overline W'(z_0)|V(z_0)| + \overline W'(z_0)\overline W(z_0)).
        \end{align*}
        Due to \eqref{approx_assumption}, we can find $\bar C > 1$ so that for all $z > 1$,
        \begin{equation}\label{V_upperbound}
            |V(z)|+|V'(z)| = |e^z U(z)| + |e^z(U(z) + U'(z))| \leq 2|e^z U(z)| + |e^z U'(z)| \leq \bar Cz.
        \end{equation}
        Now we use \eqref{V_upperbound} to get 
        \begin{align*}
            \overline W''(z_0) \leq \bar C\epsilon z_0 + &\epsilon(\overline W'+\overline W)(z_0) + 3\bar Ce^{-z_0}\norm{A''}_{L^\infty}\left(z_0\overline W(z_0) + \overline W(z_0)^2\right)\\
            &+ 3\bar C\chi e^{-z_0} \norm{A''}_{L^\infty}(z_0 \overline W(z_0) + z_0\overline W'(z_0) + \overline W'(z_0)\overline W(z_0)).
        \end{align*} 
        Since $\overline W(z) = \epsilon^{1-\eta}e^{\eta z}$, we find
        \begin{equation}\label{approx_lemma_intermediate}
            \begin{split}
                \eta^2 \epsilon^{1-\eta}e^{\eta z_0} \leq \bar C\epsilon z_0 + &\epsilon^{2-\eta}e^{\eta z_0}(1+\eta) + 3\bar Ce^{-z_0}\norm{A''}_{L^\infty}\left( z_0\epsilon^{1-\eta}e^{\eta z_0} + \epsilon^{2-2\eta}e^{2\eta z_0}\right)\\
                &+3\bar C\chi e^{-z_0} \norm{A''}_{L^\infty}(z_0 \epsilon^{1-\eta}e^{\eta z_0} + z_0 \eta \epsilon^{1-\eta}e^{\eta z_0} + \eta \epsilon^{2-2\eta}e^{2\eta z_0}).
            \end{split}
        \end{equation}
        Divide each side of \eqref{approx_lemma_intermediate} by $e^{\eta z_0}$ so that
        \begin{equation}\label{approx_lemma_intermediate_2}
            \begin{split}
                \eta^2 \epsilon^{1-\eta} \leq \bar C\epsilon z_0e^{-\eta z_0} + &\epsilon^{2-\eta}(1+\eta) + 3\bar Ce^{-z_0}\norm{A''}_{L^\infty}\left( z_0\epsilon^{1-\eta} + \epsilon^{2-2\eta}e^{\eta z_0}\right)\\
                &+3\bar C\chi e^{-z_0} \norm{A''}_{L^\infty}(z_0 \epsilon^{1-\eta} + z_0 \eta \epsilon^{1-\eta} + \eta \epsilon^{2-2\eta}e^{\eta z_0}).
            \end{split}
        \end{equation}
        Since both $\chi, \eta < 1$, we can further simplify \eqref{approx_lemma_intermediate_2} to
        \begin{equation}
            \begin{split}
                \eta^2 \epsilon^{1-\eta} &\leq \bar C\epsilon z_0e^{-\eta z_0} + \epsilon^{2-\eta}(1+\eta) + 3\bar Ce^{-z_0}\norm{A''}_{L^\infty}\left( 3z_0\epsilon^{1-\eta} + 2 \epsilon^{2-2\eta}e^{\eta z_0}\right)\\
                &\leq \bar C\epsilon z_0e^{-\eta z_0} + \epsilon^{2-\eta}(1+\eta) + 3\bar Ce^{-z_0}\norm{A''}_{L^\infty}\left( 3z_0\epsilon^{1-\eta}e^{\eta z_0} + 3z_0 \epsilon^{1-\eta}e^{\eta z_0}\right)\\
                &= \bar C\epsilon z_0e^{-\eta z_0} + \epsilon^{2-\eta}(1+\eta) + 18\bar C\norm{A''}_{L^\infty}z_0 \epsilon^{1-\eta} e^{z_0(\eta - 1)}.
            \end{split}
        \end{equation}

        Because $\eta \in (0, 1/2)$, this means that $\eta - 1 \leq - \eta$. Therefore, using \eqref{approx_lemma_contradiction}
        with $C = 18\bar C\norm{A''}_{L^\infty}$,
        \[18\bar C\norm{A''}_{L^\infty}z_0 \epsilon^{1-\eta} e^{z_0(\eta - 1)} \leq 18\bar C\norm{A''}_{L^\infty} z_0 e^{-\eta z_0}\epsilon^{1-\eta} \leq \epsilon^{1-\eta} \frac{\eta^2}{4}.\]
        We finally have that
        \[\frac{3\eta^2 \epsilon^{1-\eta}}{4} \leq \epsilon \frac{\eta^2}{4} + \epsilon^{2-\eta}(1+\eta).\]
        The left hand side of the previous inequality has a smaller power of $\epsilon$ than
        each of the terms on the right; hence, by decreasing $\epsilon$, we arrive at a contradiction. We have shown that
        \[W(z) \leq \overline W(z) = \epsilon^{1-\eta}e^{\eta z}.\]
        A similar argument also shows that
        \[W(z) \geq -\overline W(z) = -\epsilon^{1-\eta}e^{\eta z}.\]
        Recalling the definition \eqref{big_W_def} for $W$ along with \eqref{V_defs},
        we conclude that
        \[|U(z) - U_\epsilon(z)| \leq \epsilon^{1-\eta} e^{(\eta - 1)z} = \left(\epsilon e^{-z}\right)^{1-\eta}. \]
        for all $z \geq 0$ and any $\eta > 0$.
    \end{proof}

    We are now equipped to complete the proof of \Cref{left_half_supersolution_lemma}.

    \begin{proof}[Proof of {\Cref{left_half_supersolution_lemma}}]
        By \Cref{approx_lemma} with $\epsilon = \frac{3}{t}$, $U = U_*(\cdot - Z_\delta^\infty)$, and $U_\epsilon = U_{3/2}(\cdot; t)$,
    we have that, for any $\eta > 0$ and $z \geq 0$,
    \begin{equation}
        |U_*(z-Z_\delta^\infty) - U_{3/2}(z; t)| \leq \left(\frac{3}{t}e^{-z}\right)^{1-\eta}. \label{pulled_approx_result}
    \end{equation}
    Recall that, for $x \geq 2t - \frac{3}{2}\log t - Z_\delta(t)$,
    \begin{equation}
        \bar u_1(t, x) = U_{3/2}\left(x - 2t + \frac{3}{2}\log t + Z_\delta(t); t\right). \label{approximate_tw}
    \end{equation}
    Therefore, setting $z = x-2t + \frac{3}{2}\log t + Z_\delta(t)$,
    \begin{equation}
        \begin{split}
            \left|\bar u_1(t, x) - U_*\left(x - 2t + \frac{3}{2}\log t + Z_\delta(t) - Z_\delta^\infty\right)\right| &= \left|U_{3/2}(z; t) - U_*(z- Z_\delta^\infty)\right|\\
            &\leq \left(\frac{3}{t}e^{-z}\right)^{1-\eta} = \left(\frac{3}{t}e^{-(x - 2t + \frac{3}{2}\log t + Z_\delta(t))}\right)^{1-\eta}.
        \end{split}
    \end{equation}
    This proves \eqref{tw_closeness} in \Cref{left_half_supersolution_lemma}.

    Lastly, we show that $U_{3/2}(z; t)$ remains positive for $z$ large enough. Since $U_*$ is always positive, we may then assume
    without loss of generality that $U_{3/2}(z; t) \leq U_*(z-Z_\delta^\infty)$. Therefore by \eqref{pulled_approx_result},
    \[U_*(z-Z_\delta^\infty) - \frac{3^{1-\eta}}{t^{1-\eta}}e^{-(1-\eta)z} \leq U_{3/2}(z; t).\]
    We note that the previous inequality implies for any $M > 0$, there exists a $T$ large enough so that
    \begin{equation}\label{constant_lowerbound}
        U_*(M-Z_\delta^\infty) > 0 \quad \text{ for } t > T.
    \end{equation}
    Using the pulled asymptotics \eqref{pulled_asympotics} of $U_*(z)$ as $z \to \infty$, we have
    \[U_{3/2}(z; t) \geq C (z-Z_{\delta}^\infty)e^{-(z-Z_{\delta}^\infty)} - \frac{3^{1-\eta}}{t^{1-\eta}}e^{-(1-\eta)z}.\]
    By \eqref{constant_lowerbound}, we have for all $t > T$, $U_{3/2}(z;t)$ is positive up to the $z > 0$ at which $C (z-Z_\delta^\infty) = 3^{1-\eta}$.
    Therefore, we consider when $C (z-Z_\delta^\infty) \geq 3^{1-\eta}$. In this case,
    \begin{align*}
        U_{3/2}(z; t) \geq 3^{1-\eta}e^{-z}\left(e^{z_\delta^\infty} - \frac{1}{t^{1-\eta}}e^{\eta z}\right) \geq 3^{1-\eta}e^{-z}\left(1 - \frac{1}{t^{1-\eta}}e^{\eta z}\right).
    \end{align*}
    Hence, for all $z \leq \frac{1-\eta}{\eta}\log t$, $U_{3/2}(z; t) \geq 0$. Since
    $\eta$ can be arbitrarily small, we can choose $\eta$ such
    that $U_{3/2}(z; t) \geq 0$ if $z \leq \frac{7}{2}\log t$. This proves \eqref{supersolution_zero_pt} and completes the proof of \Cref{left_half_supersolution_lemma}.
    \end{proof}

    \appendix

    \section{Asymptotics for the traveling wave profile function in the pulled case} \label{eta_asympotics}
    In \cite[Lemma 3.4]{an2023locationdeterminesconvergencerate}, the asymptotics for $\eta(u), \eta'(u),$ and $\eta''(u)\eta(u)$ are
    stated. We compute the asymptotics to one more order here. Specifically, we are interested in the asymptotics for
    $\eta(u)(\eta''(u) + \chi A''(u))$, which are needed for the weighted $L^1$-calculations in
    \Cref{pulled_adjoint_subsection}.

    Since $0 \leq \chi < 1$, the minimal speed
    traveling wave (up to the appropriate shift) has the asymptotics
    \begin{equation}
        U_*(x) = \theta x e^{-x} + O\left(e^{-(1+\epsilon)x}\right), \quad \text{as } x \to +\infty, \label{pulled_asymptotics}
    \end{equation}
    where $\epsilon, \theta  > 0$.

    \begin{lemma} \label{asympotics_lemma}
        Suppose $0 \leq \chi < 1$ so that $U_*$ has the asymptotics given by \eqref{pulled_asymptotics} as $x \to +\infty$. There exists
        $C > 0$, so that for all $u \in (0, 1/100)$,
        \begin{equation}
            \eta(u)(\eta''(u) + \chi A''(u)) \leq -\frac{1}{\log\left(u\right)^2} + \frac{2\log\log\left(\frac{\theta }{u}\right)}{\log\left(\frac{\theta }{u}\right)^3}  + \frac{C}{\log\left(\frac{1}{u}\right)^3}. \label{asympotics_lemma_result}
        \end{equation}
    \end{lemma}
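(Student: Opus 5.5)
The starting point is the identity \eqref{eta_reduction_identity},
\[
\eta(u)(\eta''(u) + \chi A''(u)) = -(1-\eta'(u))^2 + A'(u)(1-\chi \eta'(u)).
\]
This reduces the lemma to a sufficiently precise \emph{lower} bound on $1-\eta'(u)$ for small $u$. The term $A'(u)(1-\chi\eta'(u))$ is $O(u)$, because $A'(u)\le Cu$ and $\eta'$ is bounded near $0$. Since $u \ll (\log u^{-1})^{-3}$, this term is absorbed into $C/\log(1/u)^3$. So the task is to show
\[
1-\eta'(u) \geq \frac{1}{\log(\theta/u)} - \frac{\log\log(\theta/u)}{\log(\theta/u)^2} - \frac{C}{\log(1/u)^2},
\]
or at least to establish the corresponding bound on the square.

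To get $\eta'$, I would parametrize by the traveling wave. Set $u = U_*(x)$ for $x$ large. Then $\eta(U_*(x)) = -U_*'(x)$, and differentiating gives $\eta'(U_*(x)) = U_*''(x)/U_*'(x)$. From \eqref{pulled_asymptotics}, together with the standard fact that the expansion may be differentiated, since $U_*$ solves the linear-at-leading-order ODE \eqref{intro_TW_eqn}, we have
\[
U_*' = \theta(1-x)e^{-x} + O(e^{-(1+\epsilon)x}), \qquad U_*'' = \theta(x-2)e^{-x} + O(e^{-(1+\epsilon)x}).
\]
Then
\[
1-\eta'(U_*) = \frac{U_*'-U_*''}{U_*'} = \frac{\theta(3-2x)e^{-x}+\dots}{\theta(1-x)e^{-x}+\dots} \quad\text{up to sign bookkeeping},
\]
which should give $1-\eta' = \frac{1}{x-1} + O(e^{-\epsilon x})$. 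I would double-check the constant by instead working with $\eta'(U_*) = 1 - \frac{1}{x}+\dots$ to match \cite[Lemma 3.4]{an2023locationdeterminesconvergencerate}, that is, $1-\eta' = \frac1{x} + O(x^{-2})$. The key point is that the exact form is $\frac{1}{x-1}$, up to errors that are exponentially small in $x$, hence of order $u^{\epsilon'}$.

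Next I would invert $u = \theta x e^{-x}(1+O(e^{-\epsilon x}))$. This gives
\[
x = \log(\theta/u) + \log x + O(u^{\epsilon'}) = L + \log L + O\!\left(\frac{\log L}{L}\right), \qquad L := \log(\theta/u).
\]
Hence
\[
\frac{1}{x-1} = \frac{1}{L} - \frac{\log L}{L^2} + O\!\left(\frac{1}{L^2}\right).
\]
Squaring yields
\[
(1-\eta')^2 = \frac1{L^2} - \frac{2\log L}{L^3} + O(L^{-3}),
\]
and therefore
\[
-(1-\eta')^2 \le -\frac{1}{L^2} + \frac{2\log L}{L^3} + \frac{C}{L^3}.
\]
Finally I would replace $-1/L^2$ by $-1/\log(u)^2$. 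Since $L = \log(1/u)+\log\theta$, we have $1/L^2 - 1/\log(1/u)^2 = O(\log(1/u)^{-3})$. Likewise $L^{-3}$ is comparable to $\log(1/u)^{-3}$ for $u<1/100$, possibly after enlarging $C$. Values of $u$ not small enough for the asymptotics to apply, but still in $(0,1/100)$, lie in a compact range where both sides are bounded, so they are handled by enlarging $C$.

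I expect the main obstacle to be justifying the differentiated asymptotics for $U_*'$ and $U_*''$ with an $O(e^{-(1+\epsilon)x})$ error. The sign conventions matter here, because $1-\eta'$ appears to sit near $\frac1{x}$ and the order-$L^{-2}\log L$ correction must come out with exactly the sign and coefficient stated. I would handle the asymptotics by writing $V = e^x U_*$. Then $V'' = e^xA(U_*) + \chi A'(U_*)(V'-V)$, whose right side is $O(x^2e^{-x})$. Integrating twice from $+\infty$ gives $V = \theta x + \theta_0 + O(x^2 e^{-x})$, with matching bounds for $V'$ and $V''$. Recording the resulting $\theta_0$ shift only alters the $O(L^{-2})$ term in $1-\eta'$, so it is harmless.
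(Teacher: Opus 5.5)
This is essentially the paper's proof: you reduce via \eqref{eta_reduction_identity} to $-(1-\eta')^2+O(u)$, compute $\eta'$ along the wave from the expansions of $U_*'$ and $U_*''$, insert $x_u=L+\log L+O(\log L/L)$, and expand the square. Your exact form $1-\eta'=\frac{1}{x-1}+O(e^{-\epsilon x})$, your $V=e^xU_*$ justification of the differentiated asymptotics (which the paper uses without comment), and your $O(L^{-3})$ remainder are slightly more careful than the paper's $\eta'=1-\frac{1}{x_u}+O(x_u^{-2})$ and its displayed $O(\log L/L^4)$ remainder; that remainder understates the true $O(L^{-3})$ error, but the final $C/\log(1/u)^3$ absorbs it anyway.

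There is one sign slip to fix. Differentiating $\eta(U_*)=-U_*'$ gives $\eta'(U_*)=-U_*''/U_*'$, so $1-\eta'(U_*)=(U_*'+U_*'')/U_*'=\frac{-\theta e^{-x}+O(e^{-(1+\epsilon)x})}{\theta(1-x)e^{-x}+O(e^{-(1+\epsilon)x})}=\frac{1}{x-1}+O(e^{-\epsilon x})$, which is what you ultimately claim. The ratio you actually display, with numerator $\theta(3-2x)e^{-x}$, tends to $2$ rather than $0$.
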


    \begin{proof}
        By \eqref{eta_reduction_identity}, we can write
        \[\eta(u)(\eta''(u) + \chi A''(u)) = -(1-\eta'(u))^2 + A'(u)(1-\chi\eta'(u)).\]
        With the assumptions on $A(u)$, we have

        \begin{equation}
            \eta(u)(\eta''(u) + \chi A''(u)) = -(1-\eta'(u))^2 + O(u). \label{expand_term}
        \end{equation}
        Therefore, we reduce the problem to computing the asymptotics for $\eta'(u)$.

        Take $u \in (0, 1/100)$, and define $x_u$ so that $U_*(x_u) = u$. Since $U_*$ is
        strictly decreasing, $x_u$ is unique, and $x_u = U_*^{-1}(u)$. Then, as $u \to 0^+$,
        \begin{equation}
            x_u = \log\left(\frac{\theta }{u}\right) + \log\log\left(\frac{\theta }{u}\right) + O\left(\frac{\log\log\left(\frac{\theta }{u}\right)}{\log\left(\frac{\theta }{u}\right)}\right).
        \end{equation}
        Now we compute, using the definition of $\eta(u)$, that
        \begin{equation}\label{eta_deriv}
            \eta'(u) = \frac{d}{du} \eta(U_*(x_u)) = -\frac{d}{du} U_*'(x_u) = - U_*''(x_u) \frac{dx_u}{du}.
        \end{equation}
        Since $U_* \in C^2(\R)$, we have that
        \begin{equation}\label{inverse_deriv}
            \begin{split}
                \frac{dx_u}{du} &= \frac{d}{du}(U_*^{-1}(u)) = \frac{1}{U_*'(x_u)} = \frac{1}{-\theta x_u e^{-x_u} + \theta e^{-x_u} + O\left(e^{-(1+\epsilon)x_u}\right)}\\
            &= \frac{1}{-U_*(x_u)\left(1 - \frac{1}{x_u} + O\left(x_u^{-1} e^{-\epsilon x_u}\right)\right)} = \frac{-1}{u}\left(1+ \frac{1}{x_u} + O(x_u^{-2})\right).
            \end{split}
        \end{equation}
        In addition,
        \begin{equation}\label{second_deriv_asympotics}
            \begin{split}
                -U''(x_u) &= -\theta x e^{-x_u} + 2\theta e^{-x_u} + O\left(e^{(-1+\epsilon)}x_u\right) = -U_*(x_u)\left(1- \frac{2}{x_u} + O\left(e^{-\epsilon x_u}\right)\right)\\
            &= -u\left(1- \frac{2}{x_u} + O(e^{-\epsilon x_u})\right).
            \end{split}
        \end{equation}
        Substituting \eqref{inverse_deriv} and \eqref{second_deriv_asympotics} into \eqref{eta_deriv} yields
        \begin{align*}
            \eta'(u) &= \left(1- \frac{2}{x_u} + O(e^{-\epsilon x_u})\right)\left(1+ \frac{1}{x_u} + O(x_u^{-2})\right) = 1 - \frac{1}{x_u} + O(x_u^{-2})\\
            &= 1 - \frac{1}{\log\left(\frac{\theta }{u}\right) + \log\log\left(\frac{\theta }{u}\right) + O\left(\frac{\log\log\left(\frac{\theta }{u}\right)}{\log\left(\frac{\theta }{u}\right)}\right)} + O(x_u^{-2})\\
            &= 1 - \frac{1}{\log\left(\frac{\theta }{u}\right)} \frac{1}{1 + \frac{\log\log\left(\frac{\theta }{u}\right)}{\log\left(\frac{\theta }{u}\right)} + O\left(\frac{\log\log\left(\frac{\theta }{u}\right)}{\log\left(\frac{\theta }{u}\right)^2}\right)}+ O(x_u^{-2}) \\
            &= 1 - \frac{1}{\log\left(\frac{\theta }{u}\right)} \left(1 - \frac{\log\log\left(\frac{\theta }{u}\right)}{\log\left(\frac{\theta }{u}\right)} + O\left(\frac{\log\log\left(\frac{\theta }{u}\right)}{\log\left(\frac{\theta }{u}\right)^2} \right) \right).
        \end{align*}

        Inserting the previous expansion into \eqref{expand_term}, we find
        \begin{align*}
            \eta(u)(\eta''(u) + \chi A''(u)) &= -(1-\eta'(u))^2 + O(u)\\
            &= -\frac{1}{\log\left(\frac{\theta }{u}\right)^2} \left(1 - \frac{\log\log\left(\frac{\theta }{u}\right)}{\log\left(\frac{\theta }{u}\right)} + O\left(\frac{\log\log\left(\frac{\theta }{u}\right)}{\log\left(\frac{\theta }{u}\right)^2} \right) \right)^2 + O(u)\\
            &= -\frac{1}{\log\left(\frac{\theta }{u}\right)^2} + \frac{2\log\log\left(\frac{\theta }{u}\right)}{\log\left(\frac{\theta }{u}\right)^3} + O\left(\frac{\log\log\left(\frac{\theta }{u}\right)}{\log\left(\frac{\theta }{u}\right)^4}\right).
        \end{align*}

        Lastly, since $(1-z)^{-2} \geq 1+2z$ for $z \leq 1$, we can write
        \[-\frac{1}{\log\left(\frac{\theta }{u}\right)^2} = -\frac{1}{\log(u)^2}\frac{1}{\left(1- \frac{\log (\theta )}{\log(u)}\right)^2} \leq -\frac{1}{\log(u)^2}\left(1 + \frac{2\log(\theta )}{\log(u)}\right).\]
        Hence,
        \[\eta(u)(\eta''(u) + \chi A''(u)) \leq -\frac{1}{\log\left(u\right)^2} + \frac{2\log\log\left(\frac{\theta }{u}\right)}{\log\left(\frac{\theta }{u}\right)^3} + \frac{C}{\log\left(\frac{1}{u}\right)^3}.\]
    \end{proof}

    \section{Lower bound for $u$ in the pulled case} \label{lower_bound_section}
    Recall the notation
    \[\tilde u(t, x) = u\left(t, x + 2t - \frac{3}{2}\log t\right)\]
    In \cite[Proposition 3.1]{shortproof}, a lower bound of
    \begin{equation}
        \tilde u(t, x) \geq \kappa_{\sigma} xe^{-x} \label{hnrr_lower_bound}
    \end{equation}
    was proven for all $t \geq 1$ and $0\leq y \leq \sigma\sqrt t$.
    By a simple adjustment of the proof, we give a related lower bound that extends \eqref{hnrr_lower_bound} to the whole positive half line.
    \begin{lemma}\label{lowerbound_lemma}
        Suppose $u$ satisfies \eqref{initial_data_assumption} and $w_0 = -u_0'(x) - \eta(u_0) \geq 0$. Then there exists $C > 0$ so that for all $x \geq 0$,
        \begin{equation}
            \tilde u(t, x) \geq Cxe^{-x}e^{-\frac{x^2}{4t}}. \label{lower_bound_ineq}
        \end{equation}
    \end{lemma}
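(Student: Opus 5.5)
The plan is to adapt the subsolution argument of \cite[Proposition 3.1]{shortproof}: compare $u$ from below with the solution of a linearized Dirichlet problem placed ahead of the front. The one new ingredient is to keep the Gaussian factor instead of restricting to $x\le \sigma\sqrt t$.

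\emph{Step 1: reduce to an equation without advection.} Steepness propagates, so $w\ge 0$ for all time and hence $u_x\le 0$. Since $\chi A'(u)\ge 0$, the advective term $-\chi A'(u)u_x$ is nonnegative. Therefore $u$ is a supersolution of the pure reaction-diffusion equation $v_t=v_{xx}+v-A(v)$ with the same initial data. By comparison, $u\ge v$, where $v$ solves this Fisher-KPP-type equation. Note that $v-A(v)\ge v-Cv^2$ with $A(v)\le Cv^2$, because $A(0)=A'(0)=0$ and $A\in C^2$. Thus it suffices to prove \eqref{lower_bound_ineq} for $v$, or for any subsolution below $v$.

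\emph{Step 2: build the subsolution ahead of the front.} Work in the frame $y=x-2t+\frac32\log t$ and set $\bar v=e^{y}\tilde v$. As in \Cref{supersolution_lemma}, the linear part becomes the heat equation $\bar v_t=\bar v_{yy}-\frac{3}{2t}(\bar v_y-\bar v)$, plus the nonlinear loss $-e^{y}A(\tilde v)\ge -C e^{-y}\bar v^2$. Following \cite{shortproof}, take the subsolution
\[
\underline v(t,y)=\Big(1+\frac{1}{t^{1/4}}\Big)^{-1}\frac{\kappa\, y}{\sqrt{t}}\,e^{-y}e^{-\frac{y^2}{4t}}\,\Big(1-\text{small correction}\Big)
\]
on $y\ge 0$ with a Dirichlet condition at $y=0$, starting at a time $t_0\ge1$. (Equivalently, in the original variables $\underline v=\frac{\kappa(x-2t)}{t^{3/2}}e^{-(x-2t)}e^{-(x-2t)^2/4t}$, shifted by $\frac32\log t$.) This profile is an exact solution of the linearized Dirichlet problem. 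The time-dependent factor, or an extra $-\frac{K}{t^{1/4}}$ correction, must absorb two errors: the $O(1/t)$ drift term and the quadratic loss $A(\underline v)\le C\underline v^2$. Here $\underline v\le C\kappa t^{-1/2}\,y e^{-y}$ is uniformly small, so $\underline v^2/\underline v\lesssim \kappa t^{-1/2}$. This is integrable against the dissipation exactly as in \cite{shortproof}, provided $\kappa$ is small.

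\emph{Step 3: initialize and compare.} At time $t_0$, the function $v(t_0,\cdot)$ is positive and continuous. By \eqref{initial_data_assumption} it tends to $1$ as $x\to-\infty$. It also has Gaussian tails at $+\infty$ with prefactor bounded below, by comparison with the heat kernel, since $u_0\ge c\1_{x\le -R}$. On the other hand, $\underline v(t_0,\cdot)$ has the faster decay $e^{-y}e^{-y^2/4t_0}$. By choosing $\kappa$ small, we get $\underline v(t_0)\le v(t_0)$ for all $y\ge0$. The boundary $y=0$ is harmless because $\underline v=0$ there. Comparison then gives $\tilde u\ge\tilde v\ge\underline v\ge C x e^{-x}e^{-x^2/4t}$ for $t\ge t_0$ and $x\ge0$. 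For $t\in[1,t_0]$ the bound follows from the same heat-kernel lower bound after shrinking $C$.

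\emph{Main obstacle.} The delicate part is Step 3 at large $y$. There we must check that $v(t_0,\cdot)$ really dominates the Gaussian-weighted profile for all $y$ and not just on $[0,\sigma\sqrt t]$. Likewise we must check that the nonlinear error in Step 2 is controlled uniformly in $y$, and not only in the parabolic region. I would handle the first by an explicit heat-kernel lower bound $v(t_0,x)\ge c\,e^{t_0/2}\int_{-\infty}^{-R}G_{t_0}(x-z)\,dz$, using that $v-A(v)\ge v/2$ for small $v$. This gives a tail of the form $e^{-(x+R)^2/4t_0}$, which beats $e^{-y}e^{-y^2/4t_0}$ after adjusting the shift. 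The second is easy because $\underline v\le Ct^{-1/2}$ everywhere.
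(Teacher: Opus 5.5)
\textbf{The gap is in your Step 2.} Your Step 1 is correct and matches the paper, which compares $u$ with the solution of $v_t=v_{xx}+v-A(v)+2v_x$, using $u_x\le 0$.

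Write $\underline{\bar v}=e^{y}\underline v$ in the frame $y=x-2t+\frac32\log t$. A subsolution needs $R:=\underline{\bar v}_t-\underline{\bar v}_{yy}+\frac{3}{2t}(\underline{\bar v}_y-\underline{\bar v})+e^{y}A(e^{-y}\underline{\bar v})\le 0$. Your profile does not solve the linear part with a Dirichlet condition at $y=0$. Take $\underline{\bar v}=a(t)\kappa\,y\,t^{-1/2}e^{-y^2/4t}$, or the same with $t^{-1/2}$ replaced by $1$. Every term of $R$ vanishes at $y=0$ except the drift, so $R(t,0)=\frac{3}{2t}\underline{\bar v}_y(t,0)>0$, and $R>0$ for small $y>0$. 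A time-dependent multiplier contributes only $\frac{a'}{a}\underline{\bar v}$, which is zero at $y=0$, so it cannot absorb this residual. The unspecified ``small correction'' would have to do all the work.

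The nonlinear error is not handled either. The prefactor $t^{-1/2}$ contradicts your own ``original variables'' formula. If you kept it, the conclusion would be weaker than the statement by a factor $t^{-1/2}$. Moreover, a relative error of size $\kappa t^{-1/2}$ is not integrable in time: the multiplier solving $a'=-C\kappa a^2t^{-1/2}$ decays like $t^{-1/2}$. With the correct $O(1)$ prefactor the profile has size $\kappa ye^{-y}$. Then $A(\underline v)/\underline v$ is a localized potential of size $\kappa ye^{-y}$ that does not decay in $t$. Handling it, together with the drift, is what the argument of \cite{shortproof} is for, and that argument only gives the bound on $0\le y\le\sigma\sqrt t$.

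\textbf{The missing idea is the paper's two-region argument.} Put the Dirichlet boundary in the \emph{unshifted} frame $x-2t$, up to a fixed translation. There
$\tilde\phi(t,x)=\frac{e^{-x}}{\sqrt{4\pi t}}\int_0^\infty\big(e^{-(x-z)^2/4t}-e^{-(x+z)^2/4t}\big)\bar\phi_0(z)\,dz$
is an exact solution of the linearization. This gives you three things:
\begin{enumerate}
\item With $\bar\phi_0$ compactly supported, comparison at time $0$ is trivial. This removes the tail matching at time $t_0$ that you flag as your main obstacle.
\item Since $\norm{\tilde\phi(t)}_{L^\infty}\lesssim t^{-3/2}$, which is integrable in time, a multiplier $a(t)$ bounded above and below absorbs $A$.
\item The explicit kernel gives $\tilde\phi\ge c\,t^{-3/2}xe^{-x}e^{-x^2/4t}$ for all $x\ge 0$ and $t\ge1$, so the Gaussian comes for free.
\end{enumerate}
Shifting by $\frac32\log t$ cancels the factor $t^{-3/2}$, but the resulting bound holds only for $y\ge\frac32\log t+O(1)$. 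Your ``equivalently'' formula is exactly this object, and it is not equivalent to a Dirichlet condition at $y=0$. The remaining strip $0\le y\le\sigma\sqrt t$ is covered by \cite[Proposition 3.1]{shortproof}, since $e^{-y^2/4t}\ge e^{-\sigma^2/4}$ there. You try to replace that restricted bound rather than combine with it, and your replacement does not work.

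\textbf{The last sentence of your Step 3 is also false.} No bound of this form holds uniformly in $x$ near $t=1$. For instance, take $\chi=0$ and $u_0=\1_{\{x\le 0\}}$. Comparison with $z_t=z_{xx}+z$ gives $\tilde u(1,y)\le\frac{1}{\sqrt\pi\,(y+2)}e^{-y}e^{-y^2/4}$. So the estimate can only be asserted for $t\ge t_0$ large.
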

    \begin{proof}
        Suppose $\phi$ solves the linearized problem:
        \[\phi_t = \phi_{xx} + \phi.\]
        Then, $\bar \phi(t, x) = e^x \tilde \phi(t, x) = e^x\phi(t, x + 2t)$ solves
        \[\bar \phi = \bar \phi_{xx}.\]
        In addition, suppose that $\bar \phi$ has the Dirichlet boundary condition at $0$, so that
        \[\bar \phi(t, 0) = 0, \quad \text{for all } t > 0.\]
        Therefore, for an initial data $\bar \phi_0: \R_+ \to \R$, we have that
        \begin{equation}
            \tilde \phi(t, x) = \frac{e^{-x}}{\sqrt{4\pi t}}\int_0^\infty \left(e^{-\frac{(x-z)^2}{4t}} - e^{- \frac{(x+z)^2}{4t}}\right)\bar\phi_0(z)dz. \label{greens_solution}
        \end{equation}

        Since $w_0 \leq 0$, this means that $u_x \leq 0$ for all $t \geq 0$. Hence, the solution $v$ to the equation
        \begin{equation}
            v_t = v_{xx} + v - A(v) + 2v_x \label{subsolution_eqn}
        \end{equation}
        with initial data $u_0$, satisfies $v(t, x) \leq u(t, x+2t)$ by the parabolic comparison principle.
        Although, $\tilde \phi$ is not immediately a subsolution of \eqref{subsolution_eqn}, 
        following \cite{shortproof}, there exists a uniformly bounded function $a(t)$ so that for all $t \geq 0$, $0 < \theta  \leq a(t) \leq a_1 < \infty$, and
        $a(t) \tilde \phi(t, x)$ is a subsolution of \eqref{subsolution_eqn}.
        By the assumption \eqref{initial_data_assumption} on the initial data and since $u_0$ is monotonically decreasing, we
        can assume $u_0(x) > 0$ for $x < L_0$. Also, we can assume $L_0 \leq 0$ because if it were not the case, then
        we could consider $x < - L_0$.
        
        In particular, there exists $M > 0$ such that
        \begin{equation}\label{initial_data_lb}
            u_0(x) > M \qquad \text{ for } x < L_0 - 1. 
        \end{equation}
        
        Take a smooth initial data $\tilde \phi(0, x) \leq M$ satisfying
        \begin{equation}
            \tilde \phi(0, 0) = 0, \qquad \tilde \phi(0, x) = M \quad \text{ on } [1, 2], \qquad\text{ and } \tilde \phi(0, x) = 0 \quad \text{ on } [3, \infty). \label{chosen_data}
        \end{equation}
        Then by \eqref{initial_data_lb}, for $x \leq 3$, $u_0(x + (L_0 - 4)) > M$.
        In particular, this means $\tilde \phi(0, x) \leq u_0(x + (L_0 - 4))$ on $[0, \infty)$. In addition, for all $t \geq 0$, $\tilde \phi(t, 0) = 0 \leq v(t, L_0 - 4)$.
        Then by the comparison principle, we have, for $x \geq 0$,
        \[\frac{a(t)}{a_1}\tilde \phi(t, x) \leq v(t, x + (L_0 - 4)) \leq u(t, x + 2t + (L_0 - 4)).\]

        Now we show that \eqref{lower_bound_ineq} holds. Note that \eqref{greens_solution} can be written as
        \[\tilde \phi(t, x) = \frac{e^{-x}e^{-\frac{x^2}{4t}}}{\sqrt{4\pi t}} \int_0^\infty e^{-\frac{z^2}{4t}}\left(e^{\frac{xz}{2t}} - e^{-\frac{xz}{2t}}\right)\bar\phi_0(z)dz.\]
        By the Mean Value Theorem, there exists a $\xi \in \left(0, \frac{xz}{2t}\right)$, so that
        \[e^{\frac{xz}{2t}} = 1 + \frac{xz}{2t}e^\xi \geq 1 + \frac{xz}{2t}.\]
        Hence,
        \begin{align*}
           \tilde \phi(t, x) \geq \frac{e^{-x}e^{-\frac{x^2}{4t}}}{\sqrt{4\pi t}} \int_0^\infty e^{-\frac{z^2}{4t}}\left(1 + \frac{xz}{2t} - e^{-\frac{xz}{2t}}\right)\bar\phi_0(z)dz \geq \frac{xe^{-x}e^{-\frac{x^2}{4t}}}{2t\sqrt{4\pi t}} \int_0^\infty ze^{-\frac{z^2}{4t}}\bar \phi_0(z)dz.
        \end{align*}
        In the second inequality we used that $x, z \geq 0$, so
        \[1-e^{-\frac{xz}{2t}} \geq 0.\]
        The integrand is increasing with respect to $t$, so, for all $t \geq 1$,
        \[\tilde \phi(t, x) \geq \frac{xe^{-x}e^{-\frac{x^2}{4t}}}{2t\sqrt{4\pi t}} \int_0^\infty ze^{-\frac{z^2}{4}}\bar \phi_0(z)dz.\]
        Since $\bar\phi_0(z) = e^z \tilde \phi(0, z)$ and using \eqref{chosen_data}, we have
        \[\tilde \phi(t, x) \geq \frac{xe^{-x}e^{-\frac{x^2}{4t}}}{2t\sqrt{4\pi t}} \int_{1}^{2} ze^{-\frac{z^2}{4}} Me^z dz = \frac{Cxe^{-x}e^{-\frac{x^2}{4t}}}{t^{3/2}}.\]
        Thus, for $x \geq L_0 - 4 + \frac{3}{2}\log t$,
        \begin{equation}\label{shifted_lowerbound}
            \begin{split}
                \tilde u(t, x) \geq \frac{\theta }{a_1}\tilde \phi\left(t, x -(L_0-4) - \frac{3}{2}\log t\right) &\geq C\left(x - \frac{3}{2}\log t -(L_0-4)\right)e^{L_0-4}e^{-x}e^{-\frac{(x- \frac{3}{2}\log t -(L_0-4))^2}{4t}}\\
                &\geq C\left(x - \frac{3}{2}\log t\right)e^{-x}e^{-\frac{(x- \frac{3}{2}\log t)^2}{4t}}.
            \end{split}
        \end{equation}
        In the last inequality, we used that $L_0 \leq 0$.
        Combining \eqref{hnrr_lower_bound} with \eqref{shifted_lowerbound} implies that, for all $t \geq 1$ and all $x \geq 0$,
        \[\tilde u(t, x) \geq Cxe^{-x}e^{-\frac{x^2}{4t}}.\]
    \end{proof}

    \section{Derivation of the shape defect function PDE} \label{derivation}
    Recall the definition \eqref{shape_defect_function} of the shape defect function.
    In the following calculations, we repeatedly use
    the following identity relating $u, A(u), A'(u), \eta(u)$ and $\eta'(u)$ that was shown in \eqref{rcl_pulled_identity}:
    \begin{equation}\label{key_identity}
        u-A(u) = \eta(u)(2-\eta'(u)) - \chi A'(u)\eta(u).
    \end{equation}
    With \eqref{key_identity} in mind along with the PDE \eqref{main_eq} for $u$, we compute
    \begin{align*}
        w_t - w_{xx} &= (-u_x-\eta(u))_t +(u_x +\eta(u))_{xx} = -u_{tx} - \eta'(u)u_t + u_{xxx} + \eta''(u)u_x^2 + \eta'(u)u_{xx}\\
        &= -(u_{xx}-\chi A'(u)u_x + u-A(u))_x\\
        &\hspace*{0.75cm} - \eta'(u)(u_{xx}-\chi A'(u)u_x + u-A(u)) + u_{xxx} + \eta''(u)u_x^2 + \eta'(u)u_{xx}\\
        &= \chi A''(u)u_x^2 + \chi A'(u)u_{xx} -(\eta(u)(2-\eta'(u)) -\chi A'(u)\eta(u))_x\\
        &\hspace*{0.75cm} +\chi A'(u)\eta'(u)u_x -\eta'(u)\eta(u)(2-\eta'(u)) +\chi A'(u)\eta'(u)\eta(u) + \eta''(u)u_x^2\\
        &= \chi A''(u)u_x^2 + \chi A'(u)u_{xx} - \eta'(u)(2-\eta'(u))u_x + \eta(u)\eta''(u)u_x\\
        &\hspace*{0.75cm} + \chi A''(u)\eta(u)u_x + \chi A'(u)\eta'(u)u_x\\
        &\hspace*{0.75cm} +\chi A'(u)\eta'(u)u_x -\eta'(u)\eta(u)(2-\eta'(u)) +\chi A'(u)\eta'(u)\eta(u) + \eta''(u)u_x^2.\\
    \end{align*}
    Note that we can write
    \[\chi A'(u)u_{xx} + \chi A'(u)\eta'(u)u_x = \chi A'(u) (u_{xx} + \eta'(u)u_x) = -\chi A'(u)w_x.\]
    Similarly,
    \begin{align*}
        -\eta'(u)(2-\eta'(u))u_x - \eta'(u)\eta(u)(2-\eta'(u)) &= -\eta'(u)(2-\eta'(u))(u_x + \eta(u))= \eta'(u)(2-\eta'(u))w.
    \end{align*}
    Therefore,
    \begin{align*}
        w_t - w_{xx} &= - \chi A'(u)w_x + \eta'(u)(2-\eta'(u))w + \chi A''(u)u_x^2\\
        &\hspace*{0.75cm}+ \eta(u)\eta''(u)u_x + \chi A''(u)\eta(u)u_x + \chi A'(u)\eta'(u)u_x\\
        &\hspace*{0.75cm}+ \chi A'(u)\eta'(u)\eta(u)+\eta''(u)u_x^2.
    \end{align*}
    In addition, we have
    \begin{align*}
        \eta(u)\eta''(u)u_x + \eta''(u)u_x^2 = \eta''(u)u_x (u_x + \eta(u)) = -\eta''(u)u_x w = -\eta''(u) w(-w - \eta(u)) = w\eta''(u)(w + \eta(u)),
    \end{align*}
    along with
    \begin{align*}
        \chi A''(u) u_x^2 + \chi A''(u)\eta(u)u_x = \chi A''(u)u_x (u_x + \eta(u)) = \chi w A''(u)(w+\eta(u)),
    \end{align*}
    and
    \begin{align*}
        \chi A'(u)\eta'(u)u_x + \chi A'(u)\eta'(u)\eta(u) = \chi A'(u)\eta'(u)(u_x + \eta(u)) = -\chi A'(u) \eta'(u)w.
    \end{align*}
    Combining the previous calculations, we obtain
    \begin{align*}
        w_t - w_{xx} + \chi A'(u) w_x = \eta'(u)(2-\eta'(u))w + w\eta''(u)(w+\eta(u)) + \chi w A''(u)(w+\eta(u)) - \chi A'(u)\eta'(u)w.
    \end{align*}
    Therefore,
    \begin{equation}
        w_t = w_{xx} - \chi A'(u)w_x + w\eta'(u)[2-\eta'(u) - \chi A'(u)] + w(w+\eta(u))(\eta''(u) + \chi A''(u)).
    \end{equation}
    We point out that, by \eqref{key_identity}, one finds
    \begin{equation}\label{rearranged_identity}
        \begin{split}
            \eta''(u) + \chi A''(u) &= \left(\frac{A(u) - u}{\eta(u)}\right)' = \frac{A'(u)-1}{\eta(u)} - \frac{\eta'(u)(A(u)-u)}{\eta(u)^2}\\
        &= \frac{A'(u)-1}{\eta(u)} - \frac{\eta'(u)(\chi A'(u) + \eta'(u)-2)}{\eta(u)}.
        \end{split}
    \end{equation}
    Rearranging \eqref{rearranged_identity} to solve for $\eta'(u)(2-\eta'(u) - \chi A'(u))$, we obtain
    \[w_t = w_{xx} - \chi A'(u)w_x + (1 - A'(\tilde u))w + 2w(w+\eta(u))(\eta''(u) + \chi A''(u)).\]

    In the special case when $\chi = 1$, the traveling wave profile function, $\eta$, is explicitly given by
    \[\eta(u) = u-A(u).\]
    Noting that $\eta''(u) = -A''(u)$, this allows us to simplify the equation for $w$ to
    \[w_t = w_{xx} - A'(u)w_x + (1-A'(u))w.\]

\bibliographystyle{plain}
\bibliography{references}

\end{document}